\DeclareMathOperator{\lk}{lk} \DeclareMathOperator{\cone}{Cone}
 \DeclareMathOperator{\Ker}{Ker}
\DeclareMathOperator{\rk}{rk} \DeclareMathOperator{\sgn}{sgn}
\DeclareMathOperator{\im}{Im} 
 \DeclareMathOperator{\ver}{Vert}
 \DeclareMathOperator{\soc}{Soc}
\DeclareMathOperator{\Dh}{DH} \DeclareMathOperator{\WN}{WN}
\DeclareMathOperator{\vol}{Vol} \DeclareMathOperator{\Poly}{Poly}
\DeclareMathOperator{\Ann}{Ann} 
\DeclareMathOperator{\PD}{PD} \DeclareMathOperator{\covol}{covol}
\DeclareMathOperator{\const}{const} \DeclareMathOperator{\GL}{GL}
\DeclareMathOperator{\Flip}{Flip}
\DeclareMathOperator{\Cone}{Cone}
\DeclareMathOperator{\MultiFans}{MultiFans}
\DeclareMathOperator{\CharFunc}{CharFunc}
\DeclareMathOperator{\ch}{char} \DeclareMathOperator{\Mink}{Mink}
\DeclareMathOperator{\dep}{dep} \DeclareMathOperator{\Hilb}{Hilb}
 \DeclareMathOperator{\Hom}{Hom}
\DeclareMathOperator{\proj}{proj}
\newcommand{\Zo}{\mathbb{Z}}
\newcommand{\Ro}{\mathbb{R}}
\newcommand{\Zg}{\mathbb{Z}_{\geqslant 0}}
\newcommand{\Co}{\mathbb{C}}
\newcommand{\Qo}{\mathbb{Q}}
\newcommand{\ko}{\Bbbk}
\newcommand{\indn}{^{\langle n\rangle}}
\newcommand{\dm}{\overline{dm}}
\newcommand{\br}{\widetilde{\beta}}
\newcommand{\ca}[1]{\mathcal{#1}}
\newcommand{\inc}[2]{{[#1\!:\!#2]}}
\newcommand{\I}{\mathcal{I}}
\newcommand{\D}{\mathcal{D}}
\newcommand{\A}{\mathcal{A}}
\newcommand{\F}{\mathcal{F}}
\newcommand{\Matr}{Matr}
\newcommand{\GMP}{G\mathcal{MP}}
\newcommand{\Hr}{\widetilde{H}}
\newcommand{\dd}{\partial}
\newcounter{stmcounter}[section]
\numberwithin{equation}{section}
\theoremstyle{plain}
\newtheorem{cor}[stmcounter]{Corollary}
\newtheorem{thm}[stmcounter]{Theorem}
\newtheorem*{thmNo}{Theorem}
\newtheorem{conj}[stmcounter]{Conjecture}
\newtheorem{prop}[stmcounter]{Proposition}
\newtheorem{lemma}[stmcounter]{Lemma}
\newtheorem{problem}[stmcounter]{Problem}
\newtheorem{que}[stmcounter]{Question}
\newtheorem{claim}[stmcounter]{Claim}
\theoremstyle{definition}
\newtheorem{defin}[stmcounter]{Definition}
\theoremstyle{remark}
\newtheorem{ex}[stmcounter]{Example}
\newtheorem{rem}[stmcounter]{Remark}
\begin{document}

\title{Volume polynomials and duality algebras of multi-fans}

\author[A. Ayzenberg]{Anton Ayzenberg}
\address{Department of Mathematics, Osaka City University, Sumiyoshi-ku, Osaka 558-8585, Japan.}
\email{ayzenberga@gmail.com}

\author[M. Masuda]{Mikiya Masuda}
\address{Department of Mathematics, Osaka City University, Sumiyoshi-ku, Osaka 558-8585, Japan.}
\email{masuda@sci.osaka-cu.ac.jp}


\subjclass[2010]{Primary 52A39, 52B11, 05E45, 52C35; Secondary
05E40, 13H10, 52B05, 52B40, 52B70, 57N65, 55N91, 28A75, 51M25,
13A02 } \keywords{multi-fan, multi-polytope, volume polynomial,
Poincare duality algebra, Macaulay duality, Stanley--Reisner ring,
Minkowski theorem, Minkowski relations, cohomology of torus
manifolds}

\begin{abstract}
We introduce a theory of volume polynomials and corresponding
duality algebras of multi-fans. Any complete simplicial multi-fan
$\Delta$ determines a volume polynomial $V_\Delta$ whose values
are the volumes of multi-polytopes based on $\Delta$. This
homogeneous polynomial is further used to construct a Poincare
duality algebra $\A^*(\Delta)$. We study the structure and
properties of $V_\Delta$ and $\A^*(\Delta)$ and give applications
and connections to other subjects, such as Macaulay duality,
Novik--Swartz theory of face rings of simplicial manifolds,
generalizations of Minkowski's theorem on convex polytopes,
cohomology of torus manifolds, computations of volumes, and linear
relations on the powers of linear forms. In particular, we prove
that the analogue of the $g$-theorem does not hold for
multi-polytopes.
\end{abstract}

\maketitle

%
%
%
%
%
%

\tableofcontents

\section{Introduction}

There is a fundamental correspondence in algebraic geometry
\cite{Ful}:
\[
\left\{\parbox[c]{4cm}{\begin{center}Toric varieties \\of complex
dimension
$n$\end{center}}\right\}\leftrightsquigarrow\left\{\mbox{Rational
fans in $\Ro^n$}\right\}.
\]
One can read the information about toric variety from its fan.
Complete toric varieties correspond to complete fans, non-singular
varieties correspond to non-singular fans, and projective toric
varieties correspond to normal fans of convex polytopes.
Combinatorics of a fan and geometry of a toric variety are closely
connected. In particular, the rays of a fan correspond to the
divisors on toric variety and higher dimensional cones correspond
to the intersections of divisors.

In the work \cite{HM} Hattori and the second named author expanded
this setting to topological category and generalized the
above-mentioned correspondence in the following way:
\begin{equation}\label{eqMasudaHattoriCorresp}
\left\{\parbox[c]{3.5cm}{\begin{center}Torus manifolds\\ of real
dimension
$2n$\end{center}}\right\}\rightsquigarrow\left\{\mbox{Nonsingular
multi-fans in $\Ro^n$}\right\},
\end{equation}
which will be explained in a minute.

Let $X$ be a smooth closed oriented $2n$-manifold with an
effective action of an $n$-dimen\-si\-on\-al compact torus $T$ and
at least one fixed point. A closed, connected, codimension two
submanifold of $X$ will be called characteristic if it is a
connected component of the fixed point set of a certain circle
subgroup $S$ of $T$, and if it contains at least one $T$-fixed
point. The manifold $X$ together with a preferred orientation of
each characteristic submanifold is called a torus manifold.
Characteristic submanifolds are the analogues of divisors on a
toric variety.

Note, that there is no one-to-one correspondence in
\eqref{eqMasudaHattoriCorresp}: there may be different (in any
sense) torus manifolds producing the same multi-fan. Nevertheless,
multi-fans provide a convenient tool to study such manifolds.

A multi-fan is the central object of this paper. We recall the
precise definition later. Informally, a multi-fan is a collection
of cones in $V\cong\Ro^n$ with apex at the origin, coming with
multiplicities and satisfying certain geometrical restrictions.
Sometimes it is convenient to assume that there is a fixed lattice
$N\subset V$, and the rays of $\Delta$ are rational with respect
to $N$. The cones of a multi-fan may overlap nontrivially, which
makes a multi-fan more general and flexible object than an
ordinary fan, and provides many nontrivial examples.

A multi-polytope is defined as follows. Let $\Delta$ be a
simplicial multi-fan in $V\cong\Ro^n$. For each ray $l_i\in
\Delta$, we specify an affine hyperplane $H_i\subset V^*$
orthogonal to the linear span of $l_i$. A tuple
$P=(\Delta,H_1,\ldots,H_m)$ is called a simple multi-polytope
based on $\Delta$. The relation of the multi-polytope to the
multi-fan on which it is based, is exactly the same as the
relation of a polytope to its normal fan.

For any multi-polytope $P\subset V^*$ there is a function
$\Dh_P\colon V^*\setminus\bigcup H_i\to \Zo$ (the notation stands
for Duistermaat--Heckman, see \cite{HM}). Informally, for a
generic point $x\in V^*$ the value $\Dh_P(x)$ indicates how many
times the ``boundary'' of $P$ wraps around $x$. The precise
definition is given in Section \ref{secDefMultipoly}. For an
ordinary simple convex polytope this function takes value $1$
inside the polytope, and $0$ outside.

A multi-fan $\Delta$ is called complete if it satisfies certain
mild conditions (see \cite{HM} or Definition
\ref{defCompleteMultifan} below). For multi-polytopes based on
complete simplicial multi-fans, the function $\Dh_P$ is compactly
supported. We can define the volume of a multi-polytope $P$ as an
integral
\[
\vol(P):=\int_{V^*}\Dh_P d\mu
\]
(the measure $\mu$ is chosen such that the volume of a fundamental
domain of the dual lattice $N^*$ is $1$).

For a given simplicial multi-fan $\Delta$ consider the space
$\Poly(\Delta)$ of all multi-polytopes based on $\Delta$.
Following \cite{Tim} we call it the space of analogous
multi-polytopes. To specify an affine hyperplane orthogonal to a
line $\langle l_i\rangle\subset V$ one needs a single real number
$c_i$, the normalized distance from $H_i$ to the origin taken with
sign. This number is called the support parameter. Thus the space
$\Poly(\Delta)$ is isomorphic to $\Ro^m$, where $m$ is the number
of rays of $\Delta$. Support parameters $(c_1,\ldots,c_m)$ provide
the canonical coordinates on $\Poly(\Delta)$.

If $\Delta$ is complete, the volume gives a function on the space
of analogous polytopes: $\Poly(\Delta)\to \Ro$, $P\mapsto
\vol(P)$. Similarly to the case of actual convex polytopes,
studied by Pukhlikov--Khovanskii \cite{PKh} and Timorin
\cite{Tim}, this function is a homogeneous polynomial in the
support parameters.

\begin{thm}[\cite{HM}]\label{thmVolPoly}
Let $\Delta$ be a complete simplicial multi-fan in $\Ro^n$ with
$m$ rays. There exists a homogeneous polynomial $V_{\Delta}\in
\Ro[c_1,\ldots,c_m]$ of degree $n$ such that
$V_{\Delta}(c_1,\ldots,c_m)=\vol(P)$ for a multi-polytope $P\in
\Poly(\Delta)$ with support parameters $(c_1,\ldots,c_m)$.
\end{thm}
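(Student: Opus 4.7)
The plan is to reduce the computation of $\vol(P)$ to a signed sum over the vertices of $P$, each of which is a linear function of the support parameters. For each maximal cone $I = \{i_1,\dots,i_n\} \in \Delta^{(n)}$, let $u_{i_k}$ be the primitive generator of the ray $l_{i_k}$. Since these generators form a basis of $V$, the linear system $\langle v, u_{i_k}\rangle = c_{i_k}$ for $k = 1,\dots,n$ has a unique solution $v_I \in V^*$, and Cramer's rule shows that $v_I$ depends linearly on $(c_{i_1},\dots,c_{i_n})$ with coefficients depending only on $\Delta$. These $v_I$ play the role of the vertices of $P$.

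Next I would invoke a Brion-type decomposition of $\Dh_P$ as a signed sum of characteristic functions of shifted tangent cones,
\[
\Dh_P = \sum_{I\in\Delta^{(n)}} \sigma_I\, \chi_{v_I + C_I},
\]
where $\sigma_I = \pm 1$ is the sign attached to $I$ and $C_I$ is the simplicial cone at $v_I$ spanned by the edge directions along the facets incident to $I$. Completeness of $\Delta$ is precisely what makes the unbounded tails cancel, leaving the compactly supported $\Dh_P$. Pairing this identity against a damping exponential $e^{\langle\xi,x\rangle}$ for generic $\xi\in V$ in the common cone of convergence and applying the elementary formula for the exponential integral over a simplicial cone yields
\[
\int_{V^*}\Dh_P(x)\,e^{\langle\xi,x\rangle}\,d\mu(x) \;=\; \sum_{I\in\Delta^{(n)}} \sigma_I \,\frac{e^{\langle\xi,v_I\rangle}}{\prod_{j\in I}\langle\xi,u_j\rangle}.
\]
Because $\Dh_P$ has compact support, the left-hand side is entire in $\xi$, so the apparent poles on the right at $\xi=0$ must cancel. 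Expanding $e^{\langle\xi,v_I\rangle}$ in its Taylor series and extracting the piece homogeneous of degree $0$ in $\xi$ (the singular pieces vanish by the pole cancellation, and the pieces of positive degree vanish at $\xi = 0$) produces
\[
\vol(P) \;=\; \frac{1}{n!}\sum_{I\in\Delta^{(n)}} \sigma_I\,\frac{\langle\xi,v_I\rangle^n}{\prod_{j\in I}\langle\xi,u_j\rangle}
\]
for every generic $\xi\in V$. Each $v_I$ is linear in the $c_i$'s while the denominators are independent of them, so once the $\xi$-poles have cancelled the right-hand side is manifestly a polynomial in $(c_1,\dots,c_m)$ of degree exactly $n$, giving the desired $V_\Delta$.

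The hard part is establishing the vertex decomposition of $\Dh_P$ together with the accompanying pole cancellation in $\xi$: these are the multi-fan analogues of the Lawrence--Varchenko/Brion localization for ordinary convex polytopes, and it is precisely here that the completeness of $\Delta$ is used in an essential way. The cancellation is equivalent to a family of Minkowski-type linear relations on $\sigma_I$ and $u_j$ that characterize complete multi-fans. An alternative route is induction on $n$ via the facet differentiation identity $\partial_{c_i}\vol(P) = \vol(F_i)$, with $F_i$ the multi-polytope based on $\lk_\Delta(l_i)$, but even this approach requires a direct analysis of $\Dh_P$ to establish smoothness of $\vol$ in the support parameters across walls of the arrangement $\bigcup_i H_i$, so a vertex-localization argument in some form is essentially unavoidable.
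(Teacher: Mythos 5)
Your route is genuinely different from the paper's. The paper proves Theorem \ref{thmVolPoly} by first constructing the index map $\pi_!^\Delta$ (an algebraic localization sum over the maximal simplices), showing its image is polynomial exactly when $\Delta$ is complete, and then establishing $\vol(P)=\frac{1}{n!}\int_\Delta c_1(P)^n$ (Proposition \ref{propVolFormula}) by reduction to the lattice case proved in \cite{HM}, followed by rescaling and continuity arguments to cover rational and then arbitrary real data; polynomiality in the $c_i$ is then immediate from the shape of $\int_\Delta(c_1x_1+\cdots+c_mx_m)^n$. You instead run a Brion/Lawrence--Varchenko exponential-integral localization at the vertices, which, once the poles in $\xi$ cancel, directly produces the paper's later Theorem \ref{thmVolPolyFormula} (Lawrence's formula) and deduces Theorem \ref{thmVolPoly} from it; the paper goes in the opposite order, deriving that formula afterwards by writing $w_{ch}$ as the boundary of a cone and decomposing $\Delta$ into elementary multi-fans (Lemma \ref{lemElementaryMultifan}). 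Your approach buys a self-contained, explicitly computable formula; the paper's buys the ring-theoretic package ($H^*(\Delta)$, $\int_\Delta$, the $\Ro[V^*]$-module structure) that the rest of the paper needs anyway. One remark in your favor: the ``hard part'' you defer --- the signed vertex decomposition of $\Dh_P$ --- is in the multi-fan setting essentially the \emph{definition} of $\Dh_P$ (with the polarization by a generic vector $v$ built in), so what actually requires proof is independence of the polarization and compact support under completeness, which is exactly what this paper also imports from \cite{HM}.

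Two details to fix when writing this up. The edge directions of $P$ at the vertex $H_I$ are the dual basis vectors $u_j^I$ (with $\langle u_j^I,\lambda(i)\rangle=\delta_{ij}$), not the ray generators, so the denominator of the exponential integral over the tangent cone at $H_I$ is $\prod_{j\in I}\langle \xi,u_j^I\rangle$ (the coordinates of $\xi$ in the basis $\{\lambda(j)\}_{j\in I}$) together with a Jacobian factor $1/|\det\lambda_I|$, and your sign $\sigma_I$ must be replaced by the real weight $w(I)$; with these corrections your final formula agrees with \eqref{eqVolPolyFormula}. Also ``degree exactly $n$'' should read ``homogeneous of degree $n$,'' since the polynomial may vanish identically (e.g.\ for the zero multi-fan).
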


Following Timorin's approach \cite{Tim}, we proceed as follows.
Consider the ring $\D$ of differential operators with constant
coefficients, acting on $\Ro[c_1,\ldots,c_m]$. We have
$\D=\Ro[\dd_1,\ldots,\dd_m]$, where $\dd_i=\frac{\dd}{\dd c_i}$.
It is convenient to double the degree, so we assume that
$\deg\dd_i=2$. Given any nonzero homogeneous polynomial $\Psi\in
\Ro[c_1,\ldots,c_m]$ of degree $n$, consider the subspace
$\Ann(\Psi)\subset \D$, $\Ann(\Psi)=\{D\in \D\mid D\Psi=0\}$. It
is easily seen, that $\Ann(\Psi)$ is a graded ideal, and the
quotient algebra $\D/\Ann(\Psi)$ is finite-dimensional and
vanishes in degrees $>2n$. Moreover, $\D/\Ann(\Psi)$ is a
commutative Poincare duality algebra of formal dimension $2n$
\cite[Prop.2.5.1]{Tim}.

Now consider a complete simplicial multi-fan $\Delta$ and apply
this construction to the volume polynomial $V_{\Delta}$. In result
we obtain a Poincare duality algebra $\A^*(\Delta):=
\D/\Ann(V_{\Delta})$ associated with a multi-fan $\Delta$. The
main goal of this work is to study the volume polynomials and
investigate the structure of the corresponding algebras and to
show their relation to other topics in combinatorics, convex
geometry, commutative algebra, and topology.

The work has the following structure. In Sections
\ref{secDefMultifans} and \ref{secDefMultipoly} we review the
basic notions of the theory of multi-fans and in Section
\ref{secVolPolyIndexMap} we review the notion of the index map
which is the key ingredient in the construction of the volume
polynomial. In the work \cite{HM}, introducing multi-fans, the
existence of a lattice $N\cong\Zo^n\subset V$ was assumed, so that
multi-fans are non-singular (or at least rational) with respect to
this lattice. In our paper we consider general multi-fans,
probably non-rational. Instead of a lattice we assume that the
ambient space $V$ has a fixed inner product. This allows, in
particular, to define and compute volumes of multi-polytopes in
$V^*=V$ of dimensions smaller than $n$ (dealing with lattices,
only unimodular volumes make sense). The exposition of the
multi-fan theory is built to comply with this continuous setting.
Nevertheless, all statements in the introductory sections follow
from their lattice analogues discussed in \cite{HM}.

In Section \ref{secBasicPropVolPoly} we prove the basic
enumerative properties of the volume polynomial. While the values
of $V_\Delta$ are the volumes of multi-polytopes, the values of
its partial derivatives are the volumes of proper faces of these
multi-polytopes up to certain constants. These relations will be
used further in Section \ref{secMinkRels}.

In Section \ref{secFormulaVolPoly} we prove a general formula
(actually, a family of formulas) for the volume polynomial, and
indicate a geometrical procedure which allows to find non-trivial
linear identities on the powers of linear forms. For actual convex
polytopes our formula coincides with the Lawrence's formula
\cite{Law}, which is well known in computational geometry.

In Section \ref{secPDAmultifan} we review the general
correspondence between homogeneous polynomials and Poincare
duality algebras, known as the Macaulay duality. Using this
correspondence we obtain an algebra $\A^*(\Delta)$ as a Poincare
duality algebra corresponding to the volume polynomial $V_\Delta$.
One way to obtain this algebra is via differential operators as in
Timorin's approach. Another way involves the index map of a
multi-fan.

The structure of multi-fan algebras in some particular cases is
described in Section \ref{secStructMFanAlg}. Every (complete
simplicial) multi-fan has an underlying simplicial cycle. If this
cycle is a homology sphere $K$, then $\A^*(\Delta)$ is the
quotient of Stanley--Reisner algebra of $K$ by a linear system of
parameters, and the dimensions of its graded components are the
$h$-numbers of $K$. This is similar to ordinary fans. If the
underlying simplicial cycle is a homology manifold, the algebra
$\A^*(\Delta)$ is the quotient of the Stanley--Reisner algebra by
the linear system of parameters and by the certain ideal
introduced and studied by Novik--Swartz \cite{NS,NSgor}. In this
case the dimensions of the graded components of $\A^*(\Delta)$ are
the $h''$-numbers of $K$. A short exposition of the Novik--Swartz
theory is provided.

Section \ref{secMinkRels} aims to generalize a classical Minkowski
theorem on convex polytopes to multi-polytopes. The direct
Minkowski theorem has a straightforward generalization which can
be used to obtain linear relations in the algebra $\A^*(\Delta)$.
On the other hand, the inverse Minkowski theorem, properly
formulated, is controlled by the power map
$\A^2(\Delta)\to\A^{2n-2}(\Delta)$, $a\mapsto a^{n-1}$.

In Section \ref{secRecognizing} we answer the question which
polynomials are volume polynomials of multi-fans, and which
Poincare duality algebras are algebras of multi-fans. We prove
that every Poincare duality algebra generated in degree $2$ is
isomorphic to $\A^*(\Delta)$ for some complete simplicial
multi-fan $\Delta$.

The basic operations on multi-fans, such as flips and connected
sums, and their effects to multi-fan algebras are described in
Section \ref{secSurgery}. In particular, we prove that, under
flips, the dimensions of graded components of $\A^*(\Delta)$
change similarly to $h$-numbers of simplicial complexes.

Finally, in Section \ref{secTorusManif} we discuss the relation of
$\A^*(\Delta)$ to the cohomology of torus manifolds. It is known
that, for complete smooth toric variety $X$, the cohomology ring
$H^*(X;\Ro)$ coincides with the algebra  $\A^*(\Delta_X)$ of the
corresponding fan. Situation with general torus manifolds and
their multi-fans is more complicated. Nevertheless, in a certain
sense, the multi-fan algebra $\A^*(\Delta_X)$ gives a ``lower
bound'' for the cohomology of a torus manifold.

%
%
%
%
%
%

\section{Definitions: multi-fans}\label{secDefMultifans}

%
%

\subsection{Multi-fans as parametrized collections of
cones}\label{subsecMFansBasics}

Let us recall the definition and basic properties of multi-fans.
This exposition follows the lines of~\cite{HM}.

Consider an oriented vector space $V\cong \Ro^n$ with a lattice
$N\subset V$, $N\cong \Zo^n$. A subset of the form
$\kappa=\{r_1v_1+\cdots+r_kv_k\mid r_i\geqslant 0\}$ for given
$v_1,\ldots,v_k\in V$ is called a cone in $V$. Dimension of
$\kappa$ is the dimension of the linear hull of $\kappa$. A cone
is called strongly convex if it contains no line through the
origin. In the following all cones are assumed strongly convex.

Using classical construction of supporting hyperplane one can
define the faces of $\kappa$, which are also the cones of smaller
dimensions. If the generating set $v_1,\ldots,v_k$ may be chosen
linearly independent (resp. rational, the part of basis of the
lattice $N$), $\kappa$ is called simplicial (resp. rational,
unimodular). Let $\Cone(V)$ denote the set of all cones in $V$.
This set obtains a partial order: $\kappa_1\prec \kappa_2$
whenever $\kappa_1$ is a face of $\kappa_2$.

Let $\Sigma$ be a finite partially ordered set with the minimal
element $*$. Suppose there is a map $C\colon\Sigma\to\Cone(V)$
such that

\begin{enumerate}
\item $C(*)=\{0\}$;

\item If $I<J$ for $I,J\in S$, then $C(I)\prec C(J)$;

\item For any $J\in \Sigma$ the map $C$ restricted on $\{I\in S\mid I\leqslant J\}$
is an isomorphism of ordered sets onto $\{\kappa\in \Cone(V)\mid
\kappa\preceq C(J)\}$.
\end{enumerate}

The image $C(\Sigma)$ is a finite set of cones in $V$. We may
think of a pair $(\Sigma, C)$ as a set of cones in $V$ labeled by
the ordered set $\Sigma$.

The poset $\Sigma$ obtains a rank function: $\rk(I):= \dim C(I)$.
The set of elements in $\Sigma$ having maximal rank $n$ is denoted
$\Sigma\indn$.


Consider an arbitrary function $\sigma\colon \Sigma\indn\to
\{-1,+1\}$ called a sign function.

\begin{defin}[Old definition]\label{defMultifanOld}
The triple $\Delta:=(\Sigma,C,\sigma)$ is called \emph{a
multi-fan} in $V$. The number $n=\dim V$ is called the dimension
of $\Delta$.
\end{defin}

Multi-fan $\Delta$ is called simplicial (resp. rational,
non-singular) if the values of $C$ are simplicial (resp. rational,
unimodular) cones. In the following we will always assume that
$\Delta$ is simplicial. Then every cone of $\Delta$ is simplicial
and property (3) of the map $C$ implies that $\Sigma$ is a
simplicial poset. Recall that a poset $\Sigma$ is called
simplicial if any lower order ideal $S_{\leqslant J}:=\{I\in S\mid
I\leqslant J\}$ is isomorphic to the poset of faces of a simplex
(i.e. a boolean lattice).

%
%

\subsection{Multi-fans as pairs of weight and characteristic
functions}\label{subsecMFansWeightCharFunc}

Note that definition \ref{defMultifanOld} of a multi-fan slightly
differs from the definition of multi-fan given in \cite{HM}. To
establish the correspondence consider the following construction.
Let $[m]=\{1,\ldots,m\}$ denote the set of vertices of $\Sigma$.

%

The signs of maximal simplices in $\Sigma$ determine two functions
on ${[m]\choose n}$, the set of all $n$-subsets of $[m]$:
\[
w^\pm\colon {[m]\choose n}\to \Zg,
\]
where $w^+(\{i_1,\ldots,i_n\})$ (resp. $w^-(\{i_1,\ldots,i_n\})$)
equals the number of simplices $I\in \Sigma\indn$ on the vertices
$\{i_1,\ldots,i_n\}$ having sign $+1$ (resp. $-1$). Although both
functions $w^+,w^-$ are important by topological reasons (see
\cite{HM}), only their difference $w:= w^+-w^-$ is relevant to our
work. So far $w$ is a function which assigns an integral number to
each $n$-subset of $[m]$. Let us consider a pure simplicial
complex $K$ on the set $[m]$ whose maximal simplices $K\indn$ are
the subsets $I\subset [m]$ satisfying $w(I)\neq 0$. To reach
greater generality we allow $w$ to take real values, thus
\[
w\colon {[m]\choose n}\to \Ro.
\]

Each vertex $i\in[m]$ corresponds to a ray (i.e. $1$-dimensional
cone) of $\Delta$. We choose a generator in each ray. This gives a
so called characteristic map $\lambda\colon [m]\to V$, such that
the ray $C(i)$ is generated by $\lambda(i)$ for every $i\in [m]$.
It satisfies the following property:
\[
\mbox{if } \{i_1,\ldots,i_k\}\in K, \mbox{ then }
\lambda(i_1),\ldots,\lambda(i_k)\in V  \mbox{ are linearly
independent.}
\]
This condition is called \emph{$*$-condition}.

Note that in \cite{HM} all multi-fans were assumed rational. In
this case the generator $\lambda(i)$ can be chosen canonically as
a unique primitive integral vector contained in $C(i)$. Since we
want to include non-rational simplicial multi-fans in our
consideration, we should specify the generators somehow in order
for the subsequent calculations to make sense.

Finally we get to the following definition

\begin{defin}[New definition]\label{defMultifanNew}
A triple $(K, w,\lambda)$ is called \emph{a simplicial multi-fan}
in $V$. Here $w\colon {[m]\choose n}\to \Ro$ is \emph{a weight
function}, $K$ is a simplicial complex which is the support of
$w$, and $\lambda\colon [m]\to V$ is \emph{a characteristic
function}. Characteristic function satisfies $*$-condition with
respect to $K$: if $I=\{i_1,\ldots,i_k\}\in K$, then the vectors
$\lambda(i_1),\ldots,\lambda(i_k)$ are linearly independent in
$V$.
\end{defin}

Here $K$ may have ghost vertices, i.e. $i\in[m]$ such that
$\{i\}\notin K$. The value of characteristic function in such
vertices may be arbitrary (even zero). In the following we will
not pay too much attention to ghost vertices since their presence
does not affect the calculations.

Strictly speaking, the new definition is not equivalent to the old
one, since we cannot restore the poset $\Sigma$ and the sign
function $\sigma\colon\Sigma\indn\to \{\pm1\}$ when $w$ takes
non-integral values. Even in the integral case we cannot restore
$\Sigma$ uniquely. On the other hand, as was shown above, every
multi-fan in the sense of old definition determines a multi-fan in
the sense of new definition. We will work with the new definition
most of the time.

\begin{rem}\label{remBasSituations}
When passing from the old definition to the new one, we may lose
an important information. For example consider the multi-fan in
$\Ro^2=\langle e_1,e_2\rangle$ whose maximal cones are two copies
of the non-negative cone (i.e. the cone generated by basis vectors
$e_1, e_2$), and two rays are generated by $e_1$ and $e_2$. One of
the maximal cones is taken with the sign $+1$ and the other with
the sign $-1$. We remark that such multi-fan corresponds to the
torus manifold $S^4$ \cite{HM}. We have
$w^+(\{1,2\})=w^-(\{1,2\})=1$, therefore $w(\{1,2\})=0$. Thus $K$
is empty (equivalently, $w\colon {[m]\choose n}\to \Ro$ vanishes).

One way to avoid such situations is to assume in the beginning
that $\Sigma$ itself is a simplicial complex rather than a general
simplicial poset. In this case $K$ coincides with $\Sigma$ and the
weight function $w$ on $K$ coincides with the sign function
$\sigma$. In particular, $w$ takes the value $\pm1$ on each
maximal simplex of $K$ (see Example \ref{exMFfromSimpComp}).
\end{rem}

%
%

\subsection{Underlying simplicial
chain}\label{subsecMFansSimpChain}

Let $\triangle_{[m]}$ denote an abstract simplex on the vertex set
$[m]$, and let $\triangle_{[m]}^{(n-1)}$ be its $(n-1)$-skeleton.
Every subset $I\subset [m]$, $|I|=n$ may be considered as a
maximal simplex of $\triangle_{[m]}^{(n-1)}$. If $I\in K\indn$,
then we can orient $I$ as follows: we say that the order of
vertices $(i_1,\ldots,i_n)$ of $I$ is positive if and only if the
basis $(\lambda(i_1),\ldots,\lambda(i_n))$ determines the positive
orientation of $V$.

\begin{defin}
The element
\[
w_{ch}=\sum_{I\subset K\indn} w(I)I\in C_{n-1}(K;\Ro)\subseteq
C_{n-1}(\triangle_{[m]}^{(n-1)};\Ro)
\]
is called \emph{the underlying chain of a multi-fan} $\Delta$.
Here $C_{n-1}(K;\Ro)$ denotes the group of simplicial chains of
$K$.
\end{defin}

%
%

\subsection{Complete multi-fans}\label{subsecCompleteMFans}

Let us briefly recall the notion of projected multi-fan. We give
the construction in terms of new definition of multi-fan although
the similar construction may be given in terms of simplicial
posets and sign functions.

Let $\Delta=(K,w,\lambda)$ be a simplicial multi-fan in the space
$V$, and let $I=\{i_1,\ldots,i_k\}\in K$ be a simplex. Let $V_I$
denote the quotient vector space
$V/\langle\lambda(i_1),\ldots,\lambda(i_k)\rangle$. Consider the
multi-fan $\Delta_I=(\lk_KI,w_I,\lambda_I)$ in $V_I$ defined as
follows:
\begin{itemize}
\item $\lk_KI:=\{J\subset [m]\setminus I\mid I\cup J\in K\}$ is
the link of the simplex $I$ in $K$.
%
\item $w_I(J):= w(I\cup J)$ for every $J\in \lk_KI$,
$|J|=n-|I|$.
\item $\lambda_I(j)$ is the image of $\lambda(j)\in V$ under the
natural projection $V\to V_I=
V/\langle\lambda(i_1),\ldots,\lambda(i_k)\rangle$. It is easily
seen that $\lambda_I$ satisfies $*$-condition.
\end{itemize}
If we choose some orientation of a simplex $I\in K$, the space
$V_I$ obtains an orientation induced from $V$. To be precise, let
us say that the basis $([v_1],\ldots,[v_{n-k}])$ determines a
positive orientation of $V_I$ if the basis
$(v_1,\ldots,v_{n-k},\lambda(i_1),\ldots,\lambda(i_k))$ is a
positive basis of $V$ for a chosen positive order
$(i_1,\ldots,i_k)$ of vertices of $I$.

We call $\Delta_I$ \emph{the projected multi-fa}n of $\Delta$. The
construction satisfies the hereditary relation
$(\Delta_{I_1})_{I_2}=\Delta_{I_1\sqcup I_2}$ whenever it makes
sense, and there holds $\Delta_{\varnothing}=\Delta$.

Let us call a vector $v\in V$ generic with respect to $\Delta$ if
it is not contained in the vector subspaces spanned by the cones
of $\Delta$ of dimensions $<n$. For any such $v$ define the number
$d_v=\sum w(I)\in \Ro$, where the sum is taken over all subsets
$I=\{i_1,\ldots,i_n\}\subset[m]$ such that the cone generated by
$\lambda(i_1),\ldots,\lambda(i_n)$ contains $v$.

\begin{defin}\label{defCompleteMultifan}
The multi-fan $\Delta$ is called \emph{pre-complete} if $d_v$ does
not depend on a generic vector $v\in V$. In this case $d_v$ is
called \emph{the degree} of $\Delta$. The multi-fan $\Delta$ is
called \emph{complete} if the projected multi-fan $\Delta_I$ is
pre-complete for any simplex $I\in K$.
\end{defin}

\begin{rem}
Note that this definition allows $w$ to be constantly zero. We
call a multi-fan zero if its weight function constantly zero. A
zero multi-fan is pre-complete and therefore complete.
\end{rem}

\begin{prop}\label{propCompleteCriterion}
A multi-fan $\Delta$ is complete if and only if its underlying
simplicial chain $w_{ch}\in C_{n-1}(\triangle_{[m]}^{(n-1)};\Ro)$
is a cycle, that is $dw_{ch}=0$ for the standard simplicial
differential $d\colon C_{n-1}(\triangle_{[m]}^{(n-1)};\Ro)\to
C_{n-2}(\triangle_{[m]}^{(n-1)};\Ro)$ (if $n=1$, we assume that
$d\colon C_0(\triangle_{[m]}^{(n-1)};\Ro)\to \Ro$ is the
augmentation map).
\end{prop}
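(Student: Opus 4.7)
The plan is to reduce the statement to two pieces: first, characterize pre-completeness in terms of the boundary operator applied to $w_{ch}$, and second, reduce completeness to pre-completeness of all projected multi-fans using a ``link of a cycle is a cycle'' principle.

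For the first piece, I would fix a generic vector $v_0\in V$ and analyze how the integer $d_v$ changes as $v$ moves along a generic piecewise linear path to another generic vector. The function $v\mapsto d_v$ is constant on each connected component of the complement of the finite union of ``walls'' $W_J=\langle\lambda(J)\rangle$, taken over $(n-1)$-subsets $J$ that are facets of some $I\in K\indn$. When the path crosses $W_J$ transversely through the relative interior of the $(n-1)$-cone $\Cone(\lambda(J))$, for each vertex $i$ with $J\cup\{i\}\in K\indn$ the cone $\Cone(\lambda(J\cup\{i\}))$ either gains or loses the vector $v$, according to the sign $\epsilon_i\in\{\pm1\}$ of the projection of $\lambda(i)$ onto $V/\langle\lambda(J)\rangle$ (oriented via the chosen order of $J$). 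Hence the jump in $d_v$ equals $\sum_i\epsilon_i\,w(J\cup\{i\})$. Comparing with the combinatorial boundary, the positive orientation of $I=J\cup\{i\}$ is by definition $(j_1,\ldots,j_{n-1},i)$ multiplied by $\epsilon_i$, so the coefficient of the simplex $J$ in $dw_{ch}$ is precisely $\pm\sum_i\epsilon_i\,w(J\cup\{i\})$. Thus pre-completeness is equivalent to $dw_{ch}=0$.

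For the second piece, by definition $\Delta$ is complete iff $\Delta_I$ is pre-complete for every $I\in K$, which by the first piece is equivalent to $dw_{I,ch}=0$ for every $I\in K$, where $w_{I,ch}$ is the underlying chain of $\Delta_I$ in $V_I$. Taking $I=\varnothing$ gives $dw_{ch}=0$, so this direction is immediate. For the converse, I would prove by induction on $|I|$ that $dw_{ch}=0$ forces $dw_{I,ch}=0$. The inductive step reduces to the vertex case: if $c$ is a simplicial cycle and $i\in[m]$, write $c=i\ast c'+c''$ where $c''$ does not involve $i$; then $dc=c'-i\ast dc'+dc''$, and since $c'$ and $dc''$ do not involve $i$ while $i\ast dc'$ does, the vanishing of $dc$ forces $dc'=0$. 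Applied to $c=w_{ch}$, this yields $dw_{\{i\},ch}=0$ once one checks that the orientation of $w_{\{i\},ch}$ coming from $\lambda_{\{i\}}$ on $V_{\{i\}}$ agrees (up to a global sign) with the combinatorial orientation of $c'$; this is a direct consequence of the orientation convention on $V_I$, which was designed so that positive bases of $V_I$ concatenated with the positively ordered $\lambda(I)$ form positive bases of $V$.

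The main obstacle is the sign bookkeeping in the first piece: one must verify carefully that the combinatorial boundary coefficient and the geometric wall-crossing jump agree up to a single global sign (independent of $J$), rather than carrying mismatched signs that could accidentally produce spurious cancellations or new relations. Once the orientation convention for $V_I$ is tracked consistently, the second piece is a clean induction, and the whole proposition follows by combining the two pieces as above.
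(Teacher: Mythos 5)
Your second piece (the ``link of a cycle is a cycle'' induction together with the orientation convention on $V_I$) is sound, but the first piece contains a genuine error: pre-completeness of $\Delta$ is \emph{not} equivalent to $dw_{ch}=0$. The wall-crossing argument identifies the jump of $d_v$ at a generic point $\mu$ of a hyperplane $W$ with the sum $\sum_{J}\pm\bigl(\text{coefficient of }J\text{ in }dw_{ch}\bigr)$ taken over \emph{all} $(n-1)$-simplices $J$ with $\mu\in\relint\Cone(\lambda(J))$, not with the coefficient of a single $J$; since the cones of a multi-fan may overlap, several such $J$ can contribute at every point of $W$ and their contributions can cancel, so no choice of path isolates an individual coefficient. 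Concretely, take $n=2$, $\lambda(1)=\lambda(3)=e_1$, $\lambda(2)=e_2$, $w(\{1,2\})=1$, $w(\{2,3\})=-1$: then $d_v\equiv 0$, so $\Delta$ is pre-complete, yet $dw_{ch}=\{3\}-\{1\}\neq 0$ (and indeed $\Delta$ is not complete, since $\Delta_{\{1\}}$ is not pre-complete). This breaks your proof of the implication ``complete $\Rightarrow dw_{ch}=0$'', which you obtain precisely by applying the false direction of your first piece to $\Delta_\varnothing=\Delta$.

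What survives of the wall-crossing computation is only ``$dw_{ch}=0\Rightarrow\Delta$ pre-complete'', which together with your link induction does give ``$dw_{ch}=0\Rightarrow\Delta$ complete''. For the converse you should argue differently: for $J\in K$ with $|J|=n-1$ the coefficient of $J$ in $dw_{ch}$, namely $\sum_{i}\inc{J\cup\{i\}}{J}\,w(J\cup\{i\})$, equals up to a $J$-dependent sign the quantity $\sum_i \sgn\bigl(\lambda_J(i)\bigr)w(J\cup\{i\})$, i.e.\ exactly the pre-completeness defect of the one-dimensional projected multi-fan $\Delta_J$ (this is also where the $n=1$ convention with the augmentation map enters). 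Completeness includes pre-completeness of every such $\Delta_J$, hence forces every coefficient of $dw_{ch}$ to vanish. With that replacement your scheme closes up. Note for comparison that the paper does not argue geometrically at all: it quotes the integer-weight case from Hattori--Masuda and extends to real weights by rescaling and the observation that both conditions cut out rational linear subspaces of the space of weight functions.
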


\begin{proof}
In the case when $w$ takes only integral values, the statement is
proved in \cite[Sec.6]{HM}. If $w$ takes only rational values,
scaling the values of $w$ by a common denominator reduces the task
to the integral case. It remains to prove the statement for
real-valued $w$. Both conditions ``$\Delta$ is complete'' and
``$dw_{ch}=0$'' determine rational vector subspaces in the space
of all possible weight functions (it is not difficult to define
the pre-completeness condition in terms of the ``wall-crossing
relations'', which are linear relations on $w(I)$ with integral
coefficients). Thus the rational case implies the real case.
\end{proof}

For convenience we summarize the discussion by the following
definition.

\begin{defin}[Complete simplicial
multi-fan]\label{defCompleteMfanCycle} \emph{A complete simplicial
multi-fan} is a pair $(w_{ch},\lambda)$, where
$w_{ch}=\sum_{I\subset [m], |I|=n}w(I)I\in
Z_{n-1}(\triangle_{[m]}^{(n-1)})$ is a simplicial cycle on $m$
vertices, and $\lambda\colon [m]\to V$ is any function satisfying
the condition: $\{\lambda(i)\}_{i\in I}$ is a basis of $V$ if
$|I|=n$ and $w(I)\neq 0$.
\end{defin}

For a complete multi-fan $\Delta$ the corresponding homology class
$[w_{ch}]\in \Hr_{n-1}(K;\Ro)\subset
\Hr_{n-1}(\triangle_{[m]}^{(n-1)};\Ro)$ will be denoted $[\Delta]$
and called the underlying homology class of~$\Delta$. Since
$C_n(\triangle_{[m]}^{(n-1)};\Ro)=0$, the groups
$Z_{n-1}(\triangle_{[m]}^{(n-1)})$ and
$\Hr_{n-1}(\triangle_{[m]}^{(n-1)})$ may be identified. Thus
$w_{ch}$ and $[\Delta]$ are just two different notations for the
same object.

\begin{ex}\label{exMFfromSimpComp}
One obvious way to obtain a complete multi-fan is to start with
any oriented pseudomanifold $K$ of dimension $n-1$ on the set of
vertices $[m]$, and take any characteristic function
$\lambda\colon[m]\to V$. Since $K$ is oriented, every maximal
simplex $I$ of $K$ becomes oriented, but this orientation may be
different from the one determined by characteristic function (see
subsection \ref{subsecMFansSimpChain}). Let $w(I)$ be $+1$ or $-1$
depending on whether these two orientations agree or not. Let us
extend the weight function by zeroes to non-simplices of $K$. The
corresponding simplicial chain $w_{ch}=\sum_Iw(I)I\in
C_{n-1}(\triangle_{[m]}^{(n-1)};\Ro)$ is closed, since it is
exactly the fundamental chain of $K$ in $\triangle_{[m]}^{(n-1)}$.
Therefore, $(w_{ch},\lambda)$ is a complete simplicial fan.
\end{ex}

\begin{ex}
The previous example may be restricted to the case when $K$ is a
homology sphere or homology manifold. We will study these two
cases in more detail in Section \ref{secStructMFanAlg}.
\end{ex}

We say that $\Delta$ is based on an orientable simplicial
pseudomanifold $K$ if the corresponding simplicial cycle is given
by $K$.

There is one interesting feature of (complete) multi-fans revealed
by Definitions \ref{defMultifanNew} and
\ref{defCompleteMfanCycle}. The multi-fans with the given set of
vertices $[m]$ and the given characteristic function $\lambda$
form a vector space: we may add them by adding their weights and
multiply by real numbers by scaling their weights. Let
$\MultiFans_\lambda$ denote the vector space of complete
multi-fans with the given characteristic function~$\lambda$. This
space may be identified with certain vector subspace of
$Z_{n-1}(\triangle_{[m]}^{(n-1)};\Ro)$. We will discuss this
subspace in detail in subsection
\ref{subsecCharVolPolyNonGeneral}. The set of multi-fans with
integral weights forms a lattice inside $\MultiFans_\lambda$ which
is a certain sublattice of $Z_{n-1}(\triangle_{[m]}^{(n-1)};\Zo)$.

%
%
%
%
%
%
%

\section{Definitions: multi-polytopes}\label{secDefMultipoly}

%
%

\subsection{Multi-polytopes}

Let $\Delta$ be a simplicial multi-fan with characteristic
function $\lambda\colon[m]\to V$. Let $HP(V^*)$ denote the set of
all affine hyperplanes in the dual vector space $V^*$.

For each $i\in [m]$ choose an affine hyperplane $\ca{H}(i)\subset
V^*$ in the dual space which is orthogonal to the linear hull of
the $i$-th cone. In other words, $\ca{H}(i)$ is defined by
equation $\ca{H}(i)=\{u\in V^*\mid\langle u,
\lambda(i)\rangle=c_i\}$ for some constant $c_i\in \Ro$ called
\emph{the support parameter} of $\ca{H}(i)$.

\begin{defin}\label{defMultiPoly}
\emph{A multi-polytope} $P$ is a pair $(\Delta,\ca{H})$, where
$\Delta$ is a multi-fan, and $\ca{H}\colon [m]\to HP(V^*)$ is a
function such that $\ca{H}(i)$ is orthogonal to $\lambda(i)$ for
any $i\in[m]$. We say that $P$ is based on the multi-fan $\Delta$.
\end{defin}

Although the definition may be stated in general, we restrict to
simplicial multi-fans $\Delta$, in which case $P$ is called a
simple multi-polytope.

Let us denote the set of all multi-polytopes based on $\Delta$ by
$\Poly(\Delta)$. Every such multi-polytope is completely
determined by its support parameters $c_1,\ldots,c_m$. Thus
$\Poly(\Delta)$ has natural coordinates $(c_1,\ldots,c_m)$ and may
be identified with $\Ro^m$. This space is called \emph{the space
of analogous polytopes} based on $\Delta$.

To simplify notation, we denote $\ca{H}(i)$ by $H_i$ and set
\[
H_I := \bigcap\nolimits_{i\in I}H_i \mbox{ for } I\in K.
\]
$H_I$ is a codimension $|I|$ affine subspace in $V^*$, since the
normals of the hyperplanes $H_i$, $i\in I$ are linearly
independent by $*$-condition. In particular, when $I$ is a maximal
simplex, $I\in K\indn$, $H_I$ is a point in $V^*$ which is called
the vertex of $P$.

\begin{defin}
Let $\Delta$ be a simplicial multi-fan in $V$ with the underlying
simplicial complex $K$ and let $P$ be a simple multi-polytope
based on $\Delta$. Let $I\in K$. Consider a simple multi-polytope
$F_I=(\Delta_I,\ca{H}_I)$ in the space $H_I\subset V^*$. Note that
the projected multi-fan $\Delta_I$ is defined in the space $V_I$
(see subsection \ref{subsecCompleteMFans}), so the multi-polytope
based on $\Delta_I$ should formally lie in $V_I^*$. Nevertheless,
we may identify $H_I$ with $V_I^*$. The supporting hyperplanes of
$F_I$ are defined as follows: $\ca{H}_I(j)=H_I\cap H_j$ for any
vertex $j$ of $\lk_KI$. The multi-polytope $F_I$ is called
\emph{the face} of $P$ dual to $I$.
\end{defin}

%
%

\subsection{Duistermaat--Heckman function of a
multi-polytope}\label{subsecDHfunc}

Suppose $I\in K\indn$. Then the set $\{\lambda(i)\mid i\in I\}$ is
a basis of $V$. Denote its dual basis of $V^*$ by $\{u_i^I\mid
i\in I\}$, i.e. $\langle u_i^I,\lambda(j)\rangle=\delta_{ij}$
where $\delta_{ij}$ denotes the Kronecker delta. Take a generic
vector $v\in V$. Then $\langle u_i^I,v\rangle\neq 0$ for all $I\in
K\indn$ and $i\in I$. Set
\[
(-1)^I:=(-1)^{\sharp\{i\in I\mid\langle
u_i^I,v\rangle>0\}}\quad\mbox{ and }\quad (u_i^I)^+:=\begin{cases}
u_i^I\mbox{ if }\langle u_i^I,v\rangle>0,\\
-u_i^I\mbox{ if }\langle u_i^I,v\rangle<0.
\end{cases}
\]
We denote by $C^*(I)^+$ the cone in $V^*$ spanned by $(u_i^I)^+$'s
($i\in I$) with apex at a vertex $H_I$ of a multi-polytope $P$,
and by $\phi_I$ the function on $V^*$ which takes value $1$ inside
$C^*(I)^+$ and $0$ outside (this is just a characteristic function
of a subset but we want to avoid this term since it is already
reserved for the function $\lambda$).

\begin{defin}
A function $\Dh_P$ on $V^*\setminus \bigcup_{i=1}^mH_i$ defined by
\[
\sum_{I\in K\indn} (-1)^Iw(I)\phi_I
\]
is called \emph{a Duistermaat--Heckman function} associated with
$P$.
\end{defin}

The summands in the definition depend on the choice of a generic
vector $v\in V$. Nevertheless, the function itself is independent
of $v$ when $\Delta$ is complete (we refer to \cite{HM} when $w$
is integral-valued and note that the same argument works for real
weights).

The function $\Dh_P$ for a simple multi-polytope $P$ based on a
complete multi-fan has the following geometrical interpretation.
Let $S$ be the realization of first barycentric subdivision of $K$
and let $G_I\subset S$ be the dual face of $I\in K$,
$I\neq\varnothing$, i.e. a realization of the set
$\{\{I<I_1<\cdots<I_k\}\in K'\}$. If $I\in K\indn$, then $G_I$ is
a point. For a given multi-polytope $P$ based on $\Delta$ there
exists a continuous map $\psi\colon S\to V^*$ such that
$\psi(G_I)\subset H_I$ for any $I\in K$, $I\neq\varnothing$ (in
particular, when $I\in K\indn$, this map sends the point $G_I\in
S$ to the vertex $H_I$ of a multi-polytope $P$). This map is
unique up to homotopy preserving the stratifications.

Let us take any point $u\in V^*\setminus \bigcup_{i=1}^mH_i$. Then
$u$ is not contained in the image of $\psi$ by the construction of
$\psi$. Thus we may consider the induced map in homology:
\[
\psi_*\colon \Hr_{n-1}(S;\Ro)\to \Hr_{n-1}(V^*\setminus
\{u\};\Ro).
\]
The underlying simplicial cycle $[\Delta]$ may be considered as an
element of the group $\Hr_{n-1}(S;\Ro)$. Since $V^*$ is oriented,
we have the fundamental class $[V^*\setminus \{u\}]\in
\Hr_{n-1}(V^*\setminus \{u\};\Ro)$. Thus
\[
\psi_*([\Delta])=\WN_P(u)\cdot[V^*\setminus \{u\}],
\]
for some number $\WN_P(u)\in \Ro$. This number has a natural
meaning of winding number of cycle $[\Delta]$ around $u$. It
happens that this number is exactly the value of $\Dh_P$ at the
point $u\in V^*$ (see details in \cite[Sec.6]{HM}).

It is easily seen from the above consideration that $\Dh_P$ has a
compact support when $\Delta$ is complete. Thus in the case of
complete multi-fan we may define \emph{the volume of a
multi-polytope} $P$ as
\begin{equation}\label{eqDefVolMPoly}
\vol(P) = \int_{V^*}\Dh_P(u)d\mu
\end{equation}
with respect to some euclidean measure on $V^*$ (in a presence of
a lattice $N\subset V$ the measure is normalized so that the
fundamental domain of $N^*\subset V^*$ has volume~$1$).

Finally, we may consider the volume as a function on the space
$\Poly(\Delta)\cong \Ro^m$ of analogous multi-polytopes. We have a
function $V_\Delta\colon \Ro^m\to \Ro$ whose value at
$(c_1,\ldots,c_m)$ equals $\vol(P)$ for the multi-polytope $P$
with the support parameters $c_1,\ldots,c_m$. The goal of the next
section is to study this function using equivariant localization
ideas and prove Theorem \ref{thmVolPoly}.

\begin{rem}
Needless to say that in case of actual simple convex polytopes the
notions introduced above coincide with the classical ones. If $P$
is a simple convex polytope and $\Delta$ is its normal fan, then
$\Dh_P$ takes the value $1$ inside $P$ and $0$ outside. The volume
of $P$ is just the usual volume. Note that even if $\Delta$ is an
actual fan, not all multi-polytopes based on $\Delta$ are actual
convex polytopes. Nevertheless, the notion of volume and
Duistermaat--Heckman function have transparent geometrical
meanings for all of them.
\end{rem}

\begin{ex}
Consider the two-dimensional multi-fan $\Delta$ with $m=5$ and
$V=\Ro^2$ depicted on Fig.\ref{figMFanStar}, left. Its
characteristic function is the following: $\lambda(1)=(1,0)$,
$\lambda(2)=(-2,1)$, $\lambda(3)=(1,-2)$, $\lambda(4)=(0,1)$,
$\lambda(5)=(-1,-1)$. The weight function takes the value $1$ on
the subsets $\{1,2\}$, $\{2,3\}$, $\{3,4\}$, $\{4,5\}$, $\{1,5\}$
and the value $0$ on all other subsets. Geometrically this
indicates the fact that in the multi-fan we have the cones
generated by $\{\lambda(1),\lambda(2)\}$,
$\{\lambda(2),\lambda(3)\}$, etc. with multiplicity one, and do
not have the cones generated by $\{\lambda(1),\lambda(3)\}$,
$\{\lambda(1),\lambda(4)\}$, and so on. It can be seen that every
generic point of $V=\Ro^2$ is covered by exactly two cones,
therefore $\Delta$ is pre-complete of degree $2$. Moreover, a
simple check shows that all its projected multi-fans are complete.
Hence $\Delta$ is complete. The underlying chain of $\Delta$ has
the form $(1,2)+(2,3)+(3,4)+(4,5)+(5,1)\in
C_1(\triangle_{[5]}^{(1)};\Ro)$ which is obviously a simplicial
cycle. The underlying complex $K$ of $\Delta$ is a circle made of
$5$ segments, and $[\Delta]$ is its fundamental class.

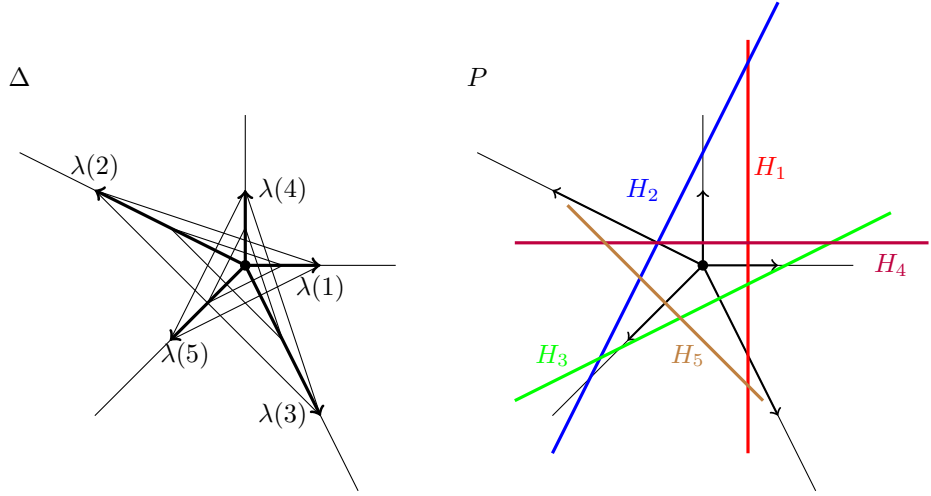
\begin{figure}[h]
\begin{center}
    \begin{tikzpicture}[scale=1]
        \draw (0,0)--(2,0);
        \draw (0,0)--(-3,1.5);
        \draw (0,0)--(1.5,-3);
        \draw (0,0)--(0,2);
        \draw (0,0)--(-2,-2);
        \draw[->, very thick] (0,0)--(1,0);
        \draw[->, very thick] (0,0)--(-2,1);
        \draw[->, very thick] (0,0)--(1,-2);
        \draw[->, very thick] (0,0)--(0,1);
        \draw[->, very thick] (0,0)--(-1,-1);
        \fill (0,0) circle (2pt);

        \draw (1,-0.3) node{$\lambda(1)$};
        \draw (-2,1.3) node{$\lambda(2)$};
        \draw (0.5,-2) node{$\lambda(3)$};
        \draw (0.5,1) node{$\lambda(4)$};
        \draw (-0.8,-1.2) node{$\lambda(5)$};

        \draw (1,0)--(-2,1);
        \draw (0.5,0)--(-1,0.5);
        \draw (-2,1)--(1,-2);
        \draw (-1,0.5)--(0.5,-1);
        \draw (1,-2)--(0,1);
        \draw (0.5,-1)--(0,0.5);
        \draw (0,1)--(-1,-1);
        \draw (0,0.5)--(-0.5,-0.5);
        \draw (-1,-1)--(1,0);
        \draw (-0.5,-0.5)--(0.5,0);

        \draw (-3,2.5) node{$\Delta$};
    \end{tikzpicture}
    \qquad
    \begin{tikzpicture}[scale=1]

        \draw (0,0)--(2,0);
        \draw (0,0)--(-3,1.5);
        \draw (0,0)--(1.5,-3);
        \draw (0,0)--(0,2);
        \draw (0,0)--(-2,-2);
        \draw[->, thick] (0,0)--(1,0);
        \draw[->, thick] (0,0)--(-2,1);
        \draw[->, thick] (0,0)--(1,-2);
        \draw[->, thick] (0,0)--(0,1);
        \draw[->, thick] (0,0)--(-1,-1);
        \fill (0,0) circle (2pt);

        \draw[very thick, red] (0.6,-2.5)--(0.6,3);
        \draw[very thick, blue] (1,3.5)--(-2,-2.5);
        \draw[very thick, green] (2.5,0.7)--(-2.5,-1.8);
        \draw[very thick, purple] (-2.5,0.3)--(3,0.3);
        \draw[very thick, brown] (-1.8,0.8)--(0.8,-1.8);

        \draw[red] (0.9,1.3) node{$H_1$};
        \draw[blue] (-0.8,1) node{$H_2$};
        \draw[green] (-2,-1.2) node{$H_3$};
        \draw[purple] (2.5,0) node{$H_4$};
        \draw[brown] (-0.2,-1.2) node{$H_5$};

        \draw (-3,2.5) node{$P$};
    \end{tikzpicture}
\end{center}
\caption{Example of a multi-fan $\Delta$ and a multi-polytope $P$
based on it.} \label{figMFanStar}
\end{figure}

An example of a multi-polytope $P$ based on $\Delta$ is shown at
Fig.\ref{figMFanStar}, right. Each hyperplane $H_i$ is orthogonal
to the linear span of the corresponding ray $\lambda(i)$ of
$\Delta$, $i\in [5]$. The Duistermaat--Heckman function of $P$ is
shown on Fig.\ref{figMPolyDH}. The function is constant on the
chambers: it takes value $2$ in the middle pentagon since the
multi-polytope ``winds'' around the points of this region twice,
and takes value $1$ on triangles adjacent to the central pentagon.
The value of $\Dh_P$ in all other chambers is $0$. The volume of a
multi-polytope is therefore \emph{not} just the volume of the
five-point star: the points in the central region contribute to
the volume twice.
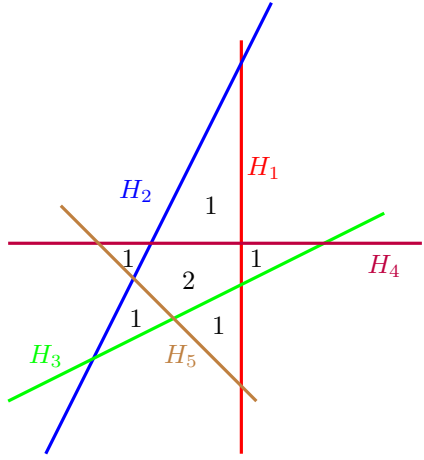
\begin{figure}[h]
\begin{center}
    \begin{tikzpicture}[scale=1]


        \draw[very thick, red] (0.6,-2.5)--(0.6,3);
        \draw[very thick, blue] (1,3.5)--(-2,-2.5);
        \draw[very thick, green] (2.5,0.7)--(-2.5,-1.8);
        \draw[very thick, purple] (-2.5,0.3)--(3,0.3);
        \draw[very thick, brown] (-1.8,0.8)--(0.8,-1.8);

        \draw[red] (0.9,1.3) node{$H_1$};
        \draw[blue] (-0.8,1) node{$H_2$};
        \draw[green] (-2,-1.2) node{$H_3$};
        \draw[purple] (2.5,0) node{$H_4$};
        \draw[brown] (-0.2,-1.2) node{$H_5$};

        \draw (-0.1,-0.2) node{$2$};
        \draw (-0.9,0.1) node{$1$};
        \draw (0.2,0.8) node{$1$};
        \draw (0.8,0.1) node{$1$};
        \draw (0.3,-0.8) node{$1$};
        \draw (-0.8,-0.7) node{$1$};

    \end{tikzpicture}
\end{center}
\caption{Duistermaat--Heckman function of the multi-polytope $P$.}
\label{figMPolyDH}
\end{figure}

\end{ex}

%
%
%
%
%
%

\section{Volume polynomial from the index
map}\label{secVolPolyIndexMap}

%
%

\subsection{Index map}\label{subsecIndexMap}

Let $\Delta=(w_{ch},\lambda)$ be a simplicial multi-fan in $V\cong
\Ro^n$ with $m$ rays. The characteristic function
$\lambda\colon[m]\to V$ may be considered as a linear map
$\lambda\colon \Ro^m\to V$ which sends the basis vector $e_i\in
\Ro^m$, $i\in [m]$ to $\lambda(i)$. Let $\{x_i\}_{i\in[m]}$ be the
basis of $(\Ro^m)^*$ dual to $\{e_i\}_{i\in[m]}$, so that
$(\Ro^m)^*=\langle x_1,\ldots,x_m\rangle$. Let us also consider
the adjoint map $\lambda^\top\colon V^*\to (\Ro^m)^*$. By
definition it sends the vector $u\in V^*$ to
\[
\sum_{i=1}^m\langle u,\lambda(i)\rangle x_i.
\]
For any maximal simplex $I=\{i_1,\ldots,i_n\}\in K\indn$ the
vectors $\{\lambda(i)\}_{i\in I}$ form a basis of $V$ according to
$*$-condition, defined in subsection
\ref{subsecMFansWeightCharFunc}. Let $\{u_i^I\}_{i\in I}$ be the
dual basis of $V^*$. Let $\iota_I\colon (\Ro^m)^*\to V^*$ be the
linear map defined by
\begin{equation}\label{eqIotaVanish}
\iota_I(x_i)=\begin{cases}u^I_i\mbox{  if } i\in I,\\
0,\mbox{ if }i\notin K.\end{cases}
\end{equation}
%
%
%
Consider $\Ro[x_1,\ldots,x_m]$, the algebra of polynomials on
$(\Ro^m)^*$. Also let $\Ro[V^*]$ denote the algebra of polynomials
on $V^*$. Both polynomial algebras are graded, where we set the
degrees of the generating spaces $(\Ro^m)^*$ and $V^*$ to $2$. The
linear map $\iota_I$ induces the graded algebra homomorphism
\[
\iota_I\colon \Ro[x_1,\ldots,x_m]\to \Ro[V^*],
\]
denoted by the same letter. In the following, if $A$ is a graded
algebra, we denote by $A_j$ its homogeneous part of degree $j$.

Let $S^{-1}\Ro[V^*]$ denote the ring of rational functions over
$\Ro[V^*]$ graded in a natural way. Given a weight function
$w\colon K\indn\to \Ro$ we can define the linear map
$\pi_!^\Delta\colon\Ro[x_1,\ldots,x_m]\to S^{-1}\Ro[V^*]$ as the
following weighted sum:
\begin{equation}\label{eqDefIndexMap}
\pi_!^\Delta(x)=\sum_{I\in K\indn}\dfrac{w(I) \iota_I(x)}{|\det
\lambda_I|\prod_{i\in I}\iota_I(x_i)}
\end{equation}
for $x\in \Ro[x_1,\ldots,x_m]$. We assume that an inner product is
fixed on $V$, so that $|\det\lambda_I| = |\det(\lambda(i)_{i\in
I})|$ is well-defined even if there is no lattice in $V$. The
inner product on $V$ induces a euclidean measure on $V^*$ and
$|\det\lambda_I|$ is the volume of the parallelepiped spanned by
$\{\lambda(i)\}_{i\in I}$. The translation invariant measure on
$V^*$ is assumed the same as in \eqref{eqDefVolMPoly}. The map
$\pi_!^\Delta$ is well-defined since $\lambda_I$ are isomorphisms.
It can be seen that $\pi_!^\Delta$ is homogeneous of degree $-2n$.
It is called \emph{the index map of multi-fan}
$\Delta=(K,w,\lambda)$.

\begin{thm}\label{thmIndexMap}
The following properties of $\Delta$ are equivalent:
\begin{enumerate}
\item The image of $\pi_!^\Delta$ lies in $\Ro[V^*]\subset
S^{-1}\Ro[V^*]$;

\item The underlying chain $w_{ch}=\sum_{I\in K\indn}w(I)I$ is
closed;

\item The multi-fan $\Delta=(w_{ch},\lambda)$ is complete.
\end{enumerate}
\end{thm}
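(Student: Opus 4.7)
The plan is to dispatch (2)$\Leftrightarrow$(3) by direct appeal to Proposition~\ref{propCompleteCriterion} and concentrate on (1)$\Leftrightarrow$(2), which I will attack by analyzing the poles of the rational function $\pi_!^\Delta(x)\in S^{-1}\Ro[V^*]$. Each summand of \eqref{eqDefIndexMap} has only simple poles, along the $n$ hyperplanes $\{u_i^I=0\}$ in $V$. The observation driving everything is that the hyperplane $\{u_k^I=0\}$ depends only on $J:=I\setminus\{k\}$: it equals $H_J:=\langle\lambda(j):j\in J\rangle$. Consequently, grouping the summands by $(n-1)$-subsets $J$, the terms contributing a pole along $H_J$ are exactly those with $I=J\cup\{k\}\in K\indn$, and $\pi_!^\Delta(x)$ lies in $\Ro[V^*]$ if and only if the principal part along every such $H_J$ vanishes.

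To compute that principal part, I would fix $J$, pick a nonzero linear form $t\in V^*$ vanishing on $H_J$, and write $u_k^{J\cup\{k\}}=a_k t$ for each admissible $k$. One computes $a_k=1/t(\lambda(k))$, while $|\det\lambda_{J\cup\{k\}}|$ factors as (the distance from $\lambda(k)$ to $H_J$) times $V_J$, where $V_J$ is the $(n-1)$-dimensional volume of the parallelepiped spanned by $\{\lambda(j):j\in J\}$ inside $H_J$ (a quantity independent of $k$). The restrictions $u_j^{J\cup\{k\}}|_{H_J}$ for $j\in J$ likewise form a $k$-independent dual basis of $H_J^*$. Assembling these ingredients, the principal part of $\pi_!^\Delta(x)$ along $H_J$ reduces, for every $x\in\Ro[x_1,\ldots,x_m]$, to a common nonzero factor times
\[
\sum_{k:\,J\cup\{k\}\in K\indn}\varepsilon_k\,w(J\cup\{k\}),
\]
where $\varepsilon_k\in\{\pm1\}$ records on which side of $H_J$ the vector $\lambda(k)$ lies. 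This sum is precisely the coefficient of $J$ in the simplicial boundary $dw_{ch}\in C_{n-2}(\triangle_{[m]}^{(n-1)};\Ro)$, so demanding its vanishing for every $J$ amounts to the cycle condition $dw_{ch}=0$, establishing (1)$\Leftrightarrow$(2).

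The hard part will be verifying that the signs $\varepsilon_k$ produced by the residue calculation genuinely match the signs of the simplicial differential under the orientation convention of Subsection~\ref{subsecMFansSimpChain}, where each maximal simplex $(i_1,\ldots,i_n)$ is oriented by the positivity of the basis $(\lambda(i_1),\ldots,\lambda(i_n))$ of $V$. A shortcut that sidesteps most of this bookkeeping is to observe that conditions (1) and (2) both cut out $\Qo$-defined linear subspaces of the space of real weight functions on $\binom{[m]}{n}$; it then suffices, as in the proof of Proposition~\ref{propCompleteCriterion}, to establish the equivalence for integer-valued $w$, where it is the standard Atiyah--Bott--Berline--Vergne localization identity in equivariant cohomology and may be imported from~\cite{HM}.
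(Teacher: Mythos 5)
Your main line of attack is sound and, for the implication (1)$\Rightarrow$(2), essentially reproduces the paper's computation: the paper applies $\pi_!^\Delta$ to the monomial $x_J$ for $J\in K$, $|J|=n-1$, observes that the result has degree $-2$ and hence must vanish if it is a polynomial, and identifies $|\det\lambda_I|\,\iota_I(x_j)$ (for $I=J\cup\{j\}$) with $\inc{I}{J}\varrho$, where $\varrho(v)=\det((\lambda(i))_{i\in J},v)$ --- which is exactly your residue computation along $H_J$ tested on $x_J$, signs included. Where you genuinely diverge is in (2)$\Rightarrow$(1): the paper imports the integral case from \cite{HM} and then passes to rational and real data by rescaling and a continuity/density argument, whereas your full principal-part analysis for arbitrary $x$ gives both implications at once and is self-contained. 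That is a real gain, but it needs two small repairs. First, distinct $(n-1)$-subsets $J\neq J'$ may span the same hyperplane of $V$, so the polar parts must a priori be grouped by hyperplanes rather than by subsets; this is harmless because testing against the monomials $x_J$ isolates the contribution of each $J$ (as in the paper's computation), but it should be said. Second, the sign verification you defer is not actually hard: taking $t=\varrho$ as above, $\sgn\varrho(\lambda(k))$ is by definition the incidence number $\inc{J\cup\{k\}}{J}$ under the orientation convention of Subsection~\ref{subsecMFansSimpChain}, so your $\varepsilon_k$ matches the simplicial differential on the nose.

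By contrast, the closing shortcut should be dropped. The claim that condition (1) cuts out a $\Qo$-defined subspace of weight functions is precisely what the sign analysis establishes, so invoking it in order to avoid that analysis is circular; and even granting it, reducing to integer-valued $w$ does not place you in the setting of \cite{HM}, whose localization lemma requires $\lambda$ to take values in a lattice --- your $\lambda$ is an arbitrary real characteristic function, and the signs $\varepsilon_k$ are only locally constant in $\lambda$, so a naive approximation in $\lambda$ does not obviously preserve the direction (1)$\Rightarrow$(2). Do the one-line sign check instead.
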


\begin{proof}
Equivalence of (2) and (3) was already shown in Proposition
\ref{propCompleteCriterion}. The implication (2) $\Rightarrow$
(1), in case when $\lambda$ takes values in the lattice and $w$ is
integer-valued, is proved in \cite[Lm.8.4]{HM}. It should be noted
that in this case $|\det\lambda_I|$ appearing in the denominator
is nothing but the order of the finite group $G_I=N/N_I$, where
$N\subset V$ is the lattice and $N_I$ is a sublattice generated by
$\{\lambda(i)\}_{i\in I}$. The situation when $\lambda$ and $w$
are rational is reduced to the integral case by multiplying all
values of $\lambda$ and $w$ by a common denominator (both
conditions (1) and (2) are invariant under rescaling). The real
case follows by continuity. Indeed, the subset of simplicial
cycles with rational coefficients, $Z_{n-1}(K;\Qo)$, is dense in
$Z_{n-1}(K;\Ro)$; the right hand side of \eqref{eqDefIndexMap} is
continuous with respect to $\lambda$ and $w$; and the subset
$\Ro[V^*]$ is closed in $S^{-1}\Ro[V^*]$. Therefore, arbitrary
complete multi-fan $(w,\lambda)$ can be approximated by a sequence
of rational complete multi-fans
$\Delta_\alpha=(w_\alpha,\lambda_\alpha)$ which implies that the
values of $\pi_!^\Delta$ are approximated by the values of
$\pi_!^{\Delta_\alpha}$. Since the values of
$\pi_!^{\Delta_\alpha}$ are polynomials, so are the values of
$\pi_!^\Delta$.

Let us prove that (1) implies (2). Take any simplex $J\in K$ such
that $|J|=n-1$ and consider the monomial $x_J=\prod_{i\in J}x_j$
of degree $2(n-1)$ lying in $\Ro[x_1,\ldots,x_m]$. The map
$\pi_!^\Delta$ lowers the degree by $2n$ thus we have
$\deg\pi_!^\Delta(x_J)=-2$. Condition (1) implies that
$\pi_!^\Delta(x_J)$ is a polynomial, thus $\pi_!^\Delta(x_J)=0$.
By definition, we have
\[
\pi_!^\Delta(x_J)=\sum_{I\in
K\indn}\dfrac{w(I)\iota_I(x_J)}{|\det\lambda_I|\prod_{i\in
I}\iota_I(x_i)}
\]
Note that $\iota_I$ is a ring homomorphism and $\iota_I(x_i)=0$ if
$i\notin I$ by \eqref{eqIotaVanish}. Therefore,
\[
\pi_!^\Delta(x_J)=\sum_{I\in K\indn, J\subset
I}\dfrac{w(I)\prod_{i\in
J}\iota_I(x_i)}{|\det\lambda_I|\prod_{i\in I}\iota_I(x_i)} =
\sum_{j\in[m]\setminus J, I:=J\cup \{j\}\in
K\indn}\dfrac{w(I)}{|\det\lambda_I|\iota_I(x_j)}
\]
Recall that $\iota_I(x_i)=u_i^I$, where $\{u_i^I\}_{i\in I}$ is
the basis of $V^*$ dual to the basis $\{\lambda(i)\}_{i\in I}$ of
$V$. Consider the linear functional $\varrho\in V^*$ taking the
value $\varrho(v)=\det((\lambda(i))_{i\in J},v)$ for any $v\in V$.
It can be seen that $|\det\lambda_I|\iota_I(x_j)\in V^*$, where
$I=J\cup\{j\}$, coincides with $\varrho$ up to sign. More
precisely $|\det\lambda_I|\iota_I(x_j)=\inc{I}{J} \varrho$, where
$\inc{I}{J}$ is the incidence sign of two simplices of $K$ (it
appears because we need to permute the vectors
$((\lambda(i))_{i\in J},\lambda(j))$ in order to get the positive
determinant). Therefore,
\[
0=\pi_!^\Delta(x_J)=\dfrac{1}{\varrho}\sum_{I\in K\indn, J\subset
I} \inc{I}{J}w(I)
\]
It remains to notice that the sum in this expression is exactly
the coefficient of $J$ in the simplicial chain $dw_{ch}\in
C_{n-2}(K;\Ro)$. This calculation applies to any $J\in K$,
$|J|=n-2$, therefore $dw_{ch}=0$.
\end{proof}

The map $\lambda^{\top}\colon V^*\to (\Ro^m)^*$, the adjoint of
$\lambda$, induces the ring homomorphism $\Ro[V^*]\to
\Ro[x_1,\ldots,x_m]$. Hence $\Ro[x_1,\ldots,x_m]$ obtains the
structure of $\Ro[V^*]$-module. It can be checked that
$\lambda^\top$ is the right inverse of each $\iota_I\colon
(\Ro^m)^*\to V^*$, therefore all ring homomorphisms $\iota_I\colon
\Ro[x_1,\ldots,x_m]\to\Ro[V^*]$ are the $\Ro[V^*]$-module
homomorphisms. Thus $\pi_!^\Delta$ is also a homomorphism of
$\Ro[V^*]$-modules (even in the case $w_{ch}$ is not closed).

\begin{rem}
Note that conditions (1) and (2) in Theorem \ref{thmIndexMap} make
sense over an arbitrary field $\ko$. We may start with a
$\ko$-valued chain $w_{ch}\in C_{n-1}(K;\ko)$ and a characteristic
function valued in $\ko^n$. These data allow to define the maps
$\iota_I$ and $\pi_!^\Delta$ absolutely similar to the real case.

\begin{problem}
Does equivalence of (1) and (2) in Theorem \ref{thmIndexMap} hold
for arbitrary fields?
\end{problem}

For general fields we cannot reduce the task to the integral case
but it is likely that there exists a straightforward algebraical
proof.
\end{rem}

%
%

\subsection{Stanley--Reisner rings}\label{subsecSRrings}

Let us recall the definition of the Stanley--Reisner ring.

\begin{defin}
Let $K$ be a simplicial complex on the vertex set $[m]$ and $\ko$
be a ground ring (either $\Zo$ or a field). The Stanley--Reisner
ring is the quotient of a polynomial ring by the Stanley--Reisner
ideal:
\[
\ko[K]:=\ko[x_1,\ldots,x_m]/I_{SR},\mbox{ where }
I_{SR}=(x_{i_1}\cdot\ldots\cdot x_{i_k}\mid
\{i_1,\ldots,i_k\}\notin K),
\]
endowed with the grading $\deg x_i=2$ and the natural structure of
graded $\ko[x_1,\ldots,x_m]$-module.
\end{defin}

For now let us concentrate on the case $\ko=\Ro$. Given a
characteristic function $\lambda$ on $K$ we may define a certain
ideal in $\Ro[K]$ generated by linear forms. As before, let
$\lambda^\top\colon V^*\to (\Ro^m)^*=\langle
x_1,\ldots,x_m\rangle$ denote the adjoint map of $\lambda\colon
\Ro^m\to V$. Let $\Theta$ denote the ideal of
$\Ro[x_1,\ldots,x_m]$ generated by the image of $\lambda^\top$. By
abuse of notation we denote the corresponding ideal in $\Ro[K]$
with the same letter $\Theta$.

Let us state things in the coordinate form. Fix a basis
$f_1,\ldots,f_n$ of $V$. Then every characteristic value
$\lambda(i)$, $i\in[m]$ is written as a row-vector
$(\lambda_{i,1},\ldots,\lambda_{i,n})$, where $\lambda_{i,j}\in
\Ro$. The $*$-condition for $\lambda$ (see subsection
\ref{subsecMFansWeightCharFunc}) states that the square matrix
formed by row-vectors $(\lambda_{i,1},\ldots,\lambda_{i,n})_{i\in
I}$ is non-degenerate for any $I\in K\indn$.

If we consider the dual basis $\bar{f}_1,\ldots,\bar{f}_n$ in the
dual space $V^*$, then its image under $\lambda^\top\colon V^*\to
(\Ro^m)^*=\langle x_1,\ldots,x_m\rangle$ has the form
\[
\theta_j:=\lambda^\top(\bar{f}_j)=\lambda_{1,j}x_1+\lambda_{2,j}x_2+\cdots+\lambda_{m,j}x_m
\]
for $j=1,\ldots,n$. Thus $\Theta$ (as an ideal either in
$\Ro[x_1,\ldots,x_m]$ or $\Ro[K]$) is generated by the elements
$\theta_1,\ldots,\theta_n$. In particular, if $\lambda$ is
integer-valued, then $\Theta=(\theta_1,\ldots,\theta_n)$ may be
considered as a well-defined ideal in $\Zo[K]$ or
$\Zo[x_1,\ldots,x_m]$.

It is known that the Krull dimension of $\Ro[K]$ equals $\dim K+1
= n$ (see e.g. \cite{St}), and $\theta_1,\ldots,\theta_n$ is a
linear system of parameters in $\Ro[K]$ for any characteristic
function $\lambda$ and every choice of a basis in $V$ (e.g.
\cite[Lm.3.3.2]{BPnew}). Thus $\Ro[K]/\Theta$ has Krull dimension
$0$, which in our case is equivalent to saying that
$\Ro[K]/\Theta$ is a finite-dimensional vector space. Moreover, it
is known (see e.g. \cite[Lm.8.1]{HM} or \cite[Lm.3.5]{AyV3}) that
the classes of monomials $x_I=x_{i_1}\cdot\ldots\cdot x_{i_k}$
taken for each simplex $I=\{i_1,\ldots,i_k\}\in K$ linearly span
$\Ro[K]/\Theta$ (however there exist relations on these classes!).

We introduce the following notation to make the exposition
consistent with that of \cite{HM}:
\begin{equation}
H_T^*(\Delta;\ko):=\ko[K],\qquad H^*(\Delta;\ko):=\ko[K]/\Theta,
\end{equation}
and, for short, $H^*_T(\Delta):=H^*_T(\Delta;\Ro)$ and
$H^*(\Delta):=H^*(\Delta;\Ro)$.

%
%

\subsection{Evaluation on fundamental
class}\label{subsecIntegration}

Let $x=x_{i_1}^{j_1}\cdot\ldots\cdot x_{i_k}^{j_k}$ be a monomial
whose index set $\{i_1,\ldots,i_k\}$ is not a simplex of $K$. Then
$\iota_I(x)=0$ for any $I\in K\indn$, according to
\eqref{eqIotaVanish}. Therefore $\pi_!^\Delta(x)=0$. Hence
$\pi_!^\Delta$ vanishes on the Stanley--Reisner ideal $I_{SR}$ and
descends to the map
\[
\pi_!^\Delta\colon \Ro[K]\to \Ro[V^*].
\]
Since $\pi_!^\Delta$ is a map of $\Ro[V^*]$-modules, we may apply
$\otimes_{\Ro[V^*]}\Ro$ to $\pi_!^\Delta$. This gives a linear map
\[
\int_\Delta\colon H^{2n}(\Delta)\to \Ro\cong\Ro[V^*]/\Ro[V^*]^+
\]

\begin{defin}
Let $\Delta=(K,w,\lambda)$ be a complete simplicial multi-fan. The
map $\int_\Delta\colon H^{2n}(\Delta)\to \Ro$ is called
``\emph{the evaluation on the fundamental class of $\Delta$}''.
\end{defin}

We denote the composite map $\Ro[x_1,\ldots,x_m]\twoheadrightarrow
H^{2n}(\Delta)\to\Ro$ by $\int_{\Delta,\Ro[m]}$.

%
%

\subsection{Chern class of a
multi-polytope}\label{subsecChernClassMPoly}

Let $P$ be a multi-polytope based on a complete simplicial
multi-fan $\Delta=(K,w,\lambda)$ of dimension $n$, and let
$c_1,\ldots,c_m\in \Ro$ be the support parameters of $P$. The
element
\[
c_1(P):=c_1x_1+\cdots+c_mx_m\in H^2(\Delta)
\]
is called \emph{the first Chern class of $P$}.

\begin{prop}\label{propVolFormula}
\begin{equation}\label{eqVolFormula}
\vol(P)=\frac{1}{n!}\int_{\Delta}c_1(P)^n.
\end{equation}
\end{prop}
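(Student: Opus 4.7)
The plan is to identify both sides of \eqref{eqVolFormula} by computing a common regularization. Fix a vector $v\in V$ that is generic with respect to $\Delta$ and set
\[
L(v) := \int_{V^*}\Dh_P(u)\, e^{-\langle u,v\rangle}\, d\mu.
\]
Since $\Delta$ is complete, $\Dh_P$ is compactly supported (Subsection~\ref{subsecDHfunc}), so $L$ extends to a real-analytic function in a neighbourhood of the origin and $L(0)=\vol(P)$ by \eqref{eqDefVolMPoly}.

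First I would evaluate $L(v)$ termwise from the definition $\Dh_P=\sum_{I\in K\indn}(-1)^I w(I)\phi_I$. For each $I$, the simplicial cone $C^*(I)^+$ has apex $H_I$ and edges $(u_i^I)^+$ satisfying $\langle (u_i^I)^+, v\rangle>0$; the standard change of variables $u = H_I + \sum_{i\in I} t_i (u_i^I)^+$ (Jacobian $1/|\det \lambda_I|$) yields
\[
\int_{V^*}\phi_I(u)\, e^{-\langle u,v\rangle}\, d\mu = \frac{e^{-\langle H_I,v\rangle}}{|\det \lambda_I|\prod_{i\in I}\langle (u_i^I)^+,v\rangle}.
\]
Using $\langle (u_i^I)^+,v\rangle = \sgn\langle u_i^I,v\rangle\cdot\langle u_i^I,v\rangle$ together with the definition $(-1)^I=(-1)^{\#\{i:\langle u_i^I,v\rangle>0\}}$, the sign bookkeeping collapses $(-1)^I\prod_i\sgn\langle u_i^I,v\rangle$ to the uniform factor $(-1)^n$, so
\[
L(v) = (-1)^n\sum_{I\in K\indn}\frac{w(I)\, e^{-\langle H_I,v\rangle}}{|\det \lambda_I|\prod_{i\in I}\langle u_i^I,v\rangle}.
\]

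Next I would recognize the right-hand side as the value at $v$ of an expression involving $\pi_!^\Delta$. The vertex $H_I\in V^*$ is the unique solution of $\langle H_I,\lambda(j)\rangle=c_j$ for $j\in I$, hence $H_I = \sum_{i\in I}c_i\, u_i^I = \iota_I(c_1(P))$. Expanding the exponentials as $e^{-\langle H_I,v\rangle}=\sum_{k\geqslant 0}(-1)^k\langle H_I,v\rangle^k/k!$ and swapping summations produces
\[
L(v) = (-1)^n\sum_{k\geqslant 0}\frac{(-1)^k}{k!}\biggl(\sum_{I\in K\indn}\frac{w(I)\langle H_I,v\rangle^k}{|\det \lambda_I|\prod_{i\in I}\langle u_i^I,v\rangle}\biggr) = (-1)^n\sum_{k\geqslant 0}\frac{(-1)^k}{k!}\,\pi_!^\Delta\bigl(c_1(P)^k\bigr)(v),
\]
where the second equality is precisely \eqref{eqDefIndexMap} applied to $c_1(P)^k=(\sum c_i x_i)^k$.

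Finally I would invoke Theorem~\ref{thmIndexMap}: completeness of $\Delta$ forces $\pi_!^\Delta(c_1(P)^k)\in \Ro[V^*]$ to be a genuine polynomial of $V$-degree $k-n$. Thus it vanishes identically for $k<n$, is the constant $\int_\Delta c_1(P)^n$ for $k=n$, and is a polynomial vanishing at the origin for $k>n$. Letting $v\to 0$ kills all terms except $k=n$, giving
\[
\vol(P) = L(0) = (-1)^n\cdot\frac{(-1)^n}{n!}\,\pi_!^\Delta\bigl(c_1(P)^n\bigr) = \frac{1}{n!}\int_\Delta c_1(P)^n.
\]
The principal obstacle is the analytic justification of the interchange: each summand over $I$ has a pole at $v=0$ coming from $\prod_i\langle u_i^I,v\rangle$, and only the cancellation guaranteed by Theorem~\ref{thmIndexMap} renders the $k$-th bracket polynomial. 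I would bypass this by performing the regrouping over $I$ first (for each fixed $k$, obtaining a polynomial) and only afterwards expanding the exponential and sending $v\to 0$, so that no singular series is ever manipulated directly.
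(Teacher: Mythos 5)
Your argument is correct, but it follows a genuinely different route from the paper. The paper disposes of Proposition \ref{propVolFormula} by citing the integral case from Hattori--Masuda \cite[Lm.~8.6]{HM} and then reducing: rational $\lambda$ via a refined lattice, rational $w$ via rescaling, and real data by continuity. You instead give a direct, self-contained computation via the Laplace transform $L(v)=\int_{V^*}\Dh_P(u)e^{-\langle u,v\rangle}d\mu$, which works uniformly for real $\lambda$ and $w$ and so bypasses the rational-approximation step entirely; in spirit it reproves the cited lemma of \cite{HM} rather than quoting it. The computation checks out: the Jacobian of $u=H_I+\sum_i t_i(u_i^I)^+$ is indeed $1/|\det\lambda_I|$ because $\{u_i^I\}$ is dual to $\{\lambda(i)\}_{i\in I}$; the sign identity $(-1)^I\prod_{i\in I}\sgn\langle u_i^I,v\rangle=(-1)^n$ is correct since the exponents count positive and negative pairings whose total is $n$; and $\iota_I(c_1(P))=\sum_{i\in I}c_iu_i^I=H_I$ identifies the bracketed sum for each $k$ with $\pi_!^\Delta(c_1(P)^k)(v)$ via \eqref{eqDefIndexMap}. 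Two points deserve explicit care. First, the generic $v$ used to define the cones $C^*(I)^+$ must be the same $v$ (or lie in the same chamber) as the Laplace variable, so that each $\int\phi_Ie^{-\langle u,v\rangle}d\mu$ converges; you implicitly do this, and completeness guarantees the decomposition of $\Dh_P$ is independent of the chamber, so the resulting meromorphic formula for $L(v)$ is global. Second, for the interchange of sums the cleanest rigorous version of your fix is to restrict to a ray: for fixed generic $v$ the function $t\mapsto L(tv)$ is entire, its Laurent expansion at $t=0$ is $\sum_k(-1)^n\frac{(-1)^k}{k!}t^{k-n}\pi_!^\Delta(c_1(P)^k)(v)$, so the negative-power coefficients vanish and the constant term equals $L(0)=\vol(P)$; Theorem \ref{thmIndexMap} then identifies that constant term as $\frac1{n!}\pi_!^\Delta(c_1(P)^n)=\frac1{n!}\int_\Delta c_1(P)^n$ since $\pi_!^\Delta(c_1(P)^n)$ is a genuine degree-zero polynomial, i.e.\ a constant independent of $v$. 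With that phrasing the proof is complete; what it buys over the paper's proof is self-containedness and a uniform treatment of irrational data, at the cost of redoing the analytic localization that \cite{HM} already supplies.
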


\begin{proof}
If $\lambda$ and $w$ are integral, the statement is proved in
\cite[Lm.8.6]{HM}. The rational case follows from the integral
case by the following arguments. (1) In the rational case we may
choose a refined lattice such that $\lambda$ becomes integral with
respect to this lattice (this would change the euclidean measure
on $V^*$, but this change affects both sides of
\eqref{eqVolFormula} in the same way). (2) A rational weight $w$
may be turned into an integral weight by rescaling (both sides of
\eqref{eqVolFormula} depend linearly on $w$, thus rescaling of $w$
preserves \eqref{eqVolFormula}). Real case follows by continuity,
since both sides of \eqref{eqVolFormula} depend continuously on
$\lambda$ and $w$.
\end{proof}

It is easily seen that, for a given $\Delta$, the expression on
the right hand side of \eqref{eqVolFormula} is a homogeneous
polynomial of degree $n$ in the variables $c_1,\ldots,c_m$:
\[
V_\Delta(c_1,\ldots,c_m)=\frac{1}{n!}\int_{\Delta}(c_1x_1+\cdots+c_mx_m)^n.
\]
Thus Proposition \ref{propVolFormula} implies Theorem
\ref{thmVolPoly}.

%
%
%
%
%
%

\section{Basic properties of volume
polynomials}\label{secBasicPropVolPoly}

\subsection{Partial derivatives of volume
polynomial}\label{subsecDerivsVolPoly}

We continue to assume that there is a fixed inner product in $V$
which makes the integral lattice in $V$ unnecessary. The inner
product allows to identify $V$ and $V^*$ and to introduce a
measure on each affine subspace of $V$ or $V^*$. Consider the
space $\Lambda^kV$ of exterior forms on $V$. Given an inner
product in $V$ we obtain an inner product on $\Lambda^kV$.

Suppose that every simplex $I\in K$ is oriented somehow. For a
characteristic function $\lambda\colon [m]\to V$ on $K$ and
$I=\{i_1,\ldots,i_k\}\in K$ let $\lambda(I)$ denote the skew form
$\lambda(i_1)\wedge\cdots\wedge\lambda(i_k)\in \Lambda^kV$, where
$(i_1,\ldots,i_k)$ is the positive order of vertices of $I$.
Denote the norm of $\lambda(I)$ by $\covol(I)$:
\[
\covol(I) := \|\lambda(I)\| =
\|\lambda(i_1)\wedge\cdots\wedge\lambda(i_k)\|.
\]

Recall from Section \ref{secDefMultipoly} the notion of a face of
a multi-polytope. If $P$ is a multi-polytope of dimension $n$ and
$I\in K$ then $F_I$ is a multi-polytope of dimension $n-|I|$
sitting in the affine subspace $H_I\subset V^*$. There is a
measure on $H_I$ determined by the inner product, hence we may
define the volume of $F_I$. The following lemma shows that we can
compute the volumes of faces from the volume polynomial.

\begin{lemma}[{cf.\cite[Thm.2.4.3]{Tim}}]\label{lemDerivsOfVolPol}
Let $J\subset [m]$. Consider the homogeneous polynomial
$\dd_JV_\Delta$ of degree $n-|J|$. Then
\begin{enumerate}
\item Let $\theta_u$ denote the linear differential operator
$\sum_{i=1}^m\langle u, \lambda(i)\rangle \dd_i$ for $u\in V^*$.
Then $\theta_uV_\Delta=0$.

\item If $J\notin K$, then $\dd_JV_\Delta=0$;

\item If $J\in K$, then the value of the polynomial $\dd_JV_\Delta$ at a point
$(\tilde{c}_1,\ldots,\tilde{c}_m)\in \Ro^m$ is equal to
\begin{equation}
\dfrac{\vol F_J}{\covol(J)}
\end{equation}
when $|J|<n$ and
\begin{equation}
\dfrac{w(J)}{\covol(J)}=\dfrac{w(J)}{|\det \lambda_J|}
\end{equation}
when $|J|=n$. Here $\tilde{c}_i$ are the support parameters of a
multi-polytope $P$ and $F_J$ are its faces.
\end{enumerate}
\end{lemma}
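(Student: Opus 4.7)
The plan is to differentiate the identity
\[
V_\Delta(c) \;=\; \frac{1}{n!}\int_\Delta c_1(P)^n, \qquad c_1(P)=c_1x_1+\cdots+c_mx_m
\]
of Proposition~\ref{propVolFormula}. Iterated differentiation and $\Ro$-linearity of $\int_\Delta$ give the moment formula
\[
\dd_J V_\Delta(c) \;=\; \frac{1}{(n-|J|)!}\int_\Delta x_J\,c_1(P)^{n-|J|}, \qquad x_J:=\prod_{j\in J}x_j,
\]
with the integrand in $H^{2n}(\Delta)=\bigl(\Ro[K]/\Theta\bigr)_{2n}$. Parts (1) and (2) are then immediate: in (1) the degree-$2$ factor is $\sum_i\langle u,\lambda(i)\rangle x_i=\lambda^\top(u)\in\Theta$, which is zero in $H^*(\Delta)$; in (2), $J\notin K$ places $x_J$ in the Stanley--Reisner ideal $I_{SR}$, again zero in $H^*(\Delta)$.

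For (3) the key step is to compare the index map of $\Delta$ with that of the projected multi-fan $\Delta_J$. Fix any base point $u_0\in H_J$; then the support parameter of $F_J$ at a vertex $i\in\lk_K J$ equals $c_{J,i}=c_i-\langle u_0,\lambda(i)\rangle$, and a direct expansion in the dual basis $\{u^I_j\}_{j\in I}$ yields the shift identity
\[
\iota_I(c_1(P)) \;=\; \iota_{I\setminus J}(c_1(F_J))+u_0 \qquad \text{in }V^*,
\]
valid for every $I\in K\indn$ with $I\supset J$. Expanding $(\iota_{I\setminus J}(c_1(F_J))+u_0)^{n-|J|}$ by the binomial theorem, summing over $I\supset J$ via \eqref{eqDefIndexMap}, and using the covolume factorization $|\det\lambda_I|=\covol(J)\cdot|\det\lambda_{J,I\setminus J}|$ gives
\[
\pi_!^\Delta(x_J\,c_1(P)^{n-|J|}) \;=\; \frac{1}{\covol(J)}\sum_{k=0}^{n-|J|}\binom{n-|J|}{k}u_0^{n-|J|-k}\,\pi_!^{\Delta_J}(c_1(F_J)^k).
\]

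By completeness of $\Delta_J$ and Theorem~\ref{thmIndexMap}, $\pi_!^{\Delta_J}(c_1(F_J)^k)$ is a polynomial on $V_J^*$ of formal degree $2k-2(n-|J|)$, hence vanishes whenever $k<n-|J|$; only the top term $k=n-|J|$ survives. Combining this with the moment formula and Proposition~\ref{propVolFormula} applied to $F_J$ inside $\Delta_J$ yields $\dd_J V_\Delta(c)=\vol(F_J)/\covol(J)$ when $|J|<n$. When $|J|=n$, only the summand $I=J$ contributes in \eqref{eqDefIndexMap}, collapsing directly to $\dd_J V_\Delta=\pi_!^\Delta(x_J)=w(J)/|\det\lambda_J|=w(J)/\covol(J)$.

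The main obstacle is establishing the shift identity. It requires the correct identification of $V_J^*$ with the orthogonal complement $\langle\lambda(j)\rangle_{j\in J}^{\perp}\subset V^*$ and careful bookkeeping of how the support parameters of $F_J$ transform under passage to a face. Once this identity and the covolume factorization are in place, completeness of $\Delta_J$ automatically annihilates all the lower-order binomial terms, leaving precisely the desired face-volume contribution.
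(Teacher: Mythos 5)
Your proposal is correct, and parts (1), (2) and the case $|J|=n$ of (3) coincide with the paper's argument verbatim. For the case $|J|<n$ the overall architecture is also the same --- reduce to the projected multi-fan $\Delta_J$ via the index map and the covolume factorization $|\det\lambda_I|=\covol(J)\cdot|\det(\lambda_J)_{I\setminus J}|$ --- but the mechanism is genuinely different. The paper constructs a restriction ring homomorphism $\varphi_J\colon H^*(\Delta)\to H^*(\Delta_J)$ (sending $x_j$ for $j\in J$ to $-\sum_i p^J_{i,j}x_i$ built from orthogonal projection coefficients), and then verifies three claims: that $\varphi_J$ is well defined, that $\varphi_J(c_1(P))=c_1(F_J)$, and that $\int_\Delta y\,x_J=\covol(J)^{-1}\int_{\Delta_J}\varphi_J(y)$; the translation from the affine subspace $H_J$ to the linear space $V_J^*$ is thus absorbed into the substitution defining $\varphi_J$. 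You instead perform that translation at the level of vertices, via the shift identity $\iota_I(c_1(P))=\iota_{I\setminus J}(c_1(F_J))+u_0$ (which is just the statement that the vertex $H_I$ of $P$ and the corresponding vertex of $F_J$ differ by the fixed base point $u_0\in H_J$, once $V_J^*$ is identified with $\mathrm{Ann}\langle\lambda(j)\rangle_{j\in J}\subset V^*$ and one checks $u_i^I=u_i^{I\setminus J}$ for $i\notin J$), and you then kill the lower binomial terms by the observation that $\pi_!^{\Delta_J}(c_1(F_J)^k)$ is a polynomial of negative formal degree for $k<n-|J|$, hence zero. This buys you a shorter path --- no need to verify that $\varphi_J$ respects the Stanley--Reisner and linear relations --- at the cost of invoking completeness of $\Delta_J$ (which does follow from completeness of $\Delta$ and the hereditary property of projections) together with Theorem~\ref{thmIndexMap} to annihilate the cross terms; the paper's route establishes the more general and reusable identity $\int_\Delta y\,x_J=\covol(J)^{-1}\int_{\Delta_J}\varphi_J(y)$ for arbitrary $y$, which is what gets reused later (e.g.\ in Corollary~\ref{corDerivsDerivs} and Section~\ref{secMinkRels}). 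Both arguments are sound.
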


\begin{proof}
(1) We have
\[\begin{split}
\theta_uV_\Delta&=\frac{1}{n!}\int\Big(\sum_{i=1}^m\langle
u,\lambda(i)\rangle \frac{\dd}{\dd
c_i}\Big)(c_1x_1+\cdots+c_mx_m)^n\\&=\frac{1}{(n-1)!}
\int\Big(\sum_{i=1}^m\langle u, \lambda(i)\rangle
x_i\Big)\cdot(c_1x_1+\cdots+c_mx_m)^{n-1}=0,
\end{split}
\]
since $\sum_{i=1}^m\langle u, \lambda(i)\rangle x_i=0$ in
$H^*(\Delta)$.

(2) The proof of second statement is completely similar to (1). We
have
\[\begin{split}
\dd_JV_\Delta&=\frac{1}{n!}\int\Big(\prod_{i\in J}\frac{\dd}{\dd
c_i}\Big)(c_1x_1+\cdots+c_mx_m)^n\\&=\frac{1}{(n-|J|)!}
\int(\prod_{i\in J}x_i)\cdot(c_1x_1+\cdots+c_mx_m)^{n-|J|}=0,
\end{split}
\]
since $x_J=\prod_{i\in J}x_i=0$ in $H^*(\Delta)$ for $J\notin K$.

(3) The second claim requires some technical work. At first, let
$|J|=n$, i.e. $J\in K\indn$. We have
\[
\dd_JV_\Delta=\dd_J\frac{1}{n!}\int_\Delta(c_1x_1+\cdots+c_mx_m)^n=\int_\Delta
x_J=\pi_!^\Delta(x_J),
\]
where $x_J=\prod_{i\in J}x_i$. By the definition of the index map
\eqref{eqDefIndexMap} we have
\[
\pi_!^\Delta(x_J)=\sum_{I\in
K\indn}\dfrac{w(I)\iota_I(x_J)}{|\det\lambda_I|\prod_{i\in
I}\iota_I(x_i)}.
\]
If $I\neq J$, the corresponding summand vanishes, since
$\iota_I(x_j)=0$ for $j\notin I$ by~\eqref{eqIotaVanish}. The
summand corresponding to $I=J$ contributes
$\frac{w(J)}{|\det\lambda_{J}|}$ which proves the statement.

Let us prove the case $|J|<n$. Recall that the projected multi-fan
$\Delta_J=(\lk_KJ,w_J,\lambda_J)$ is the multi-fan in the vector
space $V_J=V/\langle \lambda(j)\mid j\in J\rangle$. There exists a
``restriction" map
\[
\varphi_J\colon H^*(\Delta)\to H^*(\Delta_J),
\]
defined as follows:
\[
\varphi_J(x_j)=\begin{cases} x_j,\mbox{ if }j\in \lk_KJ;\\
-\sum_{i\in \lk_KJ}p^{J}_{i,j}x_i, \mbox{ if }j\in J;\\
0,\mbox{ otherwise}
\end{cases}
\]
Here the constants $p^{J}_{i,j}$ for $j\in J$ and $i\in \lk_KJ$
are defined by
\begin{equation}\label{eqPconsts}
\proj_{J}\lambda(i)=\sum_{j\in J}p^{J}_{i,j}\lambda(j),
\end{equation}
where $\proj_{J}\lambda(i)$ is the orthogonal projection of the
vector $\lambda(i)$ to the linear subspace spanned by $\lambda(j)$
$(j\in J)$.

The homomorphism $\varphi_J$ is now defined on the level of
polynomial algebras.

\begin{claim}
$\varphi_J$ is a well-defined ring homomorphism from
$H^*(\Delta)=\Ro[K]/\Theta$ to
$H^*(\Delta_J)=\Ro[\lk_KJ]/\Theta_J$.
\end{claim}

\begin{proof}
The proof is a routine check. First let us prove that
Stanley--Reisner relations in $\Delta$ are mapped to the
Stanley--Reisner ideal of $\Delta_J$. Let $I$ be a non-simplex of
$K$. The definition of $\varphi_J$ implies that $\varphi_J(x_I)=0$
unless $I\subset J\cup\ver(\lk_KJ)$. If $I\subset
J\cup\ver(\lk_KJ)$, we have that $I\cap \ver(\lk_KJ)$ is a
non-simplex of $\lk_KJ$ (otherwise we would have $I\in K$
contradicting the assumption). Then the element
$\varphi_J(x_I)=\varphi_J\big(\prod_{i\in I\cap
\ver(\lk_KJ)}x_i\big)\cdot \varphi_J\big(\prod_{i\in I\cap
J}x_i\big)=\prod_{i\in I\cap \ver(\lk_KJ)}x_i\cdot
\varphi_J\big(\prod_{i\in I\cap J}x_i\big)$ lies in the
Stanley--Reisner ideal of $\lk_KJ$.

Let us check that linear relations in $H^*(\Delta)$ are mapped
into linear relations of $H^*(\Delta_J)$. A general linear
relation in $H^*(\Delta)$ has the form $\sum_{i\in[m]}\langle u,
\lambda(i)\rangle x_i$ for some $u\in V^*$. The map $\varphi_J$
sends it to the element
\[
\begin{split}
\sum_{i\in \ver(\lk_KJ)}&\Big(\langle u,
\lambda(i)\rangle-\sum_{j\in J}p^J_{i,j}\langle u, \lambda(j)\rangle\Big)x_i\\
&= \sum_{i\in \ver(\lk_KJ)}\Big\langle u, \lambda(i)-\sum_{j\in
J}p^J_{i,j}\lambda(j)\Big\rangle x_i=\sum_{i\in
\ver(\lk_KJ)}\langle u, \lambda_J(i)\rangle x_i.
\end{split}
\]
(note that $\lambda(i)-\sum_{j\in J}p^J_{i,j}\lambda(j) =
\lambda(i)-\proj_J\lambda(i)=\lambda_J(i)$ is the projection of
$\lambda(i)$ to the plane orthogonal to
$\langle\lambda(j)\rangle_{j\in J}$). The last expression is zero
in $H^*(\Delta_J)$.
\end{proof}

Next we show that restriction homomorphism is compatible with the
first Chern classes of the multi-polytopes.

\begin{claim}\label{claimCherns}
$\varphi_J$ sends $c_1(P)$ to $c_1(F_J)$.
\end{claim}

\begin{proof}
Recall that $H_J$ denotes the ambient space of the face $F_J$ of
the multi-polytope $P$. The supporting hyperplanes of $F_J$ are
given by intersections $H_J\cap H_i$, where $H_i$ is the
supporting hyperplane of $P$ for $i\in \lk_KJ$.

Let us denote by $U_J$ the subspace spanned by $\lambda(j)$'s
$(j\in J)$ so that $V_J=V/U_J$. By the definition (see subsection
\ref{subsecCompleteMFans}), $\lambda_J(i)$ is the projection image
of $\lambda(i)$ on $V_J$ if $i$ is the vertex of $\lk_KJ$. As in
the proof of previous claim we identify the quotient space
$V_J=V/U_J$ with the orthogonal complement $U_J^\bot$ of $U_J$.
The projected vector $\lambda_J(i)$ can be considered as the
element in $V$ and we have
\begin{equation}\label{eqOrthog}
\lambda(i)=\lambda_J(i)+\proj_J\lambda(i)
\end{equation}
with respect to the orthogonal decomposition $V=U_J^\bot\oplus
U_J$.

The affine hyperplane $H_i$ is given by $\{u\in V^*\mid \langle u,
\lambda(i)\rangle=c_i\}$. The affine plane $H_J$ is given by
$\{u\in V^*\mid \langle u, \lambda(j)\rangle=c_j,$ for all $j\in
J\}$. By using \eqref{eqPconsts} and \eqref{eqOrthog} we may write
the intersection $H_i\cap H_J$ as

\[
\begin{split}
\{u\in H_J\mid \langle u,
\lambda_J(i)+\proj_J\lambda(i)\rangle=c_i\}&=\Big\{u\in H_J \mid
\Big\langle u, \lambda_J(i)+\sum_{j\in
J}p^J_{i,j}\lambda(j)\Big\rangle=c_i\Big\}\\&= \Big\{u\in H_J\mid
\langle u, \lambda_J(i)\rangle=c_i-\sum_{j\in
J}p^J_{i,j}c_j\Big\}.
\end{split}
\]

Therefore the $i$-th support parameter of $F_J$ is $c_i-\sum_{j\in
J}p^J_{i,j}c_j$ for $i\in \lk_KJ$. Now it remains to note that the
coefficient of $x_i$ in the projected class $\varphi_J(c_1(P))$ is
exactly $c_i-\sum_{j\in J}p^J_{i,j}c_j$. Thus
$\varphi_J(c_1(P))=c_1(F_J)$.
\end{proof}

Now we prove the following

\begin{claim} \label{claimRestricHomo}
\[
\int_\Delta y\prod_{j\in
J}x_j=\frac{1}{\covol(J)}\int_{\Delta_J}\varphi_J(y)
\quad\text{for any $y\in H^*(\Delta)$.}
\]
\end{claim}

\begin{proof}
Let us denote by $\vol S$ the volume of the parallelepiped formed
by a set of vectors $S$. Then $\covol(J)=\vol\{\lambda(i)\}_{i\in
J}$ and the index map can be written as
\begin{equation} \label{eqIndexMapPpiped}
\pi_!^\Delta(x)=\sum_{I\in
K\indn}\frac{w(I)\iota_I(x)}{\vol\{\lambda(i)\}_{i\in
I}\prod_{i\in I}\iota_I(x_i)}.
\end{equation}
Let $\tilde{I}\in\lk_KJ$ and, therefore, $\tilde{I}\sqcup J\in K$.
Then
\[
\vol\{\lambda(i)\}_{i\in \tilde{I}\sqcup
J}=\vol\{\lambda(i)\}_{i\in J}\cdot\vol\{\lambda_J(i)\}_{i\in
\tilde{I}} =\covol(J)\cdot\vol\{\lambda_J(i)\}_{i\in \tilde{I}}.
\]
This together with \eqref{eqIndexMapPpiped} implies the lemma.
\end{proof}

Applying claim~\ref{claimRestricHomo} to $y=c_1(P)^{n-|J|}$ and
using claim~\ref{claimCherns}, we obtain
\[
\dd_JV_\Delta=\frac{1}{(n-|J|)!}\int_\Delta
c_1(P)^{n-|J|}\prod_{j\in J} x_j =
\frac{1}{\covol(J)}\frac{1}{(n-|J|)!}\int_{\Delta_J}c_1(F_J)^{n-|J|}.
\]
Expression at the right evaluates to $\frac{\vol(F_J)}{\covol(J)}$
which finishes the proof of Lemma~\ref{lemDerivsOfVolPol}.
\end{proof}

\begin{cor}\label{corOmegaInjective}
Let $\Delta=(w_{ch},\lambda)$ be a complete multi-fan. Then
$V_\Delta=0$ implies $w_{ch}=0$.
\end{cor}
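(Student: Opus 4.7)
The plan is to deduce this directly from Lemma~\ref{lemDerivsOfVolPol}(3), which computes the iterated partial derivative $\partial_J V_\Delta$ at any point when $|J|=n$. The key observation is that the lemma gives a \emph{constant} value for such top-order derivatives, namely $w(J)/|\det \lambda_J|$ when $J\in K$, while the assumption $V_\Delta = 0$ forces every partial derivative to vanish as a polynomial (hence as a constant).

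In detail, I would argue as follows. Fix any $n$-subset $J\subset [m]$. If $J\notin K$, then by the definition of $K$ as the support of $w$, we already have $w(J)=0$. If $J\in K^{\langle n\rangle}$, then Lemma~\ref{lemDerivsOfVolPol}(3) gives
\[
\partial_J V_\Delta \;=\; \frac{w(J)}{|\det \lambda_J|},
\]
which is a constant (the derivative of a degree-$n$ polynomial with respect to $n$ variables). Since $V_\Delta \equiv 0$, this constant must be $0$. By the $*$-condition, $\{\lambda(i)\}_{i\in J}$ is a basis of $V$, so $|\det \lambda_J|\neq 0$, forcing $w(J)=0$.

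Combining the two cases, $w(J)=0$ for every $n$-subset $J\subset [m]$. Therefore
\[
w_{ch} \;=\; \sum_{|J|=n} w(J)\, J \;=\; 0.
\]

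There is essentially no obstacle here: the statement is an immediate corollary of part (3) of the preceding lemma, and the only content is the remark that the top iterated derivatives of $V_\Delta$ recover exactly the weights $w(J)$ on maximal simplices (up to the nonzero normalizing factors $|\det \lambda_J|$).
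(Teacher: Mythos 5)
Your proof is correct and follows exactly the paper's own argument: the paper's proof of this corollary likewise just observes that $V_\Delta=0$ forces $\dd_JV_\Delta=0$ for every $J\in K\indn$, and then invokes Lemma~\ref{lemDerivsOfVolPol}(3) to conclude $w(J)=0$. Your explicit handling of the case $J\notin K$ (where $w(J)=0$ by definition of $K$ as the support of $w$) is a harmless elaboration of the same reasoning.
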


\begin{proof}
If $V_\Delta=0$, then $\dd_JV_\Delta=0$ for any $J\in K\indn$.
This implies $w_{ch}=0$.
\end{proof}

\begin{rem}
Of course, according to Proposition \ref{propPDdescription} the
polynomial $V_\Delta$ is non-zero if and only if the map
$\int_\Delta$ is non-zero. The fact that $\int_\Delta$ is non-zero
for every non-zero $w_{ch}$ is proved by applying this map to all
monomials $x_I$, $I\in K\indn$ (recall that these monomials span
$H^{2n}(\Delta)$). This procedure is essentially the same as
applying differential operators $\dd_I$ to $V_\Delta$.
\end{rem}

\begin{cor}\label{corDerivsDerivs}
Let $\dd_P$ denote the linear differential operator
$\sum_{i\in[m]} \tilde{c}_i\dd_i$ where $\tilde{c}_i$ are the
support parameters of a multi-polytope $P$. Then
\[
\frac{1}{n!}\dd_P^nV_\Delta=\vol(P),
\]
\[
\frac{1}{(n-|I|)!}\dd_P^{n-|I|}\dd_IV_\Delta =
\dfrac{\vol(F_I)}{\covol(I)}.
\]
\end{cor}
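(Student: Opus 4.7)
The plan is to derive the corollary as a direct consequence of Lemma~\ref{lemDerivsOfVolPol} together with the standard Euler-type identity for homogeneous polynomials: if $f\in\Ro[c_1,\ldots,c_m]$ is homogeneous of degree $d$, then
\[
\Big(\sum_{i=1}^m \tilde{c}_i\dd_i\Big)^{d} f \;=\; d!\cdot f(\tilde{c}_1,\ldots,\tilde{c}_m).
\]
This is verified on a monomial $c^{a}=c_1^{a_1}\cdots c_m^{a_m}$ with $|a|=d$: expanding $\dd_P^{d}$ by the multinomial formula, only the term with $(b_1,\ldots,b_m)=(a_1,\ldots,a_m)$ survives, contributing $\binom{d}{a_1,\ldots,a_m}\tilde{c}^{a}\cdot a_1!\cdots a_m! = d!\,\tilde{c}^{a}$.

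First I would apply this identity with $f=V_\Delta$, which is homogeneous of degree $n$ by Theorem~\ref{thmVolPoly}. This yields $\dd_P^{n}V_\Delta = n!\,V_\Delta(\tilde{c}_1,\ldots,\tilde{c}_m)$, and Theorem~\ref{thmVolPoly} identifies the right-hand side with $n!\cdot\vol(P)$, giving the first displayed formula after division by $n!$.

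For the second formula, I would note that $\dd_I V_\Delta$ is a homogeneous polynomial of degree $n-|I|$ (and in particular vanishes identically when $I\notin K$, by Lemma~\ref{lemDerivsOfVolPol}(2), matching the convention that $F_I$ is then empty). Applying the same Euler identity with $f=\dd_I V_\Delta$ and $d=n-|I|$ gives
\[
\dd_P^{n-|I|}\dd_I V_\Delta \;=\; (n-|I|)!\cdot (\dd_I V_\Delta)(\tilde{c}_1,\ldots,\tilde{c}_m).
\]
Finally, Lemma~\ref{lemDerivsOfVolPol}(3) evaluates the right-hand factor at the support parameters of $P$ as $\vol(F_J)/\covol(J)$ when $|J|<n$ (and as $w(J)/\covol(J)$ when $|J|=n$, which still fits the stated formula if one interprets a vertex $F_J$ as a single point of ``volume'' $w(J)$). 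Dividing by $(n-|I|)!$ yields the claimed equality.

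There is no substantial obstacle: the corollary is essentially bookkeeping that packages Lemma~\ref{lemDerivsOfVolPol} with the homogeneity of $V_\Delta$. The only point that requires a moment of care is the normalization, in particular keeping track of the factorials $n!$ and $(n-|I|)!$ coming from the Euler identity versus those built into the definition of $V_\Delta$ in Proposition~\ref{propVolFormula}.
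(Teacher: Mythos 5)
Your proof is correct and follows essentially the same route as the paper: the paper also deduces both formulas from Lemma~\ref{lemDerivsOfVolPol} combined with the observation that $\bigl(\sum_i \tilde{c}_i\dd_i\bigr)^k\Psi = k!\,\Psi(\tilde{c}_1,\ldots,\tilde{c}_m)$ for $\Psi$ homogeneous of degree $k$. Your extra care about the $|I|=n$ case (reading $\vol(F_I)$ as $w(I)$ for a vertex) is consistent with the paper's conventions.
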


\begin{proof}
Both formulas follow from Lemma \ref{lemDerivsOfVolPol} and a
simple observation: if $\Psi\in \Ro[c_1,\ldots,c_m]_k$ is a
homogeneous polynomial of degree $k$, then
\[
\Big(\sum\nolimits_{i\in[m]} \tilde{c}_i\dd_i\Big)^k\Psi =
k!\Psi(\tilde{c}_1,\ldots,\tilde{c}_m).
\]
(evaluation at a point coincides with the result of
differentiation up to $k!$).
\end{proof}

%
%
%

\subsection{Recovering multi-fans from volume polynomials}

When we associate a volume polynomial to a complete simplicial
multi-fan, the numbering of the one-dimensional cones by $[m]$ is
incorporated in the data of the multi-fan.  We call a multi-fan
with the numbering \emph{a based multi-fan}. Two based multi-fans
$\Delta$ and $\Delta'$ are said to be equivalent if there is an
automorphism of $V$ which induces an isomorphism between $\Delta$
and $\Delta'$ preserving the numbering. In the presence of a
lattice $N\subset V$ there should be an automorphism of the
lattice with this property. Equivalent complete simplicial based
multi-fans have the same volume polynomial. We will see that the
converse holds for complete simplicial based multi-fans $\Delta$
whose underlying simplicial complexes are oriented strongly
connected pseudo-manifolds. Strong connectedness of $K$ means that
for any two maximal simplices $I,I'\in K\indn$ there exists a
sequence of maximal simplices $I=I_0,I_1,\ldots,I_k=I'$ such that
$|I_s\cap I_{s+1}|=n-1$ for $0\leqslant s\leqslant k-1$.

We assume that the volume polynomial $V_\Delta$ associated to
$\Delta$ is non-zero. Then the class $[\Delta]$ is non-zero. Since
$K$ is assumed to be a pseudo-manifold, $w(I)\neq 0$ for any $I\in
K\indn$. Then Lemma \ref{lemDerivsOfVolPol} shows that $V_\Delta$
recovers $K$.

Remember that
\begin{equation} \label{eqLinRelInCohomol}
\sum_{i=1}^m\langle u,\lambda(i)\rangle x_i=0 \quad\text{ in
$H^*(\Delta)$ for any $u\in V^*$}.
\end{equation}
Let $J\in K$, $|J|=n-1$.  Since $K$ is assumed to be a
pseudo-manifold, there are exactly two elements $i_1$ and $i_2$ in
$[m]$ such that $J\cup\{i_1\}$ and $J\cup\{i_2\}$ are in $K\indn$.
Multiplying $x_J=\prod_{i\in J}x_i$ to the both sides in
\eqref{eqLinRelInCohomol}, we obtain
\[
\sum_{j\in J}\langle u,\lambda(j)\rangle x_jx_J+\langle
u,\lambda(i_1)\rangle x_{i_1}x_J+\langle u,\lambda(i_2)\rangle
x_{i_2}x_J=0 \quad \text{for all $u\in V^*$}.
\]
Applying $\int_\Delta$ to the above identity, we have
\[
\Big\langle u,  \sum_{j\in J}\big(\int_\Delta
x_jx_J\big)\lambda(j)+\big(\int_\Delta
x_{i_1}x_J\big)\lambda(i_1)+ \big(\int_\Delta
x_{i_2}x_J\big)\lambda(i_2)\Big\rangle=0.
\]
Since this holds for all $u\in V^*$, one can conclude
\begin{equation}\label{eqRecover}
\sum_{j\in J}\big(\int_\Delta
x_jx_J\big)\lambda(j)+\big(\int_\Delta
x_{i_1}x_J\big)\lambda(i_1)+ \big(\int_\Delta
x_{i_2}x_J\big)\lambda(i_2)=0.
\end{equation}
Note that the numbers $\int_\Delta x_{i_1}x_J$ and $\int_\Delta
x_{i_2}x_J$ are non-zero. Identity \eqref{eqRecover} shows that
once basis vectors $\{\lambda(i)\}_{i\in I}$ for some $I\in
K\indn$ are determined, then the other vectors $\lambda(k)$'s will
be determined by the intersection numbers
$\int_{\Delta}x_{\mathcal I}$ where $\mathcal I$ consists of
elements in $[m]$ with $|\mathcal I|=n$ (an element in $\mathcal
I$ may appear more than once). On the other hand, since
\[
V_\Delta=\frac{1}{n!}\int_\Delta (c_1x_1+\dots+c_mx_m)^n,
\]
the coefficient of $c^{\mathcal I}$ agrees with
$\int_{\Delta}x_{\mathcal I}$ up to some non-zero constant
independent of $\Delta$.  These show that $V_\Delta$ determines
$\Delta$ up to equivalence.

\begin{prop} \label{propVolPolyRigid}
Two complete simplicial toric varieties are isomorphic if and only
if their volume polynomials agree up to permutations of variables.
Here it is assumed that all $\lambda(i)$'s are the primitive
generators of the rays.
\end{prop}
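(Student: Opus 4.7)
The plan is to deduce the proposition from the recovery argument sketched immediately above the statement, applied now to toric data, and then to promote the resulting vector-space identification to a lattice isomorphism using the numerical coefficients of the volume polynomial. A complete simplicial toric variety $X$ produces a complete simplicial multi-fan $\Delta_X=(K,w,\lambda)$ in which $K$ is a simplicial sphere (hence an oriented strongly connected pseudo-manifold), $w\equiv 1$ on $K\indn$, and each $\lambda(i)$ is the primitive generator of the corresponding ray. By Lemma \ref{lemDerivsOfVolPol}(3), for each $I\in K\indn$ the derivative $\dd_IV_{\Delta_X}$ evaluates to $1/|\det\lambda_I|\neq 0$, so $V_{\Delta_X}\neq 0$ and the hypotheses of the recovery discussion are in force.

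The forward implication serves as a warm-up: a toric isomorphism $X\cong X'$ is induced by a lattice isomorphism $\phi\colon N\to N'$ carrying one fan onto the other. It permutes ray labels by some $\sigma\in S_m$ with $\phi(\lambda(i))=\lambda'(\sigma(i))$, and the dual $\phi^*$ preserves the measures on $V^*,(V')^*$ normalized by $N^*,(N')^*$; substituting into $V_\Delta=\tfrac{1}{n!}\int_\Delta(c_1x_1+\cdots+c_mx_m)^n$ yields the equality of volume polynomials up to $\sigma$.

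For the converse, after relabeling rays so that the permutation becomes trivial, I would rerun the recovery argument: Lemma \ref{lemDerivsOfVolPol}(2) identifies $K$ with $K'$; fixing a reference maximal simplex $I_0\in K\indn$ and declaring $\phi(\lambda(i))=\lambda'(i)$ for $i\in I_0$ gives an $\Ro$-linear isomorphism $\phi\colon V\to V'$; the intersection numbers $\int_\Delta x_J$, read off the coefficients of $V_\Delta$ via Lemma \ref{lemDerivsOfVolPol}(3), match their $\Delta'$-counterparts, and plugging them into \eqref{eqRecover} together with strong connectedness of $K$ propagates $\phi(\lambda(j))=\lambda'(j)$ to every ray by induction along a gallery of maximal simplices starting at $I_0$.

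The main obstacle is upgrading this $\Ro$-linear $\phi$ to an actual lattice isomorphism $N\to N'$, since in a non-smooth complete simplicial fan the primitive generators need not span the ambient lattice, so matching primitive generators alone is not automatic. The cure is that the coefficient of $\prod_{i\in I_0}c_i$ in $V_\Delta$ equals $1/|\det_N\lambda_{I_0}|$, so equality of volume polynomials also forces $[N:N_{I_0}]=[N':N'_{I_0}]$ for every maximal simplex, where $N_{I_0}$ denotes the sublattice generated by the primitive generators of $I_0$. Since every point of $N$ lies in some top-dimensional cone of the complete fan and its expression in the primitive basis of that cone is controlled by this index, the numerical data of $V_\Delta$ together with $\phi(\lambda(i))=\lambda'(i)$ on all rays pins down both lattices and forces $\phi(N)=N'$; this $\phi$ then descends to the desired toric isomorphism $X\cong X'$.
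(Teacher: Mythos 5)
Your overall route is the same as the paper's: the forward direction is routine, and the converse is the recovery argument of the preceding subsection (Lemma \ref{lemDerivsOfVolPol} recovers $K$ and the numbers $w(I)/|\det\lambda_I|$ from the coefficients of $V_\Delta$, and identity \eqref{eqRecover} propagates the $\lambda(j)$'s from a reference simplex along galleries), followed by the cited theorem of Berchtold that complete toric varieties are isomorphic if and only if their fans are. You are also right --- and more careful than the paper's two-line proof --- to flag that this recovery only produces an $\Ro$-linear $\phi\colon V\to V'$ with $\phi(\lambda(i))=\lambda'(i)$, and that one must still show $\phi(N)=N'$. In the non-singular case this is automatic, since $\{\lambda(i)\}_{i\in I_0}$ is then a $\Zo$-basis of $N$ and $\{\lambda'(i)\}_{i\in I_0}$ of $N'$; in the merely simplicial case it is a genuine issue.

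The problem is that your cure for this issue does not work: knowing all the primitive generators (as vectors of $V$, via $\phi$) together with all the indices $[N:N_I]$ read off from the coefficients of $V_\Delta$ does \emph{not} pin down $N$. Concretely, take the fan of $\Co P^1\times\Co P^1$ with rays $\pm e_1,\pm e_2$ and the two lattices $N=\Zo^2+\Zo(\tfrac15,\tfrac15)$ and $N'=\Zo^2+\Zo(\tfrac15,\tfrac25)$. In both lattices all four rays remain primitive, every maximal cone has $N_I=\Zo^2$ of index $5$, and the volume polynomial is $\tfrac15(c_1+c_3)(c_2+c_4)$ in both cases; yet no symmetry of the fan carries one lattice to the other (the subgroups $\langle(1,1)\rangle$ and $\langle(1,2)\rangle$ of $(\Zo/5)^2$ lie in different orbits of the dihedral group of order $8$), and the two varieties are genuinely non-isomorphic, having quotient singularities of types $\tfrac15(1,1)$ and $\tfrac15(1,2)$ respectively at their fixed points. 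So the sentence ``pins down both lattices and forces $\phi(N)=N'$'' is false, and the gap cannot be closed by any further use of the numerical data of $V_\Delta$, since that data is identical for the two fans. (The paper's own proof silently has the same gap: the recovery argument it invokes only ever produces a linear automorphism of $V$. The statement is safe if ``simplicial'' is strengthened to ``non-singular'', which is the case your gallery argument actually proves.)
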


\begin{proof}
This follows from the above observation and the fact that two
toric varieties are isomorphic if and only if their fans are
isomorphic \cite{Berc} \footnote{We are grateful to Ivan
Arzhantsev from whom we learned this fact}.
\end{proof}

%
%
%
%
%
%

\section{A formula for the volume
polynomial}\label{secFormulaVolPoly}

We say that the set $S$ of $n+1$ vectors in $V\cong\Ro^n$ is in
general position, if any $n$ of them are linearly independent. Any
such set determines a multi-fan whose underlying simplicial
complex is a boundary of a simplex $K=\dd \triangle_{[n+1]}$. The
weights of all maximal simplices are the same up to sign due to
closedness condition $dw_{ch}=0$. Thus without loss of generality
we may assume that all weights are $\pm1$ depending on the
orientations. We call such multi-fan \emph{an elementary
multi-fan} and denote it $\Delta^{el}(S)$.

\begin{lemma}\label{lemElementaryMultifan}
Let $\Delta$ be an elementary multi-fan determined by the vectors
$\lambda(1), \ldots, \lambda(n+1)\in V$. Let
$0\neq(\alpha_1,\ldots,\alpha_{n+1})\in \Ro^{n+1}$ be a nonzero
linear relation on these vectors, i.e.
$\sum_{i=1}^{n+1}\alpha_i\lambda(i)=0$. Then
\begin{equation}\label{eqPowerLinForm}
V_{\Delta}(c_1,\ldots,c_{n+1}) =
\const\cdot(\alpha_1c_1+\cdots+\alpha_{n+1}c_{n+1})^n.
\end{equation}
for some constant $\const$.
\end{lemma}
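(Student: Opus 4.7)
The plan is to exploit the linear relations on $V_\Delta$ coming from part (1) of Lemma~\ref{lemDerivsOfVolPol} and show that they force $V_\Delta$ to depend on a single linear combination of the $c_i$'s.

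First I would identify the space of first-order differential operators annihilating $V_\Delta$. By Lemma~\ref{lemDerivsOfVolPol}(1), for every $u\in V^*$ the operator
\[
\theta_u=\sum_{i=1}^{n+1}\langle u,\lambda(i)\rangle\,\dd_i
\]
annihilates $V_\Delta$. Since $\lambda(1),\ldots,\lambda(n+1)$ span $V$ (any $n$ of them are even a basis), the assignment $u\mapsto(\langle u,\lambda(i)\rangle)_{i=1}^{n+1}$ is an injective linear map $V^*\hookrightarrow\Ro^{n+1}$ of rank $n$. A tuple $(\beta_1,\ldots,\beta_{n+1})$ lies in its image exactly when it is orthogonal to every linear relation among the $\lambda(i)$'s, and since the general-position hypothesis makes the space of such relations one-dimensional and spanned by $(\alpha_1,\ldots,\alpha_{n+1})$, the image is precisely the hyperplane
\[
H=\Big\{(\beta_1,\ldots,\beta_{n+1})\in\Ro^{n+1}\ \Big|\ \sum_{i=1}^{n+1}\alpha_i\beta_i=0\Big\}.
\]
Thus $V_\Delta$ is annihilated by the $n$-dimensional space of operators $\sum\beta_i\dd_i$ with $(\beta_i)\in H$.

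Next I would make a linear change of coordinates on $\Ro^{n+1}$. Choose new coordinates $(y_1,\ldots,y_{n+1})$ on the space of support parameters so that $y_{n+1}=\alpha_1c_1+\cdots+\alpha_{n+1}c_{n+1}$. Then the dual basis on the operator side is such that $\dd/\dd y_1,\ldots,\dd/\dd y_n$ span exactly the subspace of operators corresponding to $H$ (they are the operators that kill $y_{n+1}$). Consequently $V_\Delta$, viewed as a polynomial in $y_1,\ldots,y_{n+1}$, satisfies $\dd V_\Delta/\dd y_j=0$ for $j=1,\ldots,n$, which means $V_\Delta$ is a polynomial in $y_{n+1}$ alone.

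Finally, homogeneity of $V_\Delta$ of degree $n$ (Theorem~\ref{thmVolPoly}) forces $V_\Delta=\const\cdot y_{n+1}^n$, and returning to the original coordinates gives~\eqref{eqPowerLinForm}. No step looks like a serious obstacle: the only mildly delicate point is the identification of the image of $u\mapsto(\langle u,\lambda(i)\rangle)_i$ as the hyperplane $H$, which uses that the general-position hypothesis guarantees the relation $\sum\alpha_i\lambda(i)=0$ is unique up to scalar.
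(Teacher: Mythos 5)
Your proof is correct, and it takes a genuinely different and more elementary route than the paper's. The paper postpones the proof until after Theorem~\ref{thmStructSphere}: since the underlying complex of an elementary multi-fan is the boundary of a simplex (a sphere), the Reisner--Stanley Cohen--Macaulay/Gorenstein machinery gives $\A^*(\Delta)\cong\Ro[\dd\triangle_{[n+1]}]/\Theta$, hence identifies $\Ann V_\Delta$ as the ideal generated by $\prod_{i=1}^{n+1}\dd_i$ and the linear operators $\theta_j$; one then checks that this ideal also annihilates $(\alpha_1c_1+\cdots+\alpha_{n+1}c_{n+1})^n$ and invokes the Macaulay duality of Proposition~\ref{propPDdescription} to conclude the two polynomials are proportional. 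You instead use only the linear annihilators from Lemma~\ref{lemDerivsOfVolPol}(1): the image of $u\mapsto(\langle u,\lambda(i)\rangle)_i$ is exactly the hyperplane $\sum_i\alpha_i\beta_i=0$ (since the relation space of $n+1$ spanning vectors in $\Ro^n$ is one-dimensional), so after a linear change of coordinates $V_\Delta$ is killed by $n$ independent partial derivatives and, being homogeneous of degree $n$, must be $\const\cdot(\sum_i\alpha_ic_i)^n$. This avoids the Stanley--Reisner relation and the structure theorem entirely, and in particular could be given already in Section~\ref{secFormulaVolPoly} where the lemma is stated, with no forward reference. What the paper's detour buys is the extra information that $\Ann V_\Delta$ is precisely $(\prod_i\dd_i,\theta_1,\ldots,\theta_n)$, i.e., the full structure of $\A^*(\Delta)$ for the elementary multi-fan, which your argument does not by itself establish (nor does the lemma require it). Neither proof addresses whether the constant is nonzero, and neither needs to.
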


We postpone the proof to subsection \ref{subsecStructSpheres}.

\begin{rem}
It is not difficult to compute the constant: just apply the
differential operator $\dd_J$ for $J\subset[n+1]$, $|J|=n$ to both
sides of \eqref{eqPowerLinForm} and use
Lemma~\ref{lemDerivsOfVolPol}. However, we do not need this
constant at the moment and ignore it to simplify the exposition.
\end{rem}

\begin{thm}\label{thmVolPolyFormula}
Let $\Delta=(w_{ch},\lambda)$ be a complete multi-fan. Let $v\in
V$ be a generic vector. Then
\begin{equation}\label{eqVolPolyFormula}
V_{\Delta}(c_1,\ldots,c_m)=\frac{1}{n!}\sum_{I=\{i_1,\ldots,i_n\}\in
K} \frac{w(I)}{|\det\lambda_I|\prod_{j=1}^n\alpha_{I,j}}
(\alpha_{I,1}c_{i_1}+\cdots+\alpha_{I,n}c_{i_n})^n,
\end{equation}
where $\alpha_{I,1},\ldots,\alpha_{I,n}$ are the coordinates of
$v$ in the basis $(\lambda(i_1),\ldots,\lambda(i_n))$, and $w(I)$
is the weight.
\end{thm}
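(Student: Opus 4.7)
The plan is to derive formula \eqref{eqVolPolyFormula} directly from Proposition \ref{propVolFormula} and the explicit form of the index map in \eqref{eqDefIndexMap}, treating the generic vector $v$ as a ``localization'' point at which a rational function can safely be evaluated.

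First, combining Proposition \ref{propVolFormula} with the construction of $\int_\Delta$ in subsection \ref{subsecIntegration}, and observing that $\pi_!^\Delta$ lowers degree by $2n$ (so the output on a degree-$2n$ input is a scalar in $V^*$), I would write
$$n!\, V_\Delta(c_1,\ldots,c_m)=\pi_!^\Delta\bigl((c_1x_1+\cdots+c_mx_m)^n\bigr).$$
Expanding the right-hand side by \eqref{eqDefIndexMap} and using $\iota_I(x_i)=u_i^I$ for $i\in I$ and $\iota_I(x_i)=0$ otherwise, this becomes
$$n!\, V_\Delta=\sum_{I=\{i_1,\ldots,i_n\}\in K^{\langle n\rangle}}\frac{w(I)\bigl(\sum_{j=1}^n c_{i_j}u_{i_j}^I\bigr)^n}{|\det\lambda_I|\prod_{j=1}^n u_{i_j}^I}\in S^{-1}\Ro[V^*].$$

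The key point is that, by Theorem \ref{thmIndexMap}(1), the completeness of $\Delta$ forces this sum to lie in $\Ro[V^*]$ rather than merely in $S^{-1}\Ro[V^*]$; combined with the degree count, the sum is actually a scalar as an element of $\Ro[V^*]$. Consequently, as a rational function on $V^*$ it evaluates to one and the same number at every point where all denominators are simultaneously nonvanishing. A vector $v$ is generic with respect to $\Delta$ (in the sense of the definition preceding Definition \ref{defCompleteMultifan}) precisely when $\langle u_{i_j}^I,v\rangle\neq 0$ for every $I\in K^{\langle n\rangle}$ and every $j$, because $\langle u_{i_j}^I,\cdot\rangle=0$ cuts out the hyperplane $\langle \lambda(i_k):k\neq j\rangle$. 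Writing $v=\sum_{k=1}^n\alpha_{I,k}\lambda(i_k)$ in the basis associated with $I$, the duality $\langle u_{i_j}^I,\lambda(i_k)\rangle=\delta_{jk}$ gives $\langle u_{i_j}^I,v\rangle=\alpha_{I,j}$. Substituting $v$ for the $V^*$-variable in the displayed identity now produces
$$n!\, V_\Delta(c_1,\ldots,c_m)=\sum_{I\in K^{\langle n\rangle}}\frac{w(I)\bigl(\sum_{j=1}^n\alpha_{I,j}c_{i_j}\bigr)^n}{|\det\lambda_I|\prod_{j=1}^n\alpha_{I,j}},$$
which is exactly \eqref{eqVolPolyFormula}.

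The main conceptual obstacle is recognizing that individually each summand of the index-map expression is a rational function with a pole along $u_{i_j}^I=0$, yet the completeness of $\Delta$ forces the total sum to be polynomial---and in fact constant on $V^*$---so that the freedom in the choice of a generic $v$ is exactly what yields the advertised \emph{family} of formulas. No step appears technically heavy, but care is needed with the orderings of the vertices of $I$, the sign conventions entering $|\det\lambda_I|$, and the identifications $V\cong V^*$ induced by the inner product, since these are the only places where a normalization error could creep in.
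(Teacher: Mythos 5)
Your proof is correct, but it takes a genuinely different route from the paper's. The paper writes $\Delta$ as a cone-wise sum $\sum_J z(J)\Delta^{el}(\eta(J))$ of elementary multi-fans, where $z_{ch}$ is the cone over $w_{ch}$ with apex sent to $v$; it then invokes Lemma \ref{lemElementaryMultifan} (that the volume polynomial of an elementary multi-fan is a power of the linear relation among its vectors --- a fact whose proof is deferred to Section \ref{secStructMFanAlg}), specializes the redundant support parameter $c_r$ to zero, and pins down the constants by applying $\dd_I$ and Lemma \ref{lemDerivsOfVolPol}. You instead observe that $n!\,V_\Delta=\pi_!^\Delta\bigl((\textstyle\sum_i c_ix_i)^n\bigr)$ is a degree-zero, hence constant, element of $\Ro[V^*]$ by Theorem \ref{thmIndexMap}, and simply evaluate the defining rational expression \eqref{eqDefIndexMap} at the generic point $v$, where all denominators $\langle u_{i_j}^I,v\rangle=\alpha_{I,j}$ are nonzero; since equality in the localization $S^{-1}\Ro[V^*]$ implies equality of functions off the poles, this immediately yields \eqref{eqVolPolyFormula}. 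Your identification of genericity of $v$ with the simultaneous nonvanishing of all $\alpha_{I,j}$ is also right, since $\ker u_{i_j}^I$ is exactly the span of the $(n-1)$-dimensional face of the cone $C(I)$ opposite to $i_j$. Your argument is shorter and self-contained --- it needs only Theorem \ref{thmIndexMap} and Proposition \ref{propVolFormula} and avoids the forward reference to the structure theory of homology spheres. What it does not deliver is the paper's more general family of formulas indexed by arbitrary chains $z_{ch}$ with $dz_{ch}=w_{ch}$ (equation \eqref{eqSumElemMFans}), which is the source of the linear identities on powers of linear forms discussed in the remark following the theorem; you recover only the ``cone over $v$'' member of that family, which is all the theorem as stated requires.
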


\begin{proof}
We derive a more general family of formulas, and
\eqref{eqVolPolyFormula} will be a particular case. Let $[m']$ be
a set containing $[m]$ and let
\[
z_{ch}=\sum_{J\in {[m']\choose n+1}}z(J)J\in
C_{n}(\triangle_{[m']};\Ro)
\]
be a simplicial chain such that $dz_{ch} = w_{ch}$ (it exists
since $w_{ch}$, considered as an element in
$C_{n}(\triangle_{[m']};\Ro)$, is closed hence exact). Consider
any function $\eta\colon[m']\to V$ which extends
$\lambda\colon[m]\to V$ and satisfies the condition: for any
$J=\{j_1,\ldots,j_{n+1}\}$ with $z(J)\neq 0$ the vectors
$\eta(j_1),\ldots,\eta(j_{n+1})$ are in general position. Thus for
any such $J$ we can construct an elementary multi-fan
$\Delta^{el}(\eta(J))$.

In the group of multi-fans we have a relation $\Delta = \sum_{J\in
{[m']\choose n+1}}z(J)\Delta^{el}(\eta(J))$, if $\Delta$ is
considered as a multi-fan on $[m']$. Volume polynomial is
additive, thus we get

\begin{equation}\label{eqSumElemMFans}
V_{\Delta}=\sum_{J\in {[m']\choose
n+1}}z(J)V_{\Delta^{el}(\eta(J))}.
\end{equation}

Therefore, any simplicial chain whose boundary is $w_{ch}$ gives a
formula for the volume polynomial. Now let us consider the
particular case, namely, the cone over $w_{ch}$. Let
$[m']=[m]\sqcup\{r\}$ and set $\eta(r)=v$, for a generic vector
$v\in V$. So the phrase ``$v$ is a generic vector'' means that the
set $\lambda(I)\sqcup\{v\}$ is in general position for any
$I\subset[m]$ such that $|I|=n$ and $w(I)\neq 0$. The function $z$
on the cone is defined in an obvious way: $z(I\sqcup\{r\}):=w(I)$.

Relation \eqref{eqSumElemMFans} and Lemma
\ref{lemElementaryMultifan} imply
\begin{equation}\label{eqVolPolyConst}
V_\Delta=\sum_{I=\{i_1,\ldots,i_n\}\subset [m]}\const\cdot
w(I)\cdot(\alpha_{I,1}c_{i_1}+\cdots+\alpha_{I,n}c_{i_n}+\beta_Ic_r)^n.
\end{equation}
The tuple $(\alpha_{I,1},\ldots,\alpha_{I,n},\beta_I)$ is a linear
relation on the vectors $\lambda(i_1),\ldots,\lambda(i_n),v$.
Therefore we may assume that $\beta_I=-1$ and
$(\alpha_{I,1},\ldots,\alpha_{I,n})$ are the coordinates of $v$ in
the basis $\lambda(i_1),\ldots,\lambda(i_n)$.

Left hand side of \eqref{eqVolPolyConst} does not depend on $c_r$
(it is a redundant support parameter), therefore we may put
$c_r=0$:
\begin{equation}\label{eqVolPolyConst2}
V_\Delta=\sum_{I=\{i_1,\ldots,i_n\}\subset [m]}A_I\cdot
w(I)\cdot(\alpha_{I,1}c_{i_1}+\cdots+\alpha_{I,n}c_{i_n})^n.
\end{equation}

To compute the constants $A_I$ take any $J=\{j_1,\ldots,j_n\}\in
K$ and apply the differential operator $\dd_J=\frac{\dd}{\dd
c_{j_1}}\cdot\ldots\cdot \frac{\dd}{\dd c_{j_n}}$ to the identity
\eqref{eqVolPolyConst2}. On the left we have
$\dd_JV_{\Delta}=\frac{w(J)}{|\det\lambda_J|}$, according to Lemma
\ref{lemDerivsOfVolPol}. On the right side all summands with
$I\neq J$ vanish, and the one with $I=J$ contributes $n!\cdot
A_J\cdot w(J)\prod_i\alpha_{J,i}$. Thus
$A_J=\frac{1}{n!}\frac{1}{|\det\lambda_J|\cdot\prod_i\alpha_{J,i}}$
and the statement follows.
\end{proof}

\begin{rem}
Note that the formula \eqref{eqVolPolyFormula} can be applied to
compute the volume of a simple convex polytope in the case when
the polytope is described as the intersection of half-spaces with
the given equations. In this case the formula is known as
Lawrence's formula \cite{Law}. It has found applications in
explicit volumes' calculations.
\end{rem}

\begin{ex}\label{exCPgeneric}
Consider the standard fan $\Delta$ of $\Co P^2$, generated by the
vectors $\lambda(1)=(1,0)$, $\lambda(2)=(0,1)$,
$\lambda(3)=(-1,-1)$. Take the generic vector $v=(1,2)$. We have
\[
v=\lambda(1)+2\lambda(2)=\lambda(2)-\lambda(3)=-2\lambda(3)-\lambda(1).
\]
Theorem \ref{thmVolPolyFormula} implies
\[
V_{\Delta}=\frac12\left(\frac12(c_1+2c_2)^2 - (c_2-c_3)^2 +
\frac12(-c_1-2c_3)^2\right).
\]
This expression equals $\frac12(c_1+c_2+c_3)^2$. The same
expression is given by Lemma~\ref{lemElementaryMultifan}.
\end{ex}

\begin{ex}\label{exDiscretePolariz}
Consider the normal fan of the standard $n$-cube. The underlying
simplicial complex is isomorphic to the boundary of
cross-polytope. Let $\{1,\ldots,n,-1,\ldots,-n\}$ be its set of
vertices, so the maximal simplices have the form
$\{\pm1,\ldots,\pm n\}$. We have $\lambda(\pm i)=\pm e_i$. Take
the generic vector $v=e_1+\cdots+e_n$. Then Theorem
\ref{thmVolPolyFormula} implies
\[
V_{\Delta}=\frac{1}{n!}\sum_{(\epsilon_1,\ldots,\epsilon_n)\in
\{+,-\}^n}\frac{1}{\prod_{i=1}^n\epsilon_i}
(\epsilon_1c_{\epsilon_11}+\cdots+\epsilon_nc_{\epsilon_nn})^n.
\]
On the other hand, we have $V_{\Delta}=\prod_i(c_i+c_{-i})$ by
geometrical reasons. Indeed, the polytope dual to $\Delta$ is the
brick with sides $\{c_i+c_{-i}\}_{i\in [n]}$. By setting
$c_{-i}=0$ for each $i$ we get the identity
\begin{equation}\label{eqPolarizDiscr}
\prod_{i=1}^nc_i =
\frac{1}{n!}\sum_{I\subseteq[n]}(-1)^{n-|I|}c_I^n,
\end{equation}
where $c_I=\sum_{i\in I}c_i$. This identity is well known as
discrete polarization identity.
\end{ex}

\begin{rem}
The proof of Theorem \ref{thmVolPolyFormula} implies the following
consideration. Take two simplicial $n$-chains $z_{ch,1},
z_{ch,2}\in C_{n}(\triangle_{[m]};\Ro)$ endowed with functions
$\eta_1,\eta_2\colon [m]\to \Ro^n$ such that $\eta_{\epsilon}(J)$
is in general position for any simplex $J$ of the chain
$z_{ch,\epsilon}$, $\epsilon=1,2$. Assume that $d z_{ch,1}=d
z_{ch, 2}$ and the functions $\eta_1,\eta_2$ agree on the vertices
of the boundary. Then the volume polynomial of the multi-fan
$\Delta=(d z_{ch,1},\eta_1)=(d z_{ch,2},\eta_2)$ can be expressed
by two formulas:
\begin{multline*}
\sum_{J=(j_1,\ldots,j_{n+1})\subset [m']}\const\cdot
z_1(J)(\alpha_{J,1}c_{j_1}+\cdots+\alpha_{J,n+1}c_{j_{n+1}})^n=
V_\Delta\\ =\sum_{J=(j_1,\ldots,j_{n+1})\subset [m']}\const\cdot
z_2(J)(\alpha_{J,1}c_{j_1}+\cdots+\alpha_{J,n+1}c_{j_{n+1}})^n.
\end{multline*}
We may take a difference of the left and right parts and summarize
as follows. Let us take any closed simplicial $n$-chain $z_{ch}$,
$dz_{ch}=0$, on the vertex set $[m']$, and endow it with a
function $\eta\colon[m']\to\Ro^n$ which is in general position on
any simplex $J$ of the chain. Then we get an identity
\[
\sum_{J=(j_1,\ldots,j_{n+1})\subset [m']}\const\cdot
z(J)(\alpha_{J,1}c_{j_1}+\cdots+\alpha_{J,n+1}c_{j_{n+1}})^n=0
\]
(the constants may be computed by the same method as we used
previously). This seems to be a quite general way to construct
algebraical identities from geometrical data.

This idea can be illustrated by a simple identity obtained in
Example \ref{exCPgeneric}:
\[
\frac12(c_1+2c_2-c_4)^2 - (c_2-c_3-c_4)^2 +
\frac12(-c_1-2c_3-c_4)^2 = (c_1+c_2+c_3)^2.
\]
This identity is induced by the schematic picture shown on
Fig.\ref{figSimpChains}.
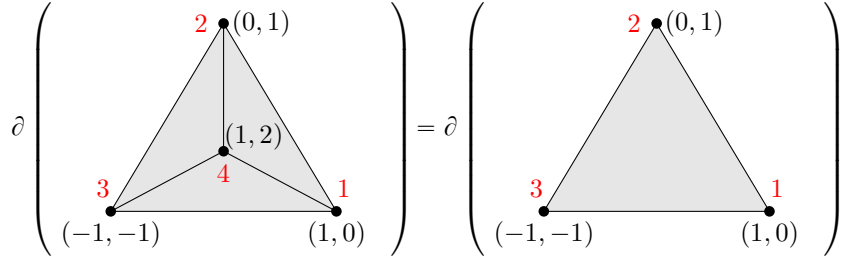
\begin{figure}[h]
\[\dd\left(
\parbox[c]{4.5cm}{
    \begin{tikzpicture}[scale=1]
        \filldraw[fill=black!10]
        (0,0)--(3,0)--(1.5,2.5)--cycle;
        \draw (0,0)--(1.5,0.8);
        \draw (3,0)--(1.5,0.8);
        \draw (1.5,2.5)--(1.5,0.8);
        \fill (0,0) circle(2pt);
        \fill (3,0) circle(2pt);
        \fill (1.5,2.5) circle(2pt);
        \fill (1.5,0.8) circle(2pt);
        \draw[black] (0,-0.3) node{$(-1,-1)$};
        \draw[black] (3,-0.3) node{$(1,0)$};
        \draw[black] (2,2.5) node{$(0,1)$};
        \draw[black] (1.9,1) node{$(1,2)$};

        \draw[red] (-0.1,0.3) node{$3$};
        \draw[red] (3.1, 0.3) node{$1$};
        \draw[red] (1.2,2.5) node{$2$};
        \draw[red] (1.5,0.5) node{$4$};

    \end{tikzpicture}}\right)
=\dd\left(\parbox[c]{4.5cm}{
    \begin{tikzpicture}[scale=1]
        \filldraw[fill=black!10]
        (0,0)--(3,0)--(1.5,2.5)--cycle;
        \fill (0,0) circle(2pt);
        \fill (3,0) circle(2pt);
        \fill (1.5,2.5) circle(2pt);
        \draw[black] (0,-0.3) node{$(-1,-1)$};
        \draw[black] (3,-0.3) node{$(1,0)$};
        \draw[black] (2,2.5) node{$(0,1)$};

        \draw[red] (-0.1,0.3) node{$3$};
        \draw[red] (3.1, 0.3) node{$1$};
        \draw[red] (1.2,2.5) node{$2$};
    \end{tikzpicture}}\right)
\]
\caption{Two simplicial chains with vector functions having the
same boundary} \label{figSimpChains}
\end{figure}
Note that the last step in the proof of Theorem
\ref{thmVolPolyFormula} was to specialize $c_4=0$, but even
without this specialization the identity holds true.
\end{rem}

%
%
%
%
%
%

\section{Poincare duality algebra of a
multi-fan}\label{secPDAmultifan}

%
%

\subsection{Poincare duality algebras}\label{subsecPDAgeneral}

\begin{defin}
Let $\ko$ be a field. Let $\A^*=\bigoplus_{j=0}^n\A^{2j}$ be a
finite-dimensional graded commutative $\ko$-algebra such that
\begin{itemize}
\item there exists an isomorphism $\int_{\A}\colon \A^{2n}\to \ko$;
\item the pairing $\A^{2p}\otimes \A^{2n-2p}\to \ko$, $a\otimes b\mapsto \int_{\A} (a\cdot
b)$ is non-degenerate.
\end{itemize}
Then $\A$ is called \emph{a Poincare duality algebra} of formal
dimension $2n$.
\end{defin}

Let $\dd_i=\frac{\dd}{\dd c_i}$, $i\in[m]$ be the differential
operator acting on the ring of polynomials $\Ro[c_1,\ldots,c_m]$
in a standard way. For a subset $I\subset [m]$ let $\dd_I$ denote
the product $\prod_{i\in I}\dd_i$.

Consider the algebra of differential operators with constant
coefficients $\D:=\Ro[\dd_1,\ldots,\dd_m]$. It will be convenient
to double the degree, so we assume $\deg\dd_i=2$, $i\in[m]$ (while
still assuming that $\deg c_i=1$). For any non-zero homogeneous
polynomial $\Psi\in\Ro[c_1,\ldots,c_m]$ of degree $n$ we may
consider the following ideal in $\D$:
\[
\Ann\Psi:=\{D\in\D\mid D\Psi=0\}.
\]
It is not difficult to check that the quotient $\D/\Ann\Psi$ is a
Poincare duality algebra of formal dimension $2n$ (see
\cite[Prop.2.5.1]{Tim}), where the ``integration map'' assigns the
number $D\Psi\in \Ro$ to any differential operator of rank $n$
(i.e. of formal degree $2n$ in our setting).

It happens that every Poincare duality algebra generated by degree
two can be obtained by this construction as the following
proposition shows.

\begin{prop}\label{propPDdescription}
Suppose $\ch\ko=0$ and let $\ko[m]=\ko[x_1,\ldots,x_m]$ be a
polynomial ring, where $\deg x_i=2$. Then the following three sets
of objects are naturally equivalent:
\begin{enumerate}
\item Poincare duality algebras $\A^*$ of formal dimension $2n$ which are
the quotients of the polynomial ring $\ko[m]$;

\item Non-zero homogeneous polynomials $\Psi\in
\ko[c_1,\ldots,c_m]$ of degree $n$ (where $\deg c_i=1$) up to
multiplication by a non-zero constant;

\item Non-zero linear maps $\int\colon
\ko[m]_{2n}\to\ko$ up to multiplication by a non-zero constant.
\end{enumerate}
\end{prop}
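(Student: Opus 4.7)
The strategy is to establish explicit bijections $(2)\leftrightarrow(3)$ and $(3)\leftrightarrow(1)$; composing them then yields $(2)\leftrightarrow(1)$, recovering the quotient description $\A^*=\D/\Ann\Psi$ from the discussion just preceding the proposition. The underlying tool is the apolarity pairing
\[
\ko[m]_{2k}\otimes \ko[c_1,\ldots,c_m]_k\to \ko,\qquad (D,\Psi)\mapsto D\Psi,
\]
where $\ko[m]=\ko[x_1,\ldots,x_m]$ is identified with the ring $\D$ of constant-coefficient differential operators via $x_i\leftrightarrow\dd_i$. In characteristic zero this pairing is perfect on each graded piece, which is the single input that will do essentially all of the work.

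For $(2)\leftrightarrow(3)$, I would assign to a homogeneous $\Psi$ of degree $n$ the functional $\int_\Psi\colon D\mapsto D\Psi$ on $\ko[m]_{2n}$; perfectness of apolarity makes $\Psi\mapsto\int_\Psi$ a linear isomorphism $\ko[c]_n\xrightarrow{\sim}\Hom(\ko[m]_{2n},\ko)$ which descends to a bijection of nonzero objects modulo nonzero scalars. For $(3)\leftrightarrow(1)$, given a nonzero $\int\colon\ko[m]_{2n}\to\ko$ (extended by zero in other degrees), set
\[
\Ann_\int:=\{a\in\ko[m]\mid \int(a\cdot b)=0\text{ for all }b\in\ko[m]\},
\]
which is a graded ideal containing $\ko[m]_{>2n}$. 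Then $\A_\int:=\ko[m]/\Ann_\int$ is finite-dimensional, concentrated in degrees $0,\ldots,2n$, and its multiplication composed with $\int$ induces a graded pairing $\A_\int^{2k}\otimes \A_\int^{2n-2k}\to\ko$ which is non-degenerate by construction; together with $\A_\int^{2n}\cong\ko$ (using $\int\neq 0$ and $\Ann_\int\cap\ko[m]_{2n}=\ker\int$), this exhibits $\A_\int$ as a Poincare duality algebra of formal dimension $2n$. In the reverse direction, a PD-quotient $\ko[m]\twoheadrightarrow\A$ equipped with $\int_\A\colon\A^{2n}\xrightarrow{\sim}\ko$ produces a functional $\ko[m]_{2n}\to\ko$ by composition.

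It remains to verify the two round-trip identities modulo scalars. Starting from a PD-quotient $\A=\ko[m]/I$, one must check $\Ann_{\int_\A}=I$: the inclusion $I\subset\Ann_{\int_\A}$ is tautological, and the reverse is exactly the non-degeneracy of the PD pairing on $\A$, for if $a\notin I$ then the class $\bar a\in\A$ is nonzero, so PD supplies $\bar b$ with $\int_\A(\bar a\bar b)\neq 0$, whence $a\notin\Ann_{\int_\A}$. The round trip starting from $\int$ is immediate from the definition. The only step that is not purely formal is verifying that $\A_\int$ genuinely satisfies Poincare duality; this is essentially built into the definition of $\Ann_\int$, but must be unwound carefully using that $\Ann_\int$ is an ideal so the bilinear form induced by $\int$ descends to a well-defined non-degenerate pairing on the quotient in each pair of complementary degrees. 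With this step handled, the three correspondences match up and the proposition follows.
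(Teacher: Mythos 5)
Your proposal is correct and follows essentially the same route as the paper's sketch: the correspondence $(3)\leftrightarrow(1)$ via the kernel ideal of the induced pairing is exactly the paper's $W^*$ construction, and your $(2)\leftrightarrow(3)$ via perfectness of the apolarity pairing is just an abstract rephrasing of the paper's explicit formula $\Psi=\frac{1}{n!}\int(c_1x_1+\cdots+c_mx_m)^n$ together with the polarization identity. No gaps; your treatment of the round-trip identities is in fact slightly more complete than the paper's "routine check".
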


\begin{proof}
We give a very brief sketch of the proof. For details the reader
is referred to the monograph \cite{MS} which, among other things,
describes the case $\ch\ko\neq0$ (for general fields instead of a
polynomial $\Psi$ one should take an element of divided power
algebra). Also we would like to mention that an equivalence of (1)
and (2) is a manifestation of the well-known phenomenon called
Macaulay duality (or its extended version, Matlis duality).

(1)$\Rightarrow$(3). Let $\A^*\cong\ko[m]/\ca{I}$ be a Poincare
duality quotient of the ring of polynomials. Then we have a linear
isomorphism $\int_{\A}\colon \A^{2n}\to \ko$. The composite
\[
\ko[m]_{2n}\twoheadrightarrow \A^{2n}\to \ko
\]
is the required linear map.

(3)$\Rightarrow$(1). Given a linear map $\int\colon\ko[m]_{2n}\to
\ko$ we may define a pairing $\ko[m]_{2p}\otimes\ko[m]_{2n-2p}\to
\ko$ by $a\otimes b\mapsto \int a\cdot b$. This pairing is
degenerate and we define its kernel:
\[
W^*=\bigoplus\nolimits_p W^{2p},\quad W^{2p}=\{x\in
\ko[m]_{2p}\mid \int x\cdot \ko[m]_{2n-2p}=0\}.
\]
It is easy to check that $W^*\subset \ko[m]$ is an ideal and
$\ko[m]/W^*$ is a Poincare duality algebra.

(3)$\Rightarrow$(2). We construct a polynomial $\Psi$ in
$c_1,\ldots,c_m$ by
\[
\Psi:=\frac{1}{n!}\int(c_1x_1+\cdots+c_mx_m)^n.
\]
This polynomial is non-zero. Indeed, $\ko[m]_{2n}$ is additively
generated by the monomials of degree $n$ in the variables $x_i$.
Each monomial can be expressed as a linear combination of
expressions of the form $(c_1x_1+\cdots+c_mx_m)^n$ for some
constants $c_i$ as follows from the polarization identity (see
\eqref{eqPolarizDiscr} in Example \ref{exDiscretePolariz} below).
Thus expressions of the form $(c_1x_1+\cdots+c_mx_m)^n$ linearly
span $\ko[m]_{2n}$ and therefore, since $\int$ is non-zero, the
polynomial $\Psi$ is not a constant zero as well.

(2)$\Rightarrow$(1). Given a homogeneous polynomial $\Psi$ in the
variables $c_1,\ldots,c_m$, we may construct a Poincare duality
quotient $\ko[\dd_1,\ldots,\dd_m]/\Ann\Psi$, where the action of
$\dd_i=\frac{\dd}{\dd c_i}$ on polynomials is defined formally in
the usual way.

The consistency of all these constructions is a routine check.
\end{proof}

The same arguments can be used to prove that there is a one-to-one
correspondence between Poincare duality quotients of formal
dimension $2n$ of an algebra $\ca{B}^*$ and the non-zero linear
functionals on the linear space $\ca{B}^{2n}$. For this
correspondence we do not need the assumptions that $\ca{B}$ is
generated by degree $2$ and $\ch\ko=0$. This motivates the
following definition.

\begin{defin}\label{definPoincarization}
Let $\ca{B}^*=\bigoplus_{j}\ca{B}^{2j}$ be a graded commutative
$\ko$-algebra and suppose that for some $n>0$ a non-zero linear
map $\int\colon \ca{B}^{2n}\to\ko$ is given. The corresponding
Poincare duality quotient of $\ca{B}^*$, i.e. the algebra
\[
\ca{B}^*/W^*,\quad W^{2p}=\{b\in \ca{B}^{2p}\mid \int b\cdot
\ca{B}^{2n-2p}=0\}.
\]
is denoted by $\PD(\ca{B}^*,\int)$ and called \emph{Poincare
dualization} of $\ca{B}^*$ (w.r.t. $\int$).
\end{defin}

\begin{lemma}\label{lemPDinduced}
Consider two algebras $\ca{B}_1^*,\ca{B}_2^*$ with the given
non-zero linear maps $\int_1\colon \ca{B}_1^{2n}\to \ko$,
$\int_2\colon\ca{B}_2^{2n}\to\ko$. Let $\varphi\colon
\ca{B}_1^*\twoheadrightarrow\ca{B}_2^*$ be an epimorphism of
algebras consistent with the integration maps:
$\int_2\circ\varphi|_{\ca{B}_1^{2n}}=\int_1$. Then $\varphi$
induces an isomorphism $\PD(\ca{B}_1^*,\int_1)\cong
\PD(\ca{B}_2^*,\int_2)$.
%
\end{lemma}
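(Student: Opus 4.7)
The plan is to unwind the definition of Poincar\'e dualization and check that $\varphi$ descends to an isomorphism by showing, simply, that $\varphi^{-1}(W_2^*) = W_1^*$, where $W_i^*$ is the kernel ideal in $\ca{B}_i^*$ cut out by $\int_i$ as in Definition \ref{definPoincarization}. Once this identity is established, the first isomorphism theorem combined with surjectivity of $\varphi$ gives the result immediately.

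First I would verify the inclusion $\varphi(W_1^*) \subset W_2^*$, so that $\varphi$ descends to a well-defined graded algebra homomorphism $\bar\varphi \colon \PD(\ca{B}_1^*,\int_1) \to \PD(\ca{B}_2^*,\int_2)$. For $b \in W_1^{2p}$ and any $c' \in \ca{B}_2^{2n-2p}$, surjectivity of $\varphi$ lets us write $c' = \varphi(c)$ for some $c \in \ca{B}_1^{2n-2p}$, and then
\[
\int_2 \varphi(b)\cdot c' = \int_2 \varphi(bc) = \int_1 bc = 0,
\]
where the middle equality uses that $\varphi$ is multiplicative and the last uses compatibility of $\varphi$ with the integration maps in degree~$2n$. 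So $\varphi(b) \in W_2^{2p}$.

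Next I would prove the reverse inclusion $\varphi^{-1}(W_2^*) \subset W_1^*$, which will give injectivity of $\bar\varphi$. If $b \in \ca{B}_1^{2p}$ satisfies $\varphi(b) \in W_2^{2p}$, then for every $c \in \ca{B}_1^{2n-2p}$ the same computation as above yields $\int_1 bc = \int_2 \varphi(b)\varphi(c) = 0$, so $b \in W_1^{2p}$. Surjectivity of $\bar\varphi$ is inherited from the surjectivity of $\varphi$. Combining the two inclusions, $\bar\varphi$ is a graded surjection with trivial kernel, hence an isomorphism.

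There is no real obstacle here, since everything is a direct unwinding of definitions; the only thing to be mildly careful about is that $\varphi$ respects the grading (which is implicit in the phrase ``epimorphism of algebras consistent with the integration maps,'' since the integration maps live in degree $2n$). In particular, the argument does not require $\ca{B}_i^*$ to be generated in degree~$2$ nor any hypothesis on $\ch\ko$.
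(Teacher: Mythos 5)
Your proof is correct and follows essentially the same route as the paper: both arguments reduce to the identity $\int_1 bc=\int_2\varphi(b)\varphi(c)$ together with surjectivity of $\varphi$. The only cosmetic difference is that you obtain injectivity directly from the reverse inclusion $\varphi^{-1}(W_2^*)\subset W_1^*$, whereas the paper deduces it by invoking the Poincar\'e duality property of $\PD(\ca{B}_1^*,\int_1)$ and the fact that $\widetilde{\varphi}$ is an isomorphism in degree $2n$; the content is the same.
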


\begin{proof}
From the surjectivity of $\varphi$ it easily follows that the
kernel $W^*_1$ of the intersection pairing in the first algebra
maps to the kernel $W_2^*$ of the second algebra. Thus the
homomorphism $\widetilde{\varphi}\colon\PD(\ca{B}_1^*,\int_1)\to
\PD(\ca{B}_2^*,\int_2)$ is well defined. Obviously it is
surjective. Let us prove that $\widetilde{\varphi}$ is injective.
The map $\widetilde{\varphi}$ is an isomorphism in degree $2n$.
Suppose that $\widetilde{\varphi}(a)=0$ for some $0\neq a\in
\PD(\ca{B}_1^*,\int_1)_{2p}$. By the definition of Poincare
duality algebra, there exists $b\in
\PD(\ca{B}_1^*,\int_1)_{2n-2p}$ such that $ab\neq 0$. But then we
have
$\widetilde{\varphi}(ab)=\widetilde{\varphi}(a)\widetilde{\varphi}(b)=0$
which gives a contradiction.
\end{proof}

In the following let $\A^*(\Psi)=\D/\Ann\Psi$ denote the Poincare
duality algebra corresponding to the homogeneous polynomial $\Psi$
of degree $n$.

\subsection{Algebras associated with
multi-fans}\label{subsecPDAMFan}

The linear maps $\int_{\Delta}\colon H^{2n}(\Delta)\to \Ro$ and
$\int_{\Delta,\Ro[m]}\colon \Ro[x_1,\ldots,x_m]_{2n}\to \Ro$ are
consistent with the natural projection $\Ro[x_1,\ldots,x_m]\to
H^*(\Delta)$. Thus Lemma \ref{lemPDinduced} implies an isomorphism
\[
\PD(H^*(\Delta),\int_{\Delta})\cong
\PD(\Ro[m],\int_{\Delta,\Ro[m]}).
\]
According to the constructions mentioned in the proof of
Proposition \ref{propPDdescription}, this Poincare duality algebra
is also isomorphic to $\A^*(V_\Delta)=\D/\Ann V_\Delta$, where
$V_\Delta$ is the volume polynomial.

\begin{defin}
Let $\Delta$ be a complete simplicial multi-fan of dimension $n$
with $m$ rays. Then the algebra
\[
\A^*(\Delta):=\D/\Ann V_\Delta\cong
\PD(H^*(\Delta),\int_{\Delta})\cong
\PD(\Ro[m],\int_{\Delta,\Ro[m]})
\]
is called \emph{a multi-fan algebra} of $\Delta$. 
\end{defin}


\begin{rem}\label{remSimplePropMFAlgebra}
The constructions above show that there is a ring epimorphism from
$H^*(\Delta)\cong \Ro[K]/\Theta$ to $\A^*(\Delta)\cong
\PD(H^*(\Delta),\int_{\Delta})$, sending $x_i$ to $\dd_i$ for each
$i\in [m]$. Therefore $\A^*(\Delta)$ can be considered as a
quotient of $H^*(\Delta)$, and all the relations in
$\Ro[K]/\Theta$ are inherited by $\A^*(\Delta)$. We have
\[
\dd_JV_\Delta = 0 \mbox{ for } J\notin K
\qquad\mbox{(Stanley--Reisner relations),}
\]
\[
(\sum_{i\in[m]}\lambda_{i,j}\dd_i)V_\Delta = 0 \mbox{ for
}j=1,\ldots,n \qquad\mbox{(Linear relations).}
\]
This proves points 1 and 2 of Lemma \ref{lemDerivsOfVolPol} in a
more conceptual way.
\end{rem}

%
%
%
%
%
%

\section{Structure of multi-fan algebra in particular
cases}\label{secStructMFanAlg}

%
%
%

\subsection{Ordinary fans}\label{subsecStructFans}

As was mentioned in the introduction, when $\Delta$ is a normal
fan of a simple convex polytope $P$, the construction of the
algebra $\A^*(\Delta)=\D/\Ann V_\Delta$ was introduced by Timorin
in \cite{Tim}. In this case the underlying simplicial complex of
$\Delta$ is a sphere and the weight function takes value $+1$ on
all maximal simplices of $K$. Using purely combinatorial and
geometrical considerations Timorin proved that $\A^*(\Delta)\cong
\Ro[K]/\Theta$. This means, in particular, that the dimension
$d_i=\dim \A^{2i}(\Delta)$ is equal to $h_i$, the $h$-number of
$K$ (see the definition below). The developed technique is applied
to prove that $\A^*(\Delta)$ is a Lefschetz algebra, meaning that
there exists an element $\omega\in \A^2(\Delta)$ such that
\[
\times \omega^{n-2k}\colon \A^{2k}\to \A^{2n-2k}
\]
is an isomorphism for each $k=0,\ldots,[n/2]$. In particular this
implies that the distribution of $h$-numbers of convex simplicial
spheres is unimodal, i.e.
\[
h_0\leqslant h_1\leqslant \cdots\leqslant h_{[n/2]}=
h_{n-[n/2]}\geqslant \cdots\geqslant h_{n-1}\geqslant h_n.
\]
According to Timorin's result, Lefschetz element $\omega$ may be
chosen in the form $c_1(P)=c_1\dd_1+\cdots+c_m\dd_m\in
\A^2(\Delta)$ where $P$ is any convex simple polytope with the
normal fan $\Delta$ and $c_1,\ldots,c_m$ are its support
parameters.

For complete non-singular fans the algebra
$\A^*[\Delta]\cong\Ro[K]/\Theta$ coincides with the cohomology
algebra $H^*(X_\Delta;\Ro)$ of the corresponding toric variety. It
was the original observation of Stanley \cite{StG}, that in the
case when a fan $\Delta$ is polytopal, the corresponding complete
toric variety $X_\Delta$ is projective, therefore there exists a
Lefschetz element in its cohomology ring according to hard
Lefschetz theorem.

After Stanley's work, several approaches were developed to prove
the existence of Lefschetz elements in elementary terms, i.e.
without referring to hard Lefschetz theorem. These approaches
include in particular McMullen's construction of the polytope
algebra \cite{McM}, the approach based on continuous piece-wise
polynomial functions \cite{Brion}, and Timorin's construction
based on the volume polynomial and differential operators
\cite{Tim}.

We will see that ordinary fans are not the only examples of
multi-fans for which the structure of $\A^*(\Delta)$ can be
explicitly described. On the other hand, $\A^*(\Delta)$ is always
a Poincare duality algebra, so it is natural to ask if it is
Lefschetz (or at least if the dimension vector
$(d_0,d_1,\ldots,d_n)$ is unimodal). Later we will show that this
is not true in general, see Theorem \ref{thmEveryAlgIsMF}.

%
%
%

\subsection{Combinatorial preliminaries}\label{subsecStructCombinPrelim}

For now we concentrate on multi-fans based on oriented
pseudomanifolds as described in Example \ref{exMFfromSimpComp}.
Let $K$ be a pure simplicial complex of dimension $n-1$ on the
vertex set $[m]$.

Let $f_j$ denote the number of $j$-dimensional simplices of $K$
for $j=-1,0,\ldots,n-1$, in particular we assume that $f_{-1}=1$
(this reflects the fact that the empty simplex formally has
dimension $-1$). The $h$-numbers of $K$ are defined by the
formula:
\begin{equation}\label{eqHvecDefin}
\sum_{j=0}^nh_jt^{n-j}=\sum_{j=0}^nf_{j-1}(t-1)^{n-j},
\end{equation}
where $t$ is a formal variable. Let $\br_j(K)$ denote the reduced
Betti number $\dim \Hr_j(K)$ of $K$. \emph{The $h'$- and
$h''$-numbers} of $K$ are defined by the formulas
\begin{equation}\label{eqDefHprime}
h_j'=h_j+{n\choose
j}\left(\sum_{s=1}^{j-1}(-1)^{j-s-1}\br_{s-1}(K)\right)\mbox{ for
} 0\leqslant j\leqslant n;
\end{equation}
\begin{equation}\label{eqDefHtwoprimes}
h_j'' = h_j'-{n\choose j}\br_{j-1}(K) = h_j+{n\choose
j}\left(\sum_{s=1}^{j}(-1)^{j-s-1}\br_{s-1}(K)\right)
\end{equation}
for $0\leqslant j\leqslant n-1$, and $h''_n=h'_n$. The sum over an
empty set is assumed zero.

%
%
%

\subsection{Homology spheres}\label{subsecStructSpheres}

\begin{defin}
$K$ is called \emph{Cohen--Macaulay} (over $\ko$), if
$\Hr_j(\lk_KI;\ko)=0$ for any $I\in K$ and $j<\dim\lk_KI=n-1-|I|$.
If, moreover, $\Hr_{n-1-|I|}(\lk_KI;\ko)\cong\ko$ for any $I\in
K$, then $K$ is called \emph{Gorenstein*} or \emph{(generalized)
homology sphere}.
\end{defin}

The famous theorems of Reisner and Stanley (the reader is referred
to the monograph \cite{St}) tell that whenever $K$ is
Cohen--Macaulay (resp. Gorenstein*), its Stanley--Reisner algebra
$\ko[K]$ is Cohen--Macaulay (resp. Gorenstein).

Given a characteristic function $\lambda\colon[m]\to V\cong \Ro^n$
we obtain a linear system of parameters
$\theta_1,\ldots,\theta_n\in\Ro[K]$. It generates an ideal which
we denoted by $\Theta\subset \Ro[K]$ in subsection
\ref{subsecSRrings}. In Cohen--Macaulay case every linear system
of parameters is a regular sequence. This implies \cite{St}:
\[
\dim (\Ro[K]/\Theta)_{2j} = h_j.
\]
If $K$ is a homology sphere, then $\Ro[K]$ is Gorenstein. Thus its
quotient by a linear system of parameters $\Ro[K]/\Theta$ is a
Gorenstein algebra of Krull dimension zero. This implies that
$\Ro[K]/\Theta$ is a Poincare duality algebra \cite[Part 1]{MS}.

Now let $\Delta$ be a complete multi-fan based on a homology
sphere $K$. We have the ring epimorphism
$\Ro[K]/\Theta\to\A^*(\Delta)$ (see Remark
\ref{remSimplePropMFAlgebra}). Since both algebras have Poincare
duality, it is an isomorphism (see Lemma \ref{lemPDinduced}). This
proves the following

\begin{thm}\label{thmStructSphere}
Let $\Delta$ be a complete multi-fan based on a homology sphere
$K$. Then $\A^*(\Delta)\cong\Ro[K]/\Theta$. It follows that $\dim
\A^{2j}(\Delta)=h_j$, the $h$-number of~$K$.
\end{thm}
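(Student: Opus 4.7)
The plan is to deduce the isomorphism essentially for free by invoking the uniqueness of Poincar\'e dualizations established in Lemma~\ref{lemPDinduced}. The key observation is that we already have, by Remark~\ref{remSimplePropMFAlgebra}, a canonical ring epimorphism $\varphi\colon H^*(\Delta)=\Ro[K]/\Theta\twoheadrightarrow \A^*(\Delta)$ sending $x_i\mapsto \dd_i$. The whole game is therefore to show that the \emph{source} $\Ro[K]/\Theta$ already is a Poincar\'e duality algebra of formal dimension $2n$; once both sides are Poincar\'e duality algebras with a surjection between them compatible with the top-degree integration maps, Lemma~\ref{lemPDinduced} forces $\varphi$ to be an isomorphism.

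First I would invoke the Reisner--Stanley theorems: since $K$ is Gorenstein* (a homology sphere), the Stanley--Reisner algebra $\Ro[K]$ is Gorenstein of Krull dimension $n$. Next, as noted in Section~\ref{subsecSRrings}, $\theta_1,\ldots,\theta_n$ is a linear system of parameters in $\Ro[K]$ for any characteristic function $\lambda$; in the Cohen--Macaulay setting this is automatically a regular sequence, so $\Ro[K]/\Theta$ is a zero-dimensional Gorenstein graded $\Ro$-algebra. Such an algebra is exactly a Poincar\'e duality algebra (see e.g.\ \cite{MS}), and the same Cohen--Macaulay property gives the Hilbert function $\dim(\Ro[K]/\Theta)_{2j}=h_j$, which will yield the second assertion of the theorem immediately.

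The last step is to apply Lemma~\ref{lemPDinduced} to $\varphi$. This requires checking two things: (i) the formal dimensions match, i.e.\ $(\Ro[K]/\Theta)^{2n}$ is one-dimensional (which is the Gorenstein condition $h_n=1$, true because $\Hr_{n-1}(K;\Ro)\cong \Ro$), and (ii) the composition of $\varphi$ with the evaluation $\int_\Delta\colon \A^{2n}(\Delta)\to\Ro$ is a non-zero functional on $(\Ro[K]/\Theta)^{2n}$ that agrees with the natural integration map used to Poincar\'e-dualize $\Ro[K]/\Theta$. Non-vanishing follows from Corollary~\ref{corOmegaInjective}: since the underlying cycle $[\Delta]$ is the non-zero fundamental class of the homology sphere $K$, $V_\Delta\neq 0$ and hence $\int_\Delta$ is non-zero; uniqueness then follows because a Poincar\'e duality structure on $\Ro[K]/\Theta$ is determined up to scalar by a choice of non-zero top-degree functional (Proposition~\ref{propPDdescription}).

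The only real subtlety I anticipate is checking assertion (i): that $\A^*(\Delta)$ also has formal dimension exactly $2n$, rather than something smaller that would make $\varphi$ an isomorphism onto a proper Poincar\'e duality quotient of $\Ro[K]/\Theta$. But this is the same non-vanishing statement and is handled by Corollary~\ref{corOmegaInjective}. Once both Poincar\'e duality structures are in place and the integration maps are matched (up to a harmless scalar), Lemma~\ref{lemPDinduced} gives the isomorphism $\Ro[K]/\Theta\xrightarrow{\cong}\A^*(\Delta)$, and the $h$-number identity $\dim\A^{2j}(\Delta)=h_j$ is then read off from the Cohen--Macaulay Hilbert series of $\Ro[K]/\Theta$.
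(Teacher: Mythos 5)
Your proposal is correct and follows essentially the same route as the paper: establish that $\Ro[K]/\Theta$ is itself a Poincar\'e duality algebra via the Reisner--Stanley Gorenstein* machinery (which also gives the $h$-numbers), and then conclude that the canonical epimorphism $\Ro[K]/\Theta\twoheadrightarrow\A^*(\Delta)$ from Remark~\ref{remSimplePropMFAlgebra} is an isomorphism by Lemma~\ref{lemPDinduced}. The extra care you take in checking that $\int_\Delta$ is non-zero (via Corollary~\ref{corOmegaInjective}) is a detail the paper leaves implicit, but it is the same argument.
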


Note that Poincare duality implies the well-known
\emph{Dehn--Sommerville relations for homology spheres}:
$h_j=h_{n-j}$.

We are in position to prove Lemma \ref{lemElementaryMultifan}
which states that the volume polynomial of an elementary multi-fan
$\Delta$ on the vectors $\lambda(i)\in V$ $(i=1,\ldots,n+1)$, is
equal, up to multiplicative constant, to
$(\sum_{i=1}^{n+1}\alpha_ic_i)^n$, where
$(\alpha_1,\ldots,\alpha_{n+1})$ is a linear relation on
$\lambda(i)$'s.

\begin{proof}[Proof of Lemma \ref{lemElementaryMultifan}]
The underlying simplicial complex of $\Delta$ is the boundary of a
simplex, which is a sphere. Therefore, by Theorem
\ref{thmStructSphere} we have $\A^*(\Delta)\cong \Ro[\dd
\triangle_{[n+1]}]/\Theta$. Hence the ideal $\Ann V_\Delta\subset
\Ro[\dd_1,\ldots,\dd_{n+1}]$ is generated by
$\prod_{i=1}^{n+1}\dd_i$ (Stanley--Reisner relation) and linear
differential operators $\theta_j=\sum_{i=1}^{n+1}
\lambda_{i,j}\dd_i$ for $j=1,\ldots,n$. Here
$(\lambda_{i,j})_{j=1}^n$ are the coordinates of the vector
$\lambda(i)$ for each $i=1,\ldots,n+1$. Since
$\sum_{i=1}^{n+1}\alpha_i\lambda(i)=0$ we have a linear relation
$\sum_{i=1}^{n+1}\alpha_i\lambda_{i,j}=0$ for each $j=1,\ldots,n$.
Now it is easy to check that the differential operators
$\prod_{i=1}^{n+1}\dd_i$ and $\theta_j=\sum_{i=1}^{n+1}
\lambda_{i,j}\dd_i$, $j=1,\ldots,n$ annihilate the polynomial
$(\alpha_1c_1+\cdots+\alpha_{n+1}c_{n+1})^n$. Thus, according to
Proposition \ref{propPDdescription}, $V_\Delta$ coincides with
$(\alpha_1c_1+\cdots+\alpha_{n+1}c_{n+1})^n$ up to constant.
\end{proof}

%
%
%

\subsection{Homology manifolds}\label{subsecStructManif}

\begin{defin}
$K$ is called \emph{Buchsbaum} (over $\ko$), if
$\Hr_j(\lk_KI;\ko)=0$ for any $I\in K$, $I\neq\varnothing$ and
$j<\dim\lk_KI=n-1-|I|$. If, moreover,
$\Hr_{n-1-|I|}(\lk_KI;\ko)\cong\ko$ for any $I\in K$,
$I\neq\varnothing$, then $K$ is called \emph{a homology manifold}.
$K$ is called \emph{an orientable homology manifold} if
$\Hr_{n-1}(K;\Zo)\cong \Zo$.
\end{defin}

The difference from the Cohen--Macaulay case is that there are no
restrictions on the topology of $K=\lk_K\varnothing$ itself.
Similar to Cohen--Macaulay property, the term ``Buchsbaum''
indicates that the corresponding algebra $\ko[K]$ is Buchsbaum
(the result of Schenzel \cite{Sch}). In Buchsbaum case linear
system of parameters is no longer a regular sequence.
Nevertheless, Buchsbaum complexes are extensively studied. First,
Schenzel's theorem \cite{Sch} tells that if $K$ is a Buchsbaum
complex, then
\[
\dim (\ko[K]/\Theta)_{2j} = h'_j
\]
for $j=0,\ldots,n$ and the $h'$-numbers determined by
\eqref{eqDefHprime}. Second, there is a theory of socles of
Buchsbaum complexes introduced by Novik and Swartz \cite{NS} which
we briefly review next.

Let $\ca{M}$ be a module over the graded polynomial ring
$\ko[m]:=\ko[x_1,\ldots,x_m]$. \emph{The socle} of $\ca{M}$ is the
following subspace
\[
\soc\ca{M}:=\{a\in \ca{M}\mid a\cdot\ko[m]_+=0\}.
\]
which is obviously a $\ko[m]$-submodule of $\ca{M}$.

If $K$ is Buchsbaum, then there exists a submodule
$I_{NS}\subset\soc(\ko[K]/\Theta)$ such that
\[
(I_{NS})_{2j}\cong {n\choose j}\Hr^{j-1}(K;\ko),
\]
where the right hand side means the direct sum of ${n\choose j}$
copies of $\Hr^{j-1}(K;\ko)$. Moreover, the result of \cite{NSgor}
tells that whenever $K$ is an orientable connected homology
manifold, then $I_{NS}$ coincides with $\soc(\ko[K]/\Theta)$ and
the quotient
\[
(\ko[K]/\Theta)/I_{NS}^{<2n}
\]
is a Gorenstein algebra (thus Poincare duality algebra). Here
$I_{NS}^{<2n}$ is the part of $I_{NS}$ taken in all degrees except
the top one, $2n$. The definition of $h''$-numbers
\eqref{eqDefHtwoprimes} implies that
\[
\dim ((\ko[K]/\Theta)/I_{NS}^{<2n})_{2j} = h''_j \mbox{ for }
0\leqslant j\leqslant n.
\]

Now let $\Delta$ be a complete multi-fan based on an orientable
connected homology manifold $K$. Recall from Definition
\ref{definPoincarization} that $W^*$ denotes the subspace of
$H^*(\Delta)=\Ro[K]/\Theta$ whose graded components are
\begin{multline}
W^{2j}=\{a\in (\Ro[K]/\Theta)_{2j}\mid \int a\cdot
(\Ro[K]/\Theta)_{2n-2j}=0\} \\ = \{a\in (\Ro[K]/\Theta)_{2j}\mid
\int a\cdot \Ro[m]_{2n-2j}=0\}.
\end{multline}
By definition,
$\A^*(\Delta)=\PD(\Ro[K]/\Theta)=(\Ro[K]/\Theta)/W^*$. The socle
$\soc(\Ro[K]/\Theta)$ lies in $W^*$ in all degrees except the top
one since it is killed by $\Ro[m]_+$. Therefore we have a
well-defined ring epimorphism
\[
(\Ro[K]/\Theta)/I_{NS}^{<2n}\twoheadrightarrow \A^*(\Delta).
\]
Again, since both algebras have Poincare duality, there holds

\begin{thm}\label{thmStructManif}
Let $\Delta$ be a complete multi-fan based on oriented connected
homology manifold $K$. Then
$\A^*(\Delta)\cong(\Ro[K]/\Theta)/I_{NS}^{<2n}$. It follows that
$\dim \A^{2j}(\Delta)=h_j''$, the $h''$-number of $K$.
\end{thm}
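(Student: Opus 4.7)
The plan is to exhibit a ring epimorphism $\varphi\colon (\Ro[K]/\Theta)/I_{NS}^{<2n}\twoheadrightarrow \A^*(\Delta)$ which is compatible with the integration maps on the top-degree components, and then invoke Lemma \ref{lemPDinduced} (together with the Novik--Swartz Gorenstein property) to upgrade $\varphi$ to an isomorphism. Once this is done, the statement on dimensions is immediate from the equality $\dim((\Ro[K]/\Theta)/I_{NS}^{<2n})_{2j}=h_j''$ recalled above.

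First I would construct $\varphi$. By Remark \ref{remSimplePropMFAlgebra} there is already a ring epimorphism $H^*(\Delta)=\Ro[K]/\Theta\twoheadrightarrow \A^*(\Delta)$, whose kernel is the subspace $W^*$ from Definition \ref{definPoincarization} applied to $\ca{B}^*=\Ro[K]/\Theta$ and the integration map $\int_\Delta$. To factor this through $(\Ro[K]/\Theta)/I_{NS}^{<2n}$ I need $I_{NS}^{<2n}\subset W^{<2n}$. This is a one-line observation: $I_{NS}\subset\soc(\Ro[K]/\Theta)$, so every $a\in I_{NS}^{2j}$ with $2j<2n$ satisfies $a\cdot(\Ro[K]/\Theta)_{2n-2j}=0$ because $\Ro[K]/\Theta)_{2n-2j}\subset (\Ro[K]/\Theta)_+$; in particular $\int_\Delta(a\cdot b)=0$ for every $b$ of complementary degree, so $a\in W^{2j}$. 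This yields the desired surjection $\varphi$.

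Next I would verify the hypotheses of Lemma \ref{lemPDinduced}. Both source and target are graded commutative algebras of formal dimension $2n$: $\A^*(\Delta)$ is a Poincare duality algebra by the construction in Section \ref{secPDAmultifan}, and $(\Ro[K]/\Theta)/I_{NS}^{<2n}$ is Gorenstein, hence a Poincare duality algebra, by the Novik--Swartz theorem quoted above. The top-degree components are both canonically identified with a one-dimensional space on which the integration maps agree, because $\varphi$ is an isomorphism in degree $2n$ (both algebras agree with $(\Ro[K]/\Theta)_{2n}$ there, since $I_{NS}^{<2n}$ is taken only in degrees $<2n$ and $W^{2n}=0$ by non-degeneracy of $\int_\Delta$ on the top), and the integration map on $\A^*(\Delta)$ is, by construction, induced from $\int_\Delta$ on $H^*(\Delta)$. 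Thus Lemma \ref{lemPDinduced} applies and $\varphi$ is an isomorphism.

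The step I expect to be the genuine obstacle is the compatibility of the integration maps in degree $2n$, i.e.\ verifying that the ``fundamental class'' chosen in the Novik--Swartz Gorenstein quotient is the same as the one coming from $\int_\Delta$ up to a non-zero scalar. The point is that both one-dimensional spaces in degree $2n$ are quotients of $(\Ro[K]/\Theta)_{2n}$ and the Novik--Swartz quotient does not touch this top component, so any two non-zero linear functionals on it differ by a scalar; rescaling one of them if necessary, we can arrange equality, which is all that Lemma \ref{lemPDinduced} requires (its isomorphism conclusion is invariant under such rescalings). The rest is bookkeeping: the dimension count $\dim\A^{2j}(\Delta)=\dim((\Ro[K]/\Theta)/I_{NS}^{<2n})_{2j}=h_j''$ falls out of the isomorphism and Schenzel's formula combined with \eqref{eqDefHtwoprimes}.
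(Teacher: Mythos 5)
Your proposal is correct and follows essentially the same route as the paper: both observe that $I_{NS}^{<2n}\subset\soc(\Ro[K]/\Theta)$ is annihilated by $\Ro[m]_+$ and hence lies in the kernel $W^{<2n}$ of the intersection pairing, giving the epimorphism $(\Ro[K]/\Theta)/I_{NS}^{<2n}\twoheadrightarrow\A^*(\Delta)$, which is then an isomorphism because both sides are Poincare duality algebras (Lemma \ref{lemPDinduced}). Your extra care about matching the two integration maps in degree $2n$ up to a scalar is a reasonable elaboration of a point the paper leaves implicit, but it does not change the argument.
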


In this case Poincare duality implies the well-known generalized
\emph{Dehn--Sommerville relations for oriented homology
manifolds}: $h_j''=h_{n-j}''$ (see \cite{NS} and references
therein).

%
%

\subsection{General situation}\label{subsecStructGeneral}

Let $\Delta$ be an arbitrary complete multi-fan. In the largest
generality we do not have a combinatorial description for the
dimensions of graded components of the multi-fan algebra.

\begin{conj}\label{conjRigidity}
Let $w_{ch}$ be a simplicial cycle, $\lambda\colon [m]\to V$ a
characteristic map, and $\Delta=(w_{ch},\lambda)$ the
corresponding complete multi-fan. The numbers
$d_j=\dim\A^{2j}(\Delta)$ do not depend on $\lambda$.
\end{conj}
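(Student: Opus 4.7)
The plan is to regard $d_j$ as a function $d_j(\lambda)$ on the parameter space
\[
\Lambda_K := \{\lambda\colon[m]\to V \mid \lambda \text{ satisfies the $*$-condition w.r.t.\ } K\},
\]
where $K=\supp(w_{ch})$. Since the $*$-condition only requires non-vanishing of certain minors of $\lambda$, the set $\Lambda_K$ is a Zariski-open (hence connected and irreducible) subvariety of $\Hom(\Ro^m,V)$, and it suffices to prove that $d_j$ is constant on $\Lambda_K$.

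\textbf{Step 1: lower semicontinuity.} By Proposition \ref{propPDdescription}, $d_j(\lambda)$ equals the rank of the multiplication pairing
\[
M_j(\lambda)\colon \D_{2j}\otimes \D_{2n-2j}\to\Ro,\qquad (D_1,D_2)\mapsto (D_1D_2)V_\Delta.
\]
By the index-map formula \eqref{eqDefIndexMap} the coefficients of $V_\Delta$ are rational functions of the entries of $\lambda$, with denominators $|\det\lambda_I|$ that do not vanish on $\Lambda_K$. Hence the Gram matrix of $M_j(\lambda)$ has regular entries, and $d_j(\lambda)=\rk M_j(\lambda)$ is lower semicontinuous. In particular $d_j$ attains its minimum value $d_j^{\min}$ on a Zariski-open dense $U_j\subset\Lambda_K$, and the conjecture reduces to ruling out jumps on $\Lambda_K\setminus U_j$.

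\textbf{Step 2: combinatorial upper bound.} By Remark \ref{remSimplePropMFAlgebra} and Definition \ref{definPoincarization} one has $\A^*(\Delta)=(\Ro[K]/\Theta_\lambda)/W_\lambda^{<2n}$, where $W_\lambda^{<2n}$ is the piece of the pairing kernel in degrees below $2n$, so
\[
d_j(\lambda)=\dim(\Ro[K]/\Theta_\lambda)_{2j}-\dim W_\lambda^{2j}.
\]
The aim is to produce a $\lambda$-independent description of both terms on the right. The model is Theorem \ref{thmStructManif}: when $K$ is an orientable homology manifold, $W_\lambda^{<2n}$ coincides with the Novik--Swartz submodule $I_{NS}^{<2n}$, whose graded dimensions $\binom{n}{j}\br_{j-1}(K)$ are purely topological, and $d_j=h''_j(K)$ is a combinatorial invariant. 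I would attempt to generalize this using the full hierarchy of projected multi-fans $\{\Delta_I\}_{I\in K}$, all of which are complete by Definition \ref{defCompleteMultifan}, and their underlying homology classes $[\Delta_I]\in\Hr_{n-1-|I|}(\lk_KI;\Ro)$: build from this data a canonical $\Ro[K]$-submodule whose graded dimensions depend only on $w_{ch}$ and coincide with those of $W_\lambda^{<2n}$. Combined with Step 1 and the Poincare duality $d_j=d_{n-j}$ that holds for every $\lambda$, such a combinatorial bound would pin down $d_j$ uniquely.

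\textbf{Main obstacle.} Step 2 is the crux of the matter. Outside the Buchsbaum setting $\soc(\Ro[K]/\Theta_\lambda)$ is sensitive to the choice of $\lambda$ and need not coincide with $W_\lambda^{<2n}$; even the individual dimensions $\dim(\Ro[K]/\Theta_\lambda)_{2j}$ can jump with $\lambda$ on a thin set. A proof therefore seems to require a new chain-level or index-map-theoretic construction of $W_\lambda^{<2n}$ --- plausibly via a spectral sequence built from the $\Delta_I$ and their weights, or via a flat/Gr\"obner degeneration identifying a combinatorial generic fiber --- and I expect this to be the principal technical difficulty, which is presumably why the statement is phrased as a conjecture rather than a theorem.
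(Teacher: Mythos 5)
This statement is posed in the paper as Conjecture \ref{conjRigidity}; the paper offers no proof of it, and explicitly says just before stating it that ``in the largest generality we do not have a combinatorial description for the dimensions of graded components of the multi-fan algebra.'' So there is no argument of the authors to compare yours against, and what you have written is not a proof either: you acknowledge yourself that Step 2 --- producing a $\lambda$-independent description of $W_\lambda^{<2n}$ outside the homology-manifold case --- is the entire content of the problem and is left open. A reduction of an open conjecture to an unproved ``combinatorial upper bound'' that you only describe programmatically is a research plan, not a proof, and it should be presented as such.

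Two concrete technical points in Step 1 also need repair if you pursue this route. First, $\Lambda_K$ is the complement of finitely many real hypersurfaces $\{\det\lambda_I=0\}$ in $\Hom(\Ro^m,V)$; over $\Ro$ such a complement is Zariski-open but generally \emph{disconnected} (already $\Ro\setminus\{0\}$ is), so ``Zariski-open hence connected'' fails, and constancy of a locally constant function would not follow even if you controlled the jumps. Second, the rank of a pairing whose Gram matrix depends continuously on $\lambda$ is lower semicontinuous, which means the \emph{maximum} (not the minimum) of $d_j$ is attained on an open dense subset; the special loci are where the rank can only drop. The semicontinuity observation is still potentially useful --- combined with an everywhere-valid combinatorial \emph{lower} bound matching the generic value it would close the argument --- but as written the inequality points the wrong way. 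Note also that the coefficients of $V_\Delta$ involve $|\det\lambda_I|$, so they are only piecewise rational on $\Lambda_K$, which is harmless for continuity but relevant if you want to argue Zariski-locally.
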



%
%
%
%
%
%

\section{Geometry of multi-polytopes and Minkowski
relations}\label{secMinkRels}

Here we give another proof of Theorem \ref{thmStructManif} which
shows the geometrical nature of the elements lying in the socle of
$\Ro[K]/\Theta$ when $K$ is an oriented homology manifold. It
relates on explicit computations in coordinates but reveals an
interesting connection with \emph{the Minkowski type relations},
appearing in convex geometry. Recall the basic Minkowski theorem
on convex polytopes.

\begin{thmNo}[Minkowski]\label{thmMinkow}
(1) (Direct) Let $P$ be a convex full-dimensional polytope in
euclidian space $\Ro^n$. Let $V_1,\ldots,V_m$ be the
$(n-1)$-volumes of facets of $P$ and
$\mathbf{n}_1,\ldots,\mathbf{n}_m$ be the outward unit normal
vectors to facets. Then $\sum_iV_i\mathbf{n}_i=0$ (the Minkowski
relation).

(2) (Inverse). Let $\mathbf{n}_1,\ldots,\mathbf{n}_m$ be the
vectors of unit lengths, spanning $\Ro^n$, and let
$V_1,\ldots,V_m$ be positive numbers satisfying the Minkowski
relation. Then there exists a convex polytope $P$ whose facets
have outward normal vectors $\mathbf{n}_i$ and volumes $V_i$. Such
polytope is unique up to parallel shifts.
\end{thmNo}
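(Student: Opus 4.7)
The plan is to prove the two parts separately: part (1) in a few lines using the volume-polynomial machinery of Section~\ref{secBasicPropVolPoly}, and part (2) by the classical variational argument of Minkowski.

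For (1), view $P$ as a simple multi-polytope based on its normal fan $\Delta$, with characteristic function $\lambda(i)=\mathbf{n}_i$ (the unit outward normals). Lemma~\ref{lemDerivsOfVolPol}(1) asserts $\theta_uV_\Delta=0$ in $\Ro[c_1,\ldots,c_m]$ for every $u\in V^*$, where $\theta_u=\sum_i\langle u,\mathbf{n}_i\rangle\dd_i$. Evaluate this identity at the support parameters $(\tilde c_1,\ldots,\tilde c_m)$ of $P$: Lemma~\ref{lemDerivsOfVolPol}(3) gives $\dd_iV_\Delta(\tilde c)=\vol(F_i)/\covol(i)=V_i$ because $\|\mathbf{n}_i\|=1$. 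Thus $\sum_iV_i\langle u,\mathbf{n}_i\rangle=0$ for every $u\in V^*$, which forces $\sum_iV_i\mathbf{n}_i=0$. (The same identity is of course also an immediate application of the divergence theorem componentwise to the boundary of~$P$.)

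For (2), given unit vectors $\mathbf{n}_1,\ldots,\mathbf{n}_m$ spanning $\Ro^n$ and positive reals $V_1,\ldots,V_m$ with $\sum_iV_i\mathbf{n}_i=0$, set $P(c)=\{x\in\Ro^n\mid\langle x,\mathbf{n}_i\rangle\leqslant c_i\text{ for all }i\}$ and consider the linear functional $\Phi(c)=\sum_ic_iV_i$. The Minkowski relation makes $\Phi$ invariant under the translation action $c_i\mapsto c_i+\langle t,\mathbf{n}_i\rangle$ on support parameters. I would minimize $\Phi$ on the constraint set $\{c\mid\vol(P(c))\geqslant 1\}$ modulo translations; a coercivity estimate (using positivity of the $V_i$ together with the fact that the $\mathbf{n}_i$ span $\Ro^n$) combined with Blaschke's selection theorem produces a minimizer $c^*$. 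At $c^*$ the Lagrange condition together with the classical formula $\dd\vol(P(c))/\dd c_i=\vol(F_i(c))$ (the specialization of Lemma~\ref{lemDerivsOfVolPol}(3) to convex polytopes) yields $V_i=\mu\vol(F_i(c^*))$ for a single constant $\mu>0$; rescaling $P(c^*)$ by $\mu^{1/(n-1)}$ gives a polytope with the prescribed facet areas. Uniqueness up to translation then follows from the equality case of the Brunn--Minkowski inequality: any two solutions must be homothetic, and having equal facet areas and normals forces the homothety ratio to be $1$, so they are translates.

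The principal obstacle is the inverse direction, and within it the most delicate technical point is showing that the minimizer $c^*$ lies in the open stratum where every $\mathbf{n}_i$ actually realizes a full-dimensional facet of $P(c^*)$ rather than becoming redundant. If some facet were to degenerate at the minimum, one would need to produce a perturbation $c^*+\epsilon v$ that keeps $\vol(P(c))\geqslant 1$ while strictly decreasing $\Phi$, exploiting the positivity of $V_i$ and the Minkowski relation for the limiting polytope; carrying this out rigorously is standard in convex geometry but not entirely trivial. Coercivity of $\Phi$ modulo translations and the strict form of Brunn--Minkowski (equality iff homothetic) are the other essential analytic inputs.
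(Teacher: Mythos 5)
The paper does not actually prove this statement: it is recalled as a classical theorem (the text immediately afterwards says that part (1) ``is fairly simple'' and treats part (2) as known), so there is no proof of record to compare against line by line. What the paper does prove is the generalization of the direct part to cycles of multi-polytopes, Theorem \ref{thmMinkowGener}, and it does so by a different mechanism: the wall-crossing formula (Lemma \ref{lemWallCrossing}) shows that the combined Duistermaat--Heckman function $\Dh_A$ of a cycle is locally constant, hence identically zero by compactness of support, giving $\sum_s a_s\vol(Q_s)=0$ in top dimension, and the lower-dimensional case follows by orthogonal projection. Your proof of part (1) via $\theta_uV_\Delta=0$ together with $\dd_iV_\Delta=\vol(F_i)/\covol(i)$ is correct and is in fact exactly the computation the paper performs later, in subsection \ref{subsecInverseMink}, to derive the relation \eqref{eqMinkMFan}; so your route is a legitimate special case of the paper's machinery, just entering through Lemma \ref{lemDerivsOfVolPol} rather than through wall-crossing. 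One small caveat: Lemma \ref{lemDerivsOfVolPol} requires the multi-fan to be simplicial, i.e.\ $P$ to be a simple polytope, whereas the classical statement allows arbitrary convex polytopes; you should either approximate by simple polytopes (both sides of the relation are continuous in the Hausdorff metric) or fall back on the divergence-theorem argument you mention parenthetically.

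For part (2) your variational scheme (minimize $\sum_ic_iV_i$ on $\{\vol(P(c))\geqslant 1\}$, Lagrange condition via $\dd\vol/\dd c_i=\vol(F_i)$, uniqueness from the equality case of Brunn--Minkowski, or more precisely of Minkowski's first inequality for mixed volumes) is the standard classical proof and is sound, with the genuinely delicate points correctly identified: coercivity modulo translations and non-degeneracy of every facet at the minimizer. The paper has no counterpart to this argument; on the contrary, it shows by the cube example in subsection \ref{subsecInverseMink} that the naive inverse statement fails for multi-polytopes and replaces it by the criterion of Corollary \ref{corInverseMink} in terms of the power map $\A^2(\Delta)\to\A^{2n-2}(\Delta)$. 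So your proposal supplies a proof where the paper deliberately cites one, and its first half is consistent with the paper's own later use of the volume polynomial.
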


Usually only part (2) is called Minkowski theorem, since part (1)
is fairly simple. The direct Minkowski theorem has a
straightforward generalization.

\begin{thm}\label{thmMinkowGener}
Let $\sum_s a_s Q_s$ be a collection of $k$-dimensional
multi-polytopes in euclidian space $\Ro^n$, forming a closed
orientable cycle. Let $\vol(Q_s)$ be the $k$-volume, and $\nu_s\in
\Lambda^{n-k}\Ro^n$ be the unit normal skew form of the
multi-polytope $Q_s$. Then there holds a relation
$\sum_sa_s\vol(Q_s)\nu_s=0$ in $\Lambda^{n-k}\Ro^n$.
\end{thm}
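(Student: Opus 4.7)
The plan is to Hodge-dualize the claim and then reduce it to an integral identity which follows from Stokes' theorem. Using the inner product and orientation on $\Ro^n$, the unit normal skew form $\nu_s \in \Lambda^{n-k}\Ro^n$ corresponds, up to a consistent sign, to the unit oriented $k$-volume form $\omega_s \in \Lambda^k\Ro^n$ of the affine span of $Q_s$, and the target identity becomes $\sum_s a_s\vol(Q_s)\omega_s = 0$. By pairing against constant $k$-forms, it suffices to show $\sum_s a_s\int_{Q_s}\omega = 0$ for every constant $\omega \in \Lambda^k(\Ro^n)^*$, since the restriction of such $\omega$ to the affine span of $Q_s$ equals $\langle\omega,\omega_s\rangle$ times the Euclidean volume form there, and integrating against the Duistermaat--Heckman function $\Dh_{Q_s}$ yields $\int_{Q_s}\omega = \langle\omega,\omega_s\rangle\vol(Q_s)$.

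To close the argument I would invoke Stokes' theorem. Since $\Ro^n$ is contractible, every constant $k$-form is exact: $\omega = d\eta$ for some polynomial $(k-1)$-form $\eta$. Stokes' theorem for multi-polytopes---which follows from integration by parts against the piecewise-constant function $\Dh_{Q_s}$, whose distributional exterior derivative across each facet hyperplane reproduces the Duistermaat--Heckman function of the corresponding facet---yields $\int_{Q_s}d\eta = \int_{\partial Q_s}\eta$, where $\partial Q_s$ denotes the signed facet boundary. The hypothesis that $\sum_s a_s Q_s$ is a closed orientable cycle means precisely that $\sum_s a_s\partial Q_s = 0$, so the boundary integrals cancel and the desired identity $\sum_s a_s\vol(Q_s)\nu_s = 0$ follows.

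The main obstacle is a clean justification of Stokes' formula for multi-polytopes: one must verify that the distributional exterior derivative of $\Dh_{Q_s}$ is a sum of $(k-1)$-currents supported on the facet hyperplanes, with densities given, up to orientation signs, by the Duistermaat--Heckman functions of the facets $F_i$. This is a direct computation from the signed-indicator-of-cones expression for $\Dh_{Q_s}$ recalled in subsection~\ref{subsecDHfunc}. An alternative, purely algebraic route is to apply the linear relations $\theta_u V_\Delta = 0$ from Lemma~\ref{lemDerivsOfVolPol} to the iterated derivatives $\partial_J V_\Delta$: this recovers the Minkowski-type relations among the codimension-$k$ faces of an individual multi-polytope, and a general closed cycle reduces by linearity to a combination of such face cycles.
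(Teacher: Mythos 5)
Your main argument is sound and reaches the conclusion by a genuinely different route from the paper's. The paper first proves the top-dimensional case $k=n$ directly: for a cycle $A=\sum_s a_sQ_s$ it forms $\Dh_A=\sum_s a_s\Dh_{Q_s}$ and uses the wall-crossing formula (Lemma \ref{lemWallCrossing}, quoted from \cite{HM}) together with $dA=0$ to show that $\Dh_A$ has no jumps across any wall, hence is locally constant and compactly supported, hence zero; integrating gives $\sum_s a_s\vol(Q_s)=0$. The general case is then reduced to $k=n$ by orthogonally projecting the cycle onto a generic oriented $k$-plane $\Pi$ and using $\vol(\Gamma(Q_s))=\vol(Q_s)\langle\nu_s,\nu_\Pi\rangle$. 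Your reduction --- pairing $\sum_s a_s\vol(Q_s)\omega_s$ against constant $k$-forms --- is essentially dual to that projection step, so the two proofs really differ only in how they dispose of the resulting scalar identity: the paper's wall-crossing/locally-constant argument versus your Stokes argument with $\omega=d\eta$ and cancellation of the boundary terms $\int_{\partial Q_s}\eta$ via $dA=0$ (which is legitimate, since $\int_F\eta$ depends only on $F$ as a generator of $\GMP_{k-1}$ and changes sign under orientation reversal). The Stokes formula for multi-polytopes that you flag as the main obstacle is a real gap in the writeup, but it is exactly the content of the wall-crossing lemma --- the distributional derivative of $\Dh_{Q_s}$ is supported on the walls with densities the facet Duistermaat--Heckman functions --- so your route needs the same external input as the paper's, plus careful sign bookkeeping to match the paper's orientation convention for facets; what it buys is a uniform treatment of the case $k=n$ and of the reduction step. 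One caution: your proposed ``purely algebraic'' alternative via $\theta_uV_\Delta=0$ applied to $\dd_JV_\Delta$ only yields Minkowski relations among the faces of a single multi-polytope containing a fixed face $F_J$, and the assertion that an arbitrary closed cycle of $k$-dimensional multi-polytopes decomposes into such face cycles is unjustified; note also that the paper runs the logic in the opposite direction in subsection \ref{subsecMinkRelations}, deriving relations in $\A^*(\Delta)$ from this theorem rather than conversely.
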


In the next subsection we explain the precise meaning of the terms
used in the statement and give the proof.

%
%

\subsection{Cycles of multi-polytopes}\label{subsecCyclesMPoly}

As before, $V^*\cong \Ro^n$ denotes the ambient affine space of
$n$-dimensional polytopes, coming with fixed orientation. Let
$k\leqslant n$ and let $\Pi$ be an oriented $k$-dimensional affine
subspace of $V^*$. Let $Q$ be a $k$-dimensional multi-polytope in
$\Pi$. Then $Q$ will be called a $k$-dimensional multi-polytope in
$V^*$.

First let $k>0$. Denote by $\GMP_k$ the group (or a vector space
over $\Ro$) freely generated by all $k$-dimensional
multi-polytopes in $V^*$, where we identify the element
$\overline{Q}$ (i.e. $Q$ with reversed orientation of the
underlying subspace) and $-Q$. If $k=0$, the multi-polytope is
just a point with weight. In this case let $\GMP_0$ denote the
group of formal sums of points whose weights sum to zero. Formally
set $\GMP_{-1}=0$. Define the differential $d\colon \GMP_k\to
\GMP_{k-1}$ by setting
\[
dQ:=\sum_{F_i :\mbox{ facet of }Q} F_i,
\]
and extending by linearity. Note that each facet comes with the
canonical orientation: we say that the hyperplane $H_i$ containing
$F_i$ is positively oriented if
\[
(\mbox{a positive basis of } H_i, \lambda(i))
\]
is a positive basis of $V$. Thus the expression above is well
defined.

\begin{defin}
An element $A=\sum_s a_sQ_s\in \GMP_k$ which satisfies $dA=0$ is
called \emph{a cycle of $k$-dimensional multi-polytopes}.
\end{defin}

As in Section \ref{secBasicPropVolPoly}, assume that there is a
fixed inner product in $V$. This allows to define the inner
product on the skew forms. In particular, if $\Pi$ is an oriented
affine $k$-subspace in $V^*\cong V$, we may define its \emph{unit
normal skew} form $\nu_{\Pi}\in \Lambda^{n-k}V$ as the unique
element of $\Lambda^{n-k}\Pi^\bot\cong\Ro$ which corresponds to
the positive orientation of $\Pi^\bot$ and satisfies
$\|\nu_{\Pi}\|=1$. It is easy to see that if $\dim\Pi=n-1$, the
form $\nu_{\Pi}$ is just the positive unit normal vector to $\Pi$.

Let us prove Theorem \ref{thmMinkowGener}.

\begin{proof}
The idea of proof is straightforward and quite similar to the
proof of classical Minkowski theorem: at first we prove the case
$k=n$, then reduce the general case to the case $k=n$ by
projecting $\sum_sa_s\vol(Q_s)\nu_s$ to all possible
$k$-subspaces. Note that the case $n=0$ should be treated
separately, but in this case the statement is trivial.

(1) Suppose $k=n$. Then all multi-polytopes $Q_s$ are
full-dimensional. Their underlying subspaces $\Pi_s$ coincide with
$V$ up to orientation. Without loss of generality assume that all
orientations coincide with that of $V$. Normal skew forms lie in
$\Lambda^0V\cong\Ro$ and are equal to $1$. Hence we need to prove
that $\sum a_s\vol(Q_s)=0$ for any cycle of $n$-multi-polytopes.
Recall the wall-crossing formula \cite[Lemma 5.3]{HM}:

\begin{lemma}\label{lemWallCrossing}
Let $P$ be a multi-polytope and $H=H_i$ be one of the supporting
hyperplanes: $H=H_i$. Let $u_\alpha$ and $u_\beta$ be elements in
$V^*\setminus \bigcup_{i=1}^mH_i$ such that the segment from
$u_\alpha$ to $u_\beta$ intersects the wall $H$ transversely at
$\mu$, and does not intersect any other $H_j\neq H$. Then
\[
\Dh_P(u_\alpha)- \Dh_P(u_\beta) = \sum_{i:H_i=H} \sgn\langle
u_\beta-u_\alpha, \lambda(i)\rangle \Dh_{F_i}(\mu),
\]
where $F_i$ is the facet of $P$, and $\Dh_{F_i}\colon H_i\to \Ro$
is its Duistermaat--Heckman function.
\end{lemma}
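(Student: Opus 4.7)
The plan is to unpack both sides using the definition $\Dh_P=\sum_{I\in K\indn}(-1)^I w(I)\phi_I$ and compare the jumps term by term. The key geometric observation is that the cone $C^*(I)^+$ with apex at the vertex $H_I$ has its facets lying precisely in the hyperplanes $H_j$ for $j\in I$: indeed, the facet opposite $(u_j^I)^+$ is contained in $\{p\in V^*\mid\langle p,\lambda(i)\rangle=c_j\}=H_j$ because $\langle (u_k^I)^+,\lambda(j)\rangle=0$ for $k\neq j$. Consequently $\phi_I$ is locally constant across any wall $H_j$ with $j\notin I$, so only those $I$ containing an index $i$ with $H_i=H$ contribute to the difference $\Dh_P(u_\alpha)-\Dh_P(u_\beta)$, and the $*$-condition forces this $i$ to be unique for each contributing $I$.

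Next, for a fixed pair $(I,i)$ with $I=\{i\}\sqcup J$, I would compute the local jump of $\phi_I$ at $\mu$. Writing $p=H_I+\sum_{j\in I}r_j(u_j^I)^+$ with $r_j\geqslant 0$ gives $\langle p-\mu,\lambda(i)\rangle=r_i\epsilon_i^I$, where $\epsilon_i^I:=\sgn\langle u_i^I,v\rangle=\sgn\langle (u_i^I)^+,\lambda(i)\rangle$. Thus the cone $C^*(I)^+$ lies on the side of $H$ where $\langle\cdot-\mu,\lambda(i)\rangle$ has sign $\epsilon_i^I$, and its trace on $H$ is the facet $F_i^I:=H_I+\Cone\{(u_j^I)^+\mid j\in J\}$. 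Parametrising $\mu=u_\alpha+t_0(u_\beta-u_\alpha)$ with $0<t_0<1$ then yields, by a direct case check,
\[
\phi_I(u_\alpha)-\phi_I(u_\beta)=-\epsilon_i^I\,\sgn\langle u_\beta-u_\alpha,\lambda(i)\rangle\cdot\mathbf{1}_{F_i^I}(\mu).
\]

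Finally, I would identify the resulting sum with $\Dh_{F_i}(\mu)$ computed from the projected multi-fan $\Delta_i=(\lk_K\{i\},w_i,\lambda_i)$, using the induced generic vector $[v]\in V_i$ (which is generic because $v$ is). Under the identification $V_i^*\cong H_i$ one checks that $u_j^{J,(i)}=u_j^I$ for $j\in J$, so the cone $C^*(J)^{+,(i)}$ appearing in $\Dh_{F_i}$ coincides with the facet $F_i^I$. Counting the orientation signs gives $(-1)^I=-\epsilon_i^I(-1)^J$, whence
\[
\Dh_{F_i}(\mu)=\sum_{J\in(\lk_K\{i\})\indn}(-1)^J w(\{i\}\sqcup J)\mathbf{1}_{F_i^I}(\mu)=-\sum_{I\ni i,\,I\in K\indn}(-1)^I w(I)\epsilon_i^I\mathbf{1}_{F_i^I}(\mu).
\]
Substituting into the term-by-term jump formula and summing over $i$ with $H_i=H$ gives the claimed identity.

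The main obstacle is the sign bookkeeping, since three signs interact --- the orientation sign $(-1)^I$ built into the definition of $\Dh_P$, the side-of-wall sign $\epsilon_i^I$ telling which half-space the cone $C^*(I)^+$ occupies, and the crossing-direction sign $\sgn\langle u_\beta-u_\alpha,\lambda(i)\rangle$. They must align so that the intrinsic Duistermaat--Heckman function of the face $F_i$ (defined with its own generic vector) emerges on the right-hand side; the clean relation $(-1)^I=-\epsilon_i^I(-1)^J$ is what makes this work. One also has to verify that the construction of $\Dh_{F_i}$ is independent of the auxiliary choice of generic vector in $V_i$, but this is part of the setup of Section \ref{subsecDHfunc} and is not specific to the wall-crossing situation.
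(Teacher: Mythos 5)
The paper does not actually prove this lemma: it is quoted verbatim from Hattori--Masuda (\cite[Lemma 5.3]{HM}) inside the proof of Theorem \ref{thmMinkowGener}, so there is no internal argument to compare against. Your proposal supplies a correct self-contained derivation from the definition $\Dh_P=\sum_{I\in K\indn}(-1)^Iw(I)\phi_I$ of Section \ref{subsecDHfunc}, along the same lines as the original proof in \cite{HM}. The essential points all check out: the boundary of $C^*(I)^+$ lies in $\bigcup_{j\in I}H_j$ because $\langle u_k^I,\lambda(j)\rangle=0$ for $k\neq j$, so only simplices $I$ containing some $i$ with $H_i=H$ can contribute, and this $i$ is unique since two characteristic vectors normal to the same hyperplane would be parallel, violating the $*$-condition; since $\mu$ avoids every $H_j$ with $j\in I\setminus\{i\}$, the point $\mu$ is either outside $\overline{C^*(I)^+}$ or in the relative interior of the facet $F_i^I$, which justifies the jump formula $\phi_I(u_\alpha)-\phi_I(u_\beta)=-\epsilon_i^I\,\sgn\langle u_\beta-u_\alpha,\lambda(i)\rangle\,\mathbf{1}_{F_i^I}(\mu)$; and under the identification of $V_i^*$ with the annihilator of $\lambda(i)$ one indeed gets $u_j^{J,(i)}=u_j^I$ together with the sign relation $(-1)^I=-\epsilon_i^I(-1)^J$, so the inner sum assembles exactly into $\Dh_{F_i}(\mu)$. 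Two small remarks: in your first paragraph the facet opposite $(u_j^I)^+$ is cut out by $\langle p,\lambda(j)\rangle=c_j$, not $\langle p,\lambda(i)\rangle=c_j$ (a typo); and since the lemma is stated for an arbitrary multi-polytope, $\Dh_{F_i}$ is a priori defined only relative to a choice of generic vector in $V_i$ unless $\Delta_i$ is complete --- your convention of using the induced vector $[v]$ is the correct reading and is all that is needed for the application.
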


Consider a cycle of multi-polytopes $A=\sum_{s=1}^l a_sQ_s$. Let
$\ca{H}$ denote the set of all supporting hyperplanes of all
polytopes $Q_s$, $s=1,\ldots,l$. We have a function
\[
\Dh_A\colon V^*\setminus \bigcup_{H\in\ca{H}}H \to \Ro, \quad
\Dh_A:=\sum_{s=1}^la_s\Dh_{Q_s}.
\]
Let us choose a hyperplane $H\in \ca{H}$ and two points $u_\alpha$
and $u_\beta$ in $V^*\setminus \bigcup_{H\in\ca{H}}H$ such that
the segment from $u_\alpha$ to $u_\beta$ intersects the wall $H$
transversely at $\mu$ and does not intersect any other wall from
$\ca{H}$. Let us sum the differences $\Dh_P(u_\alpha)-
\Dh_P(u_\beta)$ taken with coefficients $a_s$ over all
multi-polytopes $Q_s$ for which $H$ is a supporting hyperplane.
Since $dA=0$, Lemma \ref{lemWallCrossing} implies that this sum is
zero. Obviously, this sum equals $\Dh_A(u_\alpha)-
\Dh_A(u_\beta)$.

This argument shows that crossing of any wall does not change the
value of $\Dh_A$. Therefore, $\Dh_A$ is constant (where it is
defined). Since $\Dh_A$ has compact support, it must be constantly
zero. Thus
\[
\sum a_s\vol(Q_s)=\int_{V^*}\Dh_A=0.
\]

(2) Let us prove the theorem for general $k$. Consider a generic
oriented $k$-subspace $\Pi\subset V^*$ and let
$\nu\in\Lambda^{n-k}V^*$ be its normal skew form. Let
$\Gamma\colon V^*\to \Pi$ be the orthogonal projection. Then the
image of $Q_s$ under $\Gamma$ is a full-dimensional multi-polytope
in $\Pi$, which we denote by $\Gamma(Q_s)$. The sum
$\sum_{s=1}^la_s\Gamma(Q_s)$ is a cycle of $k$-dimensional
multi-polytopes in $\Pi$. Therefore, step (1) implies
\[
\sum_{s=1}^la_s\vol(\Gamma(Q_s))=0.
\]
By the standard property of orthogonal projections we have
\[
\vol(\Gamma(Q_s))=\vol(Q_s)\cdot\langle \nu_s,\nu\rangle.
\]
Hence
\[
\Big\langle \sum_{s=1}^la_s\vol(Q_s)\nu_s, \nu \Big\rangle = 0,
\]
and this holds for any generic skew form $\nu$. Thus
$\sum_{s=1}^la_s\vol(Q_s)\nu_s=0$ which was to be proved.
\end{proof}

%
%

\subsection{Relations in $\A^*(\Delta)$ as Minkowski
relations}\label{subsecMinkRelations}

Let $K$ be an oriented homology $(n-1)$-manifold and $\Delta$ be a
multi-fan based on $K$. Suppose that every simplex $I\in K$ is
oriented somehow. This defines an orientation of each subspace
$H_I=\bigcap_{i\in I}H_i$ (for example, by the rule ``positive
orientation of $H_i$''$\oplus
\lambda(i_1)\oplus\cdots\oplus\lambda(i_k)$ is a positive
orientation of $V$ if $(i_1,\ldots,i_k)$ is a positive order of
vertices of $I$). Recall from Section \ref{secBasicPropVolPoly}
that $\lambda(I)$ denotes the skew form $\bigwedge_{i\in
I}\lambda(i)$ and $\covol(I)=\|\lambda(I)\|$. Consider an
arbitrary skew form $\mu\in \Lambda^kV^*$ and let
\[
\lambda(I)_\mu:=\langle\lambda(I),\mu\rangle.
\]

Let $C^k(K;\Ro)$, $0\leqslant k\leqslant n-1$ denote the group of
cochains on $K$ and $\delta\colon C^k(K;\Ro)\to C^{k+1}(K;\Ro)$ be
the standard cochain differential. We also need to augment the
cochain complex in the top degree, so we formally set
$C^n(K;\Ro):= \Ro$ and let $\delta\colon C^{n-1}(K;\Ro)\to
C^n(K;\Ro)$ be the evaluation of a cochain on the fundamental
chain of $K$.

An element $a\in C^{k-1}(K;\Ro)$, $k\leqslant n$ will be called a
(coaugmented) cocycle if $\delta a=0$. Then, since $K$ is an
oriented manifold, the Poincare dual $\sum_{I:|I|=k}a(I)F_I$ of
$a$ is a cycle of $(n-k)$-dimensional multi-polytopes in $V^*$.
(Notice that in the case $k=n$ we get a formal sum of points whose
weights sum is zero. If we do not require that $a$ is coaugmented,
then we do not get a cycle of $0$-dimensional multi-polytopes).

\begin{prop}
For any coaugmented cocycle $a\in C^{k-1}(K;\Ro)$ and any $\mu\in
\Lambda^kV$ there exists a relation
\[
\sum_{I:|I|=k}a(I)\lambda(I)_\mu\dd_I = 0
\]
in $\A^*(\Delta)\cong (\Ro[K]/\Theta)/I_{NS}^{<2n}$.
\end{prop}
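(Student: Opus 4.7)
Since $\A^*(\Delta)\cong\D/\Ann V_\Delta$, to establish the claimed relation it suffices to show that the differential operator
$D:=\sum_{|I|=k}a(I)\lambda(I)_\mu\dd_I$
annihilates the volume polynomial $V_\Delta$. As $DV_\Delta$ is a polynomial in $c_1,\dots,c_m$, it vanishes identically if and only if $(DV_\Delta)(\tilde c)=0$ at every point $\tilde c=(\tilde c_1,\dots,\tilde c_m)\in\Ro^m$, viewed as the support parameters of some multi-polytope $P$ based on $\Delta$. The strategy is to evaluate $DV_\Delta$ at an arbitrary such $P$ and recognize the result as a pairing of $\mu$ with a Minkowski-type sum that vanishes by Theorem \ref{thmMinkowGener}.

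For the evaluation: by Lemma \ref{lemDerivsOfVolPol}(3), $(\dd_I V_\Delta)(\tilde c)=\vol(F_I)/\covol(I)$ for $|I|=k<n$ (and equals $w(I)/\covol(I)$ when $k=n$, corresponding to $F_I$ as a signed point). With the orientation convention on $H_I$ recalled at the start of Section \ref{secMinkRels}, the unit normal skew form of the $(n-k)$-face $F_I$ is precisely $\nu_I=\lambda(I)/\covol(I)\in\Lambda^kV$. Consequently
\[
(DV_\Delta)(\tilde c)=\sum_{|I|=k}a(I)\,\langle\lambda(I),\mu\rangle\,\frac{\vol(F_I)}{\covol(I)}=\Big\langle\sum_{|I|=k}a(I)\vol(F_I)\,\nu_I,\;\mu\Big\rangle.
\]

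Now the coaugmentation hypothesis $\delta a=0$ is exactly what makes $A:=\sum_{|I|=k}a(I)F_I$ a cycle in $\GMP_{n-k}$, by the Poincare-duality observation that was made just before the statement. Applying Theorem \ref{thmMinkowGener} to this cycle of $(n-k)$-dimensional multi-polytopes gives $\sum_{|I|=k}a(I)\vol(F_I)\nu_I=0$ in $\Lambda^kV$. Pairing with $\mu$ yields $(DV_\Delta)(\tilde c)=0$ for every choice of support parameters, so $DV_\Delta\equiv 0$ and $D=0$ in $\A^*(\Delta)$.

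The main obstacle is an orientation/sign check, not conceptually hard but needing care: one must verify that the orientation chosen on $H_I$ makes $\lambda(I)/\covol(I)$ equal (not merely proportional to) the oriented normal $\nu_I$ used in Theorem \ref{thmMinkowGener}, and that under this convention the simplicial coboundary $\delta$ on $C^*(K;\Ro)$ transports via Poincare duality of the oriented homology manifold $K$ to the multi-polytope boundary $d$ on $\GMP_*$, so that $\delta a=0$ really gives $dA=0$. The case $k=n$ is covered uniformly: then $\mu\in\Lambda^nV$ is a scalar multiple of the volume form, the $0$-dimensional Minkowski relation degenerates to $\sum_Ia(I)w(I)=0$, and this is precisely $\delta a=0$ with the convention $C^n(K;\Ro)=\Ro$ evaluating cochains on the fundamental chain $w_{ch}$.
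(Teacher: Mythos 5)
Your proposal is correct and follows essentially the same route as the paper's own proof: evaluate $\sum_{I}a(I)\lambda(I)_\mu\dd_I V_\Delta$ at an arbitrary point via Lemma \ref{lemDerivsOfVolPol}, identify $\lambda(I)/\covol(I)$ as the unit normal skew form of $F_I$, invoke the fact that $\delta a=0$ makes $\sum_I a(I)F_I$ a cycle of multi-polytopes, apply Theorem \ref{thmMinkowGener}, and pair with $\mu$. Your additional remarks on the orientation conventions and the degenerate case $k=n$ are sensible elaborations of points the paper leaves implicit.
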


\begin{proof}
Let us apply $\sum_{I:|I|=k}a(I)\lambda(I)_\mu\dd_I$ to the volume
polynomial $V_\Delta$ and evaluate the result at a point
$\bar{c}=(c_1,\ldots,c_m)$:
\[
\sum_{I:|I|=k}a(I)\lambda(I)_\mu\dd_IV_\Delta
|_{\bar{c}}=\sum_{I:|I|=k}a(I)\lambda(I)_\mu\dfrac{\vol(F_I)}{\covol(I)}.
\]
Here we used Lemma \ref{lemDerivsOfVolPol}. Note that the skew
form $\lambda(I)/\covol(I)$ is by definition a unit normal skew
form to the ambient subspace of a multi-polytope $F_I$. Since
$\sum_{I:|I|=k}a(I)F_I$ is a cycle of multi-polytopes, Theorem
\ref{thmMinkowGener} implies
\[
\sum_{I:|I|=k}a(I)\vol(F_I)\dfrac{\lambda(I)}{\covol(I)}=0.
\]
Taking inner product with $\mu$ implies
\[
\sum_{I:|I|=k}a(I)\lambda(I)_\mu\dfrac{\vol(F_I)}{\covol(I)}=0.
\]
Hence the polynomial
$\sum_{I:|I|=k}a(I)\lambda(I)_\mu\dd_IV_\Delta$ evaluates to zero
at any point $\bar{c}$. Therefore it vanishes as a polynomial.
Thus $\sum_{I:|I|=k}a(I)\lambda(I)_\mu\dd_I\in \Ann V_\Delta$
which proves the statement.
\end{proof}

We see that Minkowski theorem allows to construct linear relations
in $\A^*(\Delta)$. Actually these relations exhaust all relations
in $\A^*(\Delta)$. Let us state the result of \cite{AyV3} in terms
of Minkowski relations:

\begin{prop}[\cite{AyV3}]\label{propMinkManifolds}
Let $K$ be an oriented homology manifold.
\begin{enumerate}
\item There is an isomorphism of vector spaces
\[
(\Ro[K]/\Theta)_{2k}\cong \langle x_I\mid I\in K,
|I|=k\rangle/\Big\langle\sum_{I:|I|=k}a(I)\lambda(I)_\mu x_I
\Big\rangle
\]
where $a$ runs over all exact $(k-1)$-cochains on $K$ and $\mu$
runs over $\Lambda^kV$.

\item There is an isomorphism of vector spaces
\[
((\Ro[K]/\Theta)/I_{NS}^{<2n})_{2k}\cong \langle x_I\mid I\in K,
|I|=k\rangle/\Big\langle\sum_{I:|I|=k}a(I)\lambda(I)_\mu x_I
\Big\rangle
\]
where $a$ runs over all coaugmented closed $(k-1)$-cochains on $K$
and $\mu$ runs over $\Lambda^kV$.
\end{enumerate}
\end{prop}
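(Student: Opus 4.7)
The plan is to verify the easy (``forward'') direction of each part by a direct algebraic computation, and then invoke the Hilbert‐function identifications of Schenzel and Novik--Swartz to pin down, via dimension counting, that these Minkowski linear forms exhaust all the relations.

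For the forward direction of (1), I would start with an exact cochain $a=\delta b$ with $b\in C^{k-2}(K;\Ro)$ and $\mu\in\Lambda^kV$, and regroup by the common $(k-1)$-face $J=I\setminus\{j\}$:
\[
\sum_{|I|=k}(\delta b)(I)\,\lambda(I)_\mu\, x_I=\sum_{|J|=k-1}b(J)\sum_{j:\,J\cup\{j\}\in K}[J\cup\{j\}:J]\,\lambda(J\cup\{j\})_\mu\, x_{J\cup\{j\}}.
\]
Define $\mu_J\in V^*$ by $\langle \mu_J,v\rangle=\langle\lambda(J)\wedge v,\mu\rangle$. For $i\in J$ one has $\langle \mu_J,\lambda(i)\rangle=\langle\lambda(J)\wedge\lambda(i),\mu\rangle=0$, so the identity $\theta_{\mu_J}\cdot x_J=\sum_{j:\,J\cup\{j\}\in K}\langle \mu_J,\lambda(j)\rangle\, x_{J\cup\{j\}}$ holds in $\Ro[K]$ (the would‐be non‐squarefree terms $x_i^2 x_{J\setminus\{i\}}$ drop out). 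Hence the regrouped sum equals $\sum_{|J|=k-1}b(J)\,\theta_{\mu_J}x_J\in\Theta$ and vanishes in $\Ro[K]/\Theta$. The forward direction of (2) is already the preceding proposition.

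For the converse, by Schenzel's theorem $\dim(\Ro[K]/\Theta)_{2k}=h'_k$ and by Novik--Swartz $\dim\A^{2k}(\Delta)=h''_k$, so the two relation spaces—the kernels of $\langle x_I:|I|=k\rangle\twoheadrightarrow(\Ro[K]/\Theta)_{2k}$ and $\langle x_I:|I|=k\rangle\twoheadrightarrow\A^{2k}(\Delta)$—have dimensions $f_{k-1}-h'_k$ and $f_{k-1}-h''_k=(f_{k-1}-h'_k)+\binom{n}{k}\br_{k-1}(K)$ respectively. The map $\Phi_k\colon(a,\mu)\mapsto\sum_I a(I)\lambda(I)_\mu x_I$ sends coboundaries in $C^{k-1}(K)$ tensored with $\Lambda^kV$ into the first kernel, and coaugmented cocycles in $C^{k-1}(K)$ tensored with $\Lambda^kV$ into the second. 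The quotient of the second source by the first is $\Hr^{k-1}(K;\Ro)\otimes\Lambda^kV$ of dimension $\binom{n}{k}\br_{k-1}(K)$, matching $(I_{NS})_{2k}\cong\binom{n}{k}\Hr^{k-1}(K;\Ro)$. If both inclusions are saturated the dimensions align simultaneously.

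The main obstacle is precisely this saturation step, i.e.\ showing $\Phi_k$ surjects onto the first relation space (and analogously for the coaugmented case). The approach I would try is: take any $\sum\alpha_I x_I\in \Theta\cap\langle x_I:|I|=k\rangle$, express it as $\sum_u\theta_u q_u$ with $u$ ranging over a basis of $V^*$ and $q_u\in\Ro[K]_{2k-2}$, reduce each $q_u$ modulo $\Theta$ to a squarefree representative $\sum_{|J|=k-1}c_u(J)x_J$, and track the forced cancellation of the non-squarefree terms appearing in $\theta_u q_u$—these cancellations exactly encode the coboundary condition that makes $(c_u(J))_{u,J}$ patch together as $\delta b$ for a genuine $b\in C^{k-2}(K;\Ro)$, at which point the relation is realized as $\Phi_k(\delta b,\cdot)$. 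The bookkeeping here is delicate and is essentially the content of \cite{AyV3}; once it is carried out, the dimension count above guarantees nothing is missed.
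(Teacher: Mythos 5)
The paper itself gives no proof of this proposition: it is quoted from \cite{AyV3} (the bracketed citation in the statement is the attribution), so there is no in-paper argument to compare yours against. Judged on its own, your proposal correctly establishes the \emph{easy} inclusion in both parts. The computation showing $\sum_I(\delta b)(I)\lambda(I)_\mu x_I=\sum_J b(J)\,\theta_{\mu_J}x_J\in\Theta$, with $\mu_J$ the contraction of $\mu$ against $\lambda(J)$, is correct (the coefficients of the non-squarefree terms $x_i^2x_{J\setminus\{i\}}$ vanish because $\langle\mu_J,\lambda(i)\rangle=0$ for $i\in J$, and the terms with $J\cup\{j\}\notin K$ die in $\Ro[K]$), and the forward direction of (2) is indeed the un-numbered proposition preceding this one. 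The dimension bookkeeping via Schenzel ($h'_k$) and Novik--Swartz ($h''_k$) is also the right frame.

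The genuine gap is the converse: that the Minkowski forms \emph{exhaust} the kernel of $\langle x_I:|I|=k\rangle\twoheadrightarrow(\Ro[K]/\Theta)_{2k}$, i.e.\ that $\Phi_k$ restricted to coboundaries surjects onto that kernel. This is the entire content of the proposition, and your proposal defers it to \cite{AyV3} after a sketch that does not close. Concretely: a general element of $\Theta_{2k}$ is a sum of terms $\theta_{j_1}\cdots\theta_{j_r}x_J$ with $r\geqslant 1$ and $|J|=k-r$ (reducing the cofactors $q_u$ to squarefree representatives modulo $\Theta$ pushes the remainder back into $\Theta_{2k}$, so one must iterate and handle products of several linear forms, not just $r=1$); the cancellation of non-squarefree monomials across different $u$'s and different $r$'s must then be shown to force the coefficient system to assemble into coboundaries $\delta b$ paired with decomposable forms $\mu$ --- and it is not evident that only \emph{decomposable} $\mu_J$'s of the special contracted type arise. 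Your dimension count cannot substitute for this, because it only bounds the relation space from below by the images of $\Phi_k$, whose dimensions are themselves unknown without the surjectivity you are trying to prove (the sources, coboundaries or cocycles tensored with $\Lambda^kV$, are much larger than the targets, so no counting of sources helps). The same issue recurs in part (2): even granting part (1), one still needs the induced map $\Hr^{k-1}(K;\Ro)\otimes\Lambda^kV\to (I_{NS})_{2k}$ to be onto, which is again exactly the nontrivial content of the Novik--Swartz-type identification in \cite{AyV3}. So the proposal is a correct verification of the containments plus a consistency check, but not a proof.
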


Recall that $(I_{NS})_{2k}\cong {n\choose k}H^{k-1}(K;\Ro)$. From
Proposition \ref{propMinkManifolds} it can be seen that the
difference between the vector spaces $\Ro[K]/\Theta$ and
$(\Ro[K]/\Theta)/I_{NS}^{<2n}$ arises from the difference between
closed cochains on $K$ and exact cochains. This explains how the
cohomology $H^{k-1}(K)$ appears in the description of $I_{NS}$.
The multiple ${n\choose k}$ comes from the choice of the skew form
$\mu\in \Lambda^kV$ on which we project the Minkowski relation.

\begin{problem}\label{problMinkowskiRels}
Let $\Delta$ be a general complete simplicial multi-fan. Is it
true that $\A^*(\Delta)$ is isomorphic, as a vector space, to the
quotient of $\langle x_I\mid I\in K\rangle$ by linear relations
arising from Minkowski relations? What are these Minkowski
relations?
\end{problem}

%
%

\subsection{Inverse Minkowski theorem}\label{subsecInverseMink}

It is tempting to formulate and prove the inverse Minkowski
theorem for multi-polytopes. First, we need to modify the
statement. The original formulation tells that there exists a
convex polytope with the given normal vectors and the volumes of
facets, but it tells nothing about the combinatorics of the
polytope. We may ask a more specific question, namely

\begin{que}\label{questInvMink}
For a given complete simplicial multi-fan $\Delta$ with the rays
generated by unit vectors $\mathbf{n}_1,\ldots,\mathbf{n}_m$, and
a given $m$-tuple of real numbers $V_1,\ldots,V_m$ satisfying
$\sum V_i\mathbf{n}_i=0$, does there exist a multi-polytope based
on $\Delta$ whose facets have $(n-1)$-volumes $V_1,\ldots,V_m$? If
yes, is it unique?
\end{que}

A simple example shows that the answer, even for the question of
existence, may be negative.

\begin{ex}
Let $\Delta$ be the normal fan of a $3$-dimensional cube. $\Delta$
is an ordinary fan supported by a simplicial complex $K$, which is
the boundary of an octahedron. Let $\{1,2,3,-1,-2,-3\}$ be the set
of vertices of $K$ and $\lambda(\pm1)=(\pm1,0,0)$,
$\lambda(\pm2)=(0,\pm1,0)$, $\lambda(\pm3)=(0,0,\pm1)$ be the
generators of the corresponding rays of $\Delta$ (see Example
\ref{exDiscretePolariz}). The multi-polytopes based on $\Delta$
are the bricks with sides parallel to coordinate axes. Minkowski
relations can be written as $\vol(F_{i})=\vol(F_{-i})$ for
$i=1,2,3$. Let us take the numbers $V_{\pm1}=0$,
$V_{\pm2}=V_{\pm3}=1$. These numbers satisfy Minkowski relations,
but we cannot find a brick whose facets have volumes
$V_{\pm1},V_{\pm2},V_{\pm3}$. Indeed, $V_{\pm1}=0$ implies that
one of the sides of a brick has length $0$, but this would imply
that either $V_{\pm2}=0$ or $V_{\pm3}=0$.
\end{ex}

Nevertheless, the answer to Question \ref{questInvMink} is
completely controlled by the multi-fan algebra. Recall that
$\A^*(\Delta)$ may be interpreted as the algebra of differential
operators $\D$ up to $\Ann(V_\Delta)$. Therefore, for every $a\in
\A^{2j}(\Delta)$, there is a well-defined homogeneous polynomial
$aV_\Delta$ of degree $n-j$. In particular, each element $a\in
\A^{2n-2}(\Delta)$ determines a linear homogeneous polynomial
$aV_{\Delta}=V_1c_1+\cdots+V_mc_m\in \Ro[c_1,\ldots,c_m]_1$. This
linear polynomial is annihilated by
$\theta_j=\sum_{i\in[m]}\lambda_{i,j}\dd_j\in \Ann V_{\Delta}$,
$j=1,\ldots,n$, see Lemma \ref{lemDerivsOfVolPol} or Remark
\ref{remSimplePropMFAlgebra}. This means
\begin{equation}\label{eqMinkMFan}
\sum_{i\in [m]} V_i\lambda(i)=0,
\end{equation}
which can be considered as a Minkowski relation. Let $\Mink$
denote the vector space of all $m$-tuples $(V_1,\ldots,V_m)$
satisfying \eqref{eqMinkMFan}. Thus we obtain a natural
monomorphism $\eta\colon \A^{2n-2}(\Delta)\to \Mink$, $a\mapsto
(V_1,\ldots,V_m)$, where $aV_{\Delta}=V_1c_1+\cdots+V_mc_m$.

\begin{thm}
Let $\Delta$ be a complete simplicial multi-fan with
characteristic function $\lambda$ and assume that $|\lambda(i)|=1$
for each $i\in [m]$. Let $\overline{V}=(V_1,\ldots,V_m)\in \Mink$.
Let $P\in\Poly(\Delta)$ be a multi-polytope and
$\dd_P=c_1\dd_1+\cdots+c_m\dd_m\in \A^2(\Delta)$ be its first
Chern class. Then the polytope $P$ has facet volumes
$V_1,\ldots,V_m$ if and only if
$\eta(\dd_P^{n-1})=(n-1)!\overline{V}$.
\end{thm}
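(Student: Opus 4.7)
The plan is to unpack the definition of $\eta$ and then apply Corollary~\ref{corDerivsDerivs} in the case $|I|=1$. The result is essentially a direct computation, so I will keep the exposition brief.

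First, I would observe that since $V_\Delta$ is homogeneous of degree $n$ and $\dd_P^{n-1}$ is a differential operator of order $n-1$, the polynomial $\dd_P^{n-1}V_\Delta$ is homogeneous of degree $1$ in $c_1,\dots,c_m$. Expand it as
\[
\dd_P^{n-1}V_\Delta = W_1 c_1 + \cdots + W_m c_m,
\]
so that by the definition of $\eta$ we have $\eta(\dd_P^{n-1})=(W_1,\dots,W_m)$. (Here one should also note that $\dd_P^{n-1}V_\Delta$ indeed represents an element of $\A^{2n-2}(\Delta)$, which sits in $\Mink$ via $\eta$ as explained right before the theorem.)

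Next, I would identify the coefficients $W_i$ by differentiating. Since partial derivatives commute, $W_i = \dd_i(\dd_P^{n-1}V_\Delta) = \dd_P^{n-1}\dd_i V_\Delta$. Applying Corollary~\ref{corDerivsDerivs} with $I=\{i\}$ gives
\[
\frac{1}{(n-1)!}\,\dd_P^{n-1}\dd_i V_\Delta \;=\; \frac{\vol(F_i)}{\covol(\{i\})}.
\]
The hypothesis $|\lambda(i)|=1$ means $\covol(\{i\})=\|\lambda(i)\|=1$, so $W_i = (n-1)!\,\vol(F_i)$ for each $i\in[m]$.

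Combining these, $\eta(\dd_P^{n-1}) = (n-1)!\,(\vol(F_1),\dots,\vol(F_m))$. Therefore $\eta(\dd_P^{n-1})=(n-1)!\,\overline V$ holds precisely when $\vol(F_i)=V_i$ for every $i$, i.e.\ when $P$ has facet volumes $V_1,\dots,V_m$. There is no real obstacle here; the only subtlety worth flagging is the unit-length normalization, which is exactly what cancels the $\covol(\{i\})$ denominator from Lemma~\ref{lemDerivsOfVolPol}. Without it one would only recover the facet volumes up to the factors $\covol(\{i\})$.
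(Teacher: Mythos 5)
Your proof is correct and follows essentially the same route as the paper: both identify the coefficients of the linear polynomial $\dd_P^{n-1}V_\Delta$ by applying $\dd_i$ and then invoke Corollary~\ref{corDerivsDerivs} together with the normalization $\covol(\{i\})=|\lambda(i)|=1$. Your version is, if anything, slightly tidier in that it computes $\eta(\dd_P^{n-1})=(n-1)!(\vol(F_1),\dots,\vol(F_m))$ once and reads off both implications simultaneously, where the paper treats the two directions separately.
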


\begin{proof}
Assume that $\eta(\dd_P^{n-1})=(n-1)!\overline{V}$. Then
$\dd_P^{n-1}V_{\Delta}=(n-1)!(V_1c_1+\cdots+V_mc_m)$. Hence
$\dd_i\dd_P^{n-1}V_\Delta=(n-1)!V_i$ for $i\in [m]$. On the other
hand, Corollary~\ref{corDerivsDerivs} implies
\[
\dd_i\dd_P^{n-1}V_\Delta =
(n-1)!\vol(F_i)/\covol(i)=(n-1)!\vol(F_i)/|\lambda(i)|=(n-1)!\vol(F_i).
\]
Thus $V_i=\vol(F_i)$. The other direction is proved similarly.
\end{proof}

Note that $\dim\Mink=m-n$.

\begin{cor}\label{corInverseMink}
Existence in Question \ref{questInvMink} holds for a given
multi-fan $\Delta$ and all $m$-tuples $(V_1,\ldots,V_m)\in\Mink$
if and only if the following two conditions hold:
\begin{enumerate}
\item $\dim \A^2(\Delta)=\dim \A^{2n-2}(\Delta)=m-n$;
\item the power map $\A^2(\Delta)\to \A^{2n-2}(\Delta)$, $\dd\mapsto
\dd^{n-1}$ is surjective.
\end{enumerate}
Uniqueness holds if the power map is bijective.
\end{cor}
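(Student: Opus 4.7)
The plan is to derive the corollary directly from the theorem just proved. By that theorem, a multi-polytope $P\in\Poly(\Delta)$ realizes prescribed facet volumes $(V_1,\ldots,V_m)$ if and only if $\eta(\dd_P^{n-1})=(n-1)!\,\overline V$. Consequently, existence of such a $P$ for every $\overline V\in\Mink$ is equivalent to surjectivity of the composite
\begin{equation*}
\Poly(\Delta)\;\longrightarrow\;\A^2(\Delta)\;\xrightarrow{(-)^{n-1}}\;\A^{2n-2}(\Delta)\;\xrightarrow{\eta}\;\Mink,
\end{equation*}
where the first arrow sends $P\leftrightarrow(c_1,\ldots,c_m)$ to its first Chern class $\dd_P=\sum_i c_i\dd_i$. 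The first arrow is tautologically surjective since $\dd_1,\ldots,\dd_m$ generate $\A^2(\Delta)$ as a vector space. So the question reduces to analyzing when $\eta\circ(-)^{n-1}$ is surjective.

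Next I would pin down $\dim\Mink$. By definition, $\Mink=\ker\bigl(\lambda\colon\Ro^m\to V\bigr)$, and for a complete multi-fan the rays span $V$ (every generic $v\in V$ lies in some $n$-dimensional cone, forcing $\rk\lambda=n$), so $\dim\Mink=m-n$. The map $\eta$ is already known to be injective from the discussion preceding the theorem, so $\eta$ is an isomorphism iff $\dim\A^{2n-2}(\Delta)=m-n$. By Poincare duality in $\A^*(\Delta)$ this is equivalent to $\dim\A^2(\Delta)=m-n$ as well, which is exactly condition (1). Assuming (1), $\eta$ is an isomorphism and surjectivity of the composite becomes surjectivity of the power map $(-)^{n-1}\colon\A^2(\Delta)\to\A^{2n-2}(\Delta)$, which is condition (2). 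Conversely, if either (1) or (2) fails, then the image of the composite is a proper subspace of $\Mink$ and existence fails for some $\overline V\in\Mink$.

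For uniqueness I would argue as follows. Two multi-polytopes $P_1,P_2$ based on $\Delta$ give the same $\overline V$ iff $\eta(\dd_{P_1}^{n-1})=\eta(\dd_{P_2}^{n-1})$, hence by injectivity of $\eta$ iff $\dd_{P_1}^{n-1}=\dd_{P_2}^{n-1}$. On the other hand, the parallel translation of $P$ by $v\in V^*$ changes each support parameter by $\langle v,\lambda(i)\rangle$, and the resulting change in $\dd_P$ is the linear relation $\sum_i\langle v,\lambda(i)\rangle\dd_i=0$ in $\A^2(\Delta)$; conversely, under condition (1) the kernel of $\Ro^m\twoheadrightarrow\A^2(\Delta)$ is precisely the image of $\lambda^\top$, i.e.\ the parallel translation directions. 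Hence two multi-polytopes share the same first Chern class iff they differ by a parallel translation, and uniqueness up to parallel translation is equivalent to injectivity of the power map. Under condition (1) the power map is an endomorphism between vector spaces of the same finite dimension $m-n$, so surjectivity coincides with bijectivity; therefore whenever (1) and (2) hold, the power map is automatically bijective and uniqueness follows.

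The main, and quite minor, technical point is checking $\dim\Mink=m-n$ and translating ``same first Chern class'' into ``parallel translation''; both are linear-algebra consequences of the setup of Section~\ref{secPDAmultifan} together with Remark~\ref{remSimplePropMFAlgebra}, so once the preceding theorem is in hand the corollary is essentially a bookkeeping exercise.
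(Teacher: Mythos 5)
Your argument is the intended one: the paper offers no separate proof of this corollary, treating it as an immediate consequence of the preceding theorem together with the observations that $\eta$ is a monomorphism and $\dim\Mink=m-n$, and your reduction of existence to surjectivity of $\eta\circ(-)^{n-1}$ and of uniqueness-up-to-translation to injectivity of the power map is exactly that bookkeeping. Two small imprecisions: the image of the composite is not a ``subspace'' of $\Mink$ (the power map is nonlinear), though your conclusion survives because when (1) fails the image is \emph{contained} in the proper subspace $\im\eta$, and when (1) holds but (2) fails the composite is simply not onto.

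There is, however, one genuine error in your last paragraph: the claim that under condition (1) ``surjectivity coincides with bijectivity'' for the power map, so that (1) and (2) would automatically force uniqueness. That dimension-count argument is valid only for \emph{linear} maps; the power map $a\mapsto a^{n-1}$ is a homogeneous polynomial map of degree $n-1$, nonlinear as soon as $n\geqslant 3$, and a surjective polynomial self-map of $\Ro^{m-n}$ need not be injective (already $x\mapsto x^{3}-$\,(lower terms) on $\Ro$ shows the phenomenon for general polynomial maps, and nothing in the structure of the power map rules it out). This is precisely why the corollary states bijectivity as a \emph{separate} hypothesis for uniqueness rather than deducing it from (1) and (2), and why the remark following the corollary emphasizes that the power map ``is a complicated object'': if your claim were correct, the final sentence of the corollary would be redundant. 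Deleting that sentence leaves a correct proof of the statement as written, since your preceding argument (injective power map $\Rightarrow$ equal Chern classes $\Rightarrow$ polytopes differ by a parallel translation, using that under (1) the kernel of $\Ro^m\to\A^2(\Delta)$ is exactly $\im\lambda^\top$) establishes the uniqueness clause under the stated bijectivity hypothesis.
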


\begin{rem}
Note that even the condition $\dim \A^2(\Delta)=m-n$ may fail to
hold. As an example, consider a multi-fan having a ghost vertex,
say $1$. As in general, we have $n$ relations
$\theta_1,\ldots,\theta_n$, lying in the kernel of the linear map
$\langle\dd_1,\ldots,\dd_m\rangle\twoheadrightarrow\A^2(\Delta)$.
But the element $\dd_1$, corresponding to the ghost vertex, also
vanishes in $\A^2(\Delta)$. Thus $\dim \A^2(\Delta)<m-n$.

There exist more nontrivial examples. For example, if the
underlying simplicial complex $K$ is disconnected, with connected
components $K_1,\ldots,K_r$ on disjoint vertex sets
$[m_1],\ldots,[m_r]$, $r>1$, then each connected component
contributes at most $m_1-n$ in the total dimension of
$\A^2(\Delta)$ (see the operation of connected sum of Poincare
duality algebras introduced in subsection \ref{subsecConSums}).
Thus in the disconnected case $\dim\A^2(\Delta)\leqslant m-rn$.
Nevertheless, the inverse Minkowski theorem can be refined in an
obvious way: we should consider Minkowski relations on each
connected component.
\end{rem}

\begin{rem}
The power map $\A^2(\Delta)\to \A^{2n-2}(\Delta)$ is a polynomial
map of degree $n-1$ between real vector spaces of equal
dimensions. It is a complicated object which may be interesting on
its own. One of the consequences from Corollary
\ref{corInverseMink} is that the existence in the inverse
Minkowski theorem holds for a multi-fan $\Delta$ whenever $\dim
\A^2(\Delta)=m-n$ and $n$ is even.
\end{rem}

%
%
%
%
%
%

\section{Recognizing volume polynomials and multi-fan
algebras}\label{secRecognizing}

A natural question is: which homogeneous polynomials are the
volume polynomials, and which Poincare duality algebras appear as
$\A^*(\Delta)$? The answer to the second question seems quite
unexpected.

\begin{thm}\label{thmEveryAlgIsMF}
For every Poincare duality algebra $\A^*$ generated in degree $2$
there exists a complete simplicial multi-fan $\Delta$ such that
$\A^*\cong \A^*(\Delta)$.
\end{thm}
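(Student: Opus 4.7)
By Proposition \ref{propPDdescription}, we may realize $\A^\ast \cong \D/\Ann \Psi$ for some non-zero homogeneous polynomial $\Psi \in \Ro[c_1,\dots,c_m]$ of degree $n$. The plan is therefore to produce a complete simplicial multi-fan $\Delta$ whose volume polynomial equals $\Psi$ up to a non-zero scalar, possibly after permitting extra auxiliary rays whose corresponding variables do not appear in $V_\Delta$; any such auxiliary $\dd_i$ then lies in $\Ann V_\Delta$ and vanishes in $\A^\ast(\Delta)$, so this enlargement leaves the algebra undisturbed and Proposition \ref{propPDdescription} yields $\A^\ast(\Delta) \cong \A^\ast$.

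Next, apply the discrete polarization identity from Example \ref{exDiscretePolariz} monomial by monomial. Since every degree-$n$ monomial in $c_1,\dots,c_m$ involves at most $n$ distinct variables, polarization expresses it as a linear combination of $n$-th powers of linear forms in those same variables, yielding a Waring-type decomposition
\[
\Psi = \sum_{s} \gamma_s\, L_s^{\,n},
\]
in which each $L_s$ is supported on an at most $n$-element subset of $\{c_1,\dots,c_m\}$.

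Realize each summand $L_s^{\,n}$ by an elementary multi-fan: by Lemma \ref{lemElementaryMultifan}, the volume polynomial of an elementary multi-fan on $n+1$ vectors in general position is proportional to the $n$-th power of the linear form encoding their (unique up to scale) linear dependence. For each $s$ I would introduce an auxiliary vertex $\ast_s$, pad $\supp L_s$ to a set of size $n+1$, and pick the characteristic vector on $\ast_s$ so that the induced linear relation agrees, up to scale, with $L_s + \beta_s c_{\ast_s}$ for some $\beta_s \neq 0$. Combining the resulting elementary multi-fans additively in $\MultiFans_\lambda$ on a joint vertex set with a common characteristic function, and using that both the weight cycle and the assignment $w \mapsto V_\Delta$ are linear (Theorem \ref{thmVolPoly}), produces a complete simplicial multi-fan $\Delta$ with
\[
V_\Delta \;=\; \sum_s \gamma_s'\,(L_s + \beta_s c_{\ast_s})^n
\]
for suitable constants $\gamma_s'$.

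The main technical step is to ensure that the $c_{\ast_s}$ ultimately disappear, so that $V_\Delta$ equals $\Psi$ as a polynomial in all variables, not merely after the restriction $c_{\ast_s} = 0$. One way is to introduce, for each $\ast_s$, a second elementary multi-fan on the same support but with a different auxiliary direction and opposite weight, tuned so that their contributions cancel along the $c_{\ast_s}$-directions while preserving the projection onto $c_1,\dots,c_m$; equivalently, one selects the unique weight cycle in the enlarged space of cycles that lies simultaneously in the kernel of every $\dd_{\ast_s}$ and projects to $\Psi$. Once this cancellation is effected, $V_\Delta = \Psi$ up to a non-zero scalar, the operators $\dd_{\ast_s}$ lie in $\Ann V_\Delta$, and we conclude $\A^\ast(\Delta) \cong \A^\ast$. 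The hard part is exactly this cancellation step — assembling the elementary pieces into one complete multi-fan whose volume polynomial is literally $\Psi$, rather than merely restricting to $\Psi$ modulo the auxiliary variables — since all other ingredients (Macaulay duality, polarization, and the structure of elementary multi-fans) are direct applications of results already established in the paper.
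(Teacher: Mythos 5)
There is a genuine gap, and it sits exactly where you placed it: the cancellation of the auxiliary variables. The problem is not merely that this step is hard but that, as set up, it cannot be carried out for an arbitrary Macaulay dual $\Psi$ of $\A^*$. A necessary condition for a polynomial to be a volume polynomial is that its degree-two annihilator contain the $n$-dimensional image of $\lambda^\top$ (Remark \ref{remSimplePropMFAnAlgebra} is not the right label --- see Remark \ref{remSimplePropMFAlgebra} and Theorem \ref{thmVolPolyChar}); in particular $\dim\Ann^2 V_\Delta\geqslant n$. If you start from a minimal presentation of $\A^*$ (say $m=\dim\A^2$ variables), then $\Ann^2\Psi=0$ inside $\langle\dd_1,\ldots,\dd_m\rangle$. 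Adjoining auxiliary vertices $\ast_s$ enlarges $\Ann^2\Psi$ only by the span of the $\dd_{\ast_s}$, so any $n$-plane $L\subseteq\Ann^2\Psi$ would force $\langle u,\lambda(i)\rangle=0$ for all $i\in[m]$, i.e.\ $\lambda(i)=0$ on every original vertex --- making them ghost vertices and forcing $V_\Delta$ to be independent of $c_1,\ldots,c_m$. Concretely, your proposed cancellation mechanism also fails pointwise: an elementary multi-fan whose volume polynomial is proportional to $L_s^n$ with $|\supp L_s|\leqslant n$ would need $n+1$ vectors in general position whose unique linear relation has zero coefficients, which is impossible; and the difference of two elementary pieces $(L_s+\beta_s c_{\ast_s})^n-(L_s+\beta'_s c_{\ast'_s})^n$ still depends on both auxiliary variables. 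Working through what "the unique weight cycle in the kernel of every $\dd_{\ast_s}$ projecting to $\Psi$" would require shows it exists precisely when $\Ann^2\Psi$ already contains an $n$-plane supported on the original variables and in general position --- which is the hypothesis you never secured.

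The missing idea, and the heart of the paper's proof, is to choose the presentation of $\A^*$ rather than the decomposition of $\Psi$: take $m=\dim\A^2+n$ elements $x_1,\ldots,x_m\in\A^2$ in general position (every $\dim\A^2$ of them linearly independent), and set $\Psi_{\A}=\frac{1}{n!}\int(c_1x_1+\cdots+c_mx_m)^n$. The $n$ independent linear relations $\sum_i\lambda_{i,j}x_i=0$ then give an $n$-plane in $\Ann^2\Psi_{\A}$ all of whose Pl\"ucker coordinates are non-zero, and Proposition \ref{propVolPolyGenPos} (whose proof is the dimension count of Lemma \ref{lemGenPosit}, i.e.\ the surjectivity of $\Omega_\lambda$ onto $\Ann^n\Theta$, rather than an explicit Waring decomposition) shows $\Psi_{\A}$ is a volume polynomial; Proposition \ref{propPDdescription} then gives $\A^*\cong\A^*(\Delta)$. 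Your polarization-plus-elementary-multi-fans picture is a reasonable constructive shadow of that surjectivity statement (compare Theorem \ref{thmVolPolyFormula}), but it only becomes available after the general-position presentation has been arranged --- at which point the auxiliary vertices and the cancellation step are no longer needed.
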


Recall that the symmetric array of nonnegative integers
$(d_0,d_1,\ldots,d_n)$, $d_j=d_{n-j}$, is called unimodal, if
\[
d_0\leqslant d_1\leqslant\cdots\leqslant d_{\lfloor n/2\rfloor}.
\]

\begin{cor}
There exist multi-fans $\Delta$, for which the array
\[
(\dim \A^0(\Delta),\dim \A^2(\Delta),\ldots, \dim \A^{2n}(\Delta))
\]
is not unimodal.
\end{cor}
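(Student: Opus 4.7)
The corollary is an immediate consequence of Theorem \ref{thmEveryAlgIsMF}: the plan is simply to exhibit a single Poincaré duality algebra $\A^*$, generated in degree $2$, whose graded-dimension vector $(\dim \A^{2j})_{j=0}^n$ fails to be unimodal. Given any such $\A^*$, Theorem \ref{thmEveryAlgIsMF} produces a complete simplicial multi-fan $\Delta$ with $\A^*(\Delta) \cong \A^*$, and the non-unimodal dimension array transports verbatim to $(\dim \A^{2j}(\Delta))$.

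By Proposition \ref{propPDdescription} (Macaulay duality), specifying a Poincaré duality quotient of $\Ro[x_1,\ldots,x_m]$ generated in degree $2$ is the same as specifying a nonzero homogeneous polynomial $\Psi \in \Ro[c_1,\ldots,c_m]$ up to scalar, and the graded dimension $\dim \A^{2j}(\Psi)$ equals the dimension of the span of all $j$-th order partial derivatives of $\Psi$. Hence the task reduces to producing a $\Psi$ whose successive derivative spaces have non-unimodal dimensions. The existence of such polynomials is classical, going back to Stanley: there is an explicit $\Psi$ of degree $4$ in $13$ variables whose first partial derivatives span the full $13$-dimensional space of cubic forms $\dd_i\Psi$, while its second partial derivatives $\dd_i\dd_j\Psi$ span only a $12$-dimensional subspace of quadratic forms. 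Poincaré duality then forces the Hilbert function of $\A^*(\Psi)$ to be $(1,13,12,13,1)$, which is not unimodal because $13 > 12$.

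Applying Theorem \ref{thmEveryAlgIsMF} to this particular $\A^*(\Psi)$ yields a complete simplicial multi-fan $\Delta$ of dimension $n=4$ with $\A^*(\Delta) \cong \A^*(\Psi)$, hence with graded-dimension array $(1,13,12,13,1)$, proving the corollary. The only step requiring content beyond invoking the realization theorem is recalling Stanley's construction of $\Psi$, which is a standard item in the theory of artinian Gorenstein algebras; once it is in hand there is no further obstacle, since Theorem \ref{thmEveryAlgIsMF} does all the geometric work of promoting the algebra to a multi-fan. One could equally well start from any of the many later non-unimodal artinian Gorenstein examples in the literature; the choice of $\Psi$ only affects the values of $n$ and $m$ in the resulting $\Delta$.
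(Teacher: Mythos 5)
Your proposal is correct and follows essentially the same route as the paper: the paper's proof likewise just cites Stanley's non-unimodal Artinian Gorenstein example from \cite{StCE} and applies Theorem \ref{thmEveryAlgIsMF}. The extra detail you supply (the Macaulay-dual form $\Psi$ with Hilbert function $(1,13,12,13,1)$) is the standard realization of that example and is a welcome, but not substantively different, elaboration.
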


\begin{proof}
An example of Poincare duality algebra generated in degree $2$,
for which dimensions of graded components are not unimodal was
given by Stanley in \cite{StCE}. Theorem \ref{thmEveryAlgIsMF}
implies that there exists a multi-fan, which produces this
algebra.
\end{proof}

The construction of the volume polynomial is additive with respect
to weights. Let $\MultiFans_\lambda$ denote the vector space of
all multi-fans with the given characteristic function
$\lambda\colon [m]\to V$. Then we obtain a linear map
\begin{equation}\label{eqOmegaDefin}
\Omega_\lambda\colon \MultiFans_\lambda\to \Ro[c_1,\ldots,c_m]_n,
\end{equation}
which maps $\Delta$ to its volume polynomial $V_\Delta$.

Before giving the proof of Theorem \ref{thmEveryAlgIsMF} we
characterize volume polynomials of general position, in the sense
explained below. For this goal we study the properties of the map
$\Omega_\lambda$.

%
%

\subsection{Characterization of volume
polynomials in general position}\label{subsecCharVolPoly}

There is a necessary condition on $V_\Delta$. If $\lambda$ is a
characteristic function and $\theta_j =
\sum_{i\in[m]}\lambda_{i,j}\dd_i \in
\langle\dd_1,\ldots,\dd_m\rangle$, $j=1,\ldots,n$ are the
corresponding linear forms, then $\theta_jV_{\Delta}=0$, see
Remark \ref{remSimplePropMFAlgebra}. Thus the subspace
\[
\Ann^{2}V_{\Delta}=\{D\in \langle\dd_1,\ldots,\dd_m\rangle\mid
DV_\Delta=0\}
\]
has dimension at least $n$. It happens that in
most situations this is also a sufficient condition for a
polynomial to be a volume polynomial.

At first let us consider the situation of general position to
demonstrate the argument. Assume that all characteristic vectors
$\lambda(1),\ldots,\lambda(m)\in \Ro^n$ are \emph{in general
position}, which means that every $n$ of them are linearly
independent. Given a fixed characteristic function $\lambda\colon
[m]\to V$ in general position, we may pick up any simplicial cycle
$w_{ch}\in Z(\triangle_{[m]}^{(n-1)};\Ro)$, consider a complete
multi-fan $\Delta = (w, \lambda)$ and take its volume polynomial.
This defines a map which we previously denoted
by~$\Omega_\lambda$:
\[
\Omega_\lambda\colon
Z(\triangle_{[m]}^{(n-1)};\Ro)\to\Ro[c_1,\ldots,c_m]_n,
\]
from the $(n-1)$-simplicial cycles on $m$ vertices to homogeneous
polynomials of degree $n$. This map is linear and injective by
Corollary \ref{corOmegaInjective}. As before, let $\theta_j$,
$j=1,\ldots,n$ be the linear differential operators associated
with $\lambda$ (i.e. a basis of the image of the map
$\lambda^\top\colon V^*\to (\Ro^m)^*$). Let
\[
\Ann^{n}\Theta=\{\Psi\in \Ro[c_1,\ldots,c_m]_n\mid
\theta_j\Psi=0\mbox{ for each }j=1,\ldots,n\}
\]
be the vector subspace of polynomials annihilated by differential
operators $\Theta=(\theta_1,\ldots,\theta_n)$. As we have seen, if
$\Delta$ has a characteristic function $\lambda$, then
$V_\Delta\in\Ann^n\Theta$. Thus the image of $\Omega_\lambda$ lies
in $\Ann^n\Theta$.

\begin{lemma}\label{lemGenPosit}
If $\lambda$ is in general position, then $\Omega_\lambda\colon
Z_{n-1}(\triangle_{[m]}^{(n-1)};\Ro)\to \Ann^n\Theta$ is an
isomorphism.
\end{lemma}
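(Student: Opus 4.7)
The strategy is a straightforward dimension count, combined with the injectivity already at hand. First I would note that under the general-position hypothesis every $n$-subset of $[m]$ consists of linearly independent vectors, so the $\ast$-condition is vacuous; consequently each $w_{ch}\in Z_{n-1}(\triangle_{[m]}^{(n-1)};\Ro)$ together with $\lambda$ assembles into a complete multi-fan $(w_{ch},\lambda)$, and $\Omega_\lambda$ is defined on the entire cycle space. Injectivity is exactly Corollary~\ref{corOmegaInjective}, and the containment $\im\Omega_\lambda\subseteq\Ann^n\Theta$ is the identity $\theta_u V_\Delta=0$ from Lemma~\ref{lemDerivsOfVolPol}(1).

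It then suffices to show that both sides have dimension $\binom{m-1}{n}$. For the source, since $C_n(\triangle_{[m]}^{(n-1)};\Ro)=0$ we have $Z_{n-1}(\triangle_{[m]}^{(n-1)};\Ro)=\Hr_{n-1}(\triangle_{[m]}^{(n-1)};\Ro)$; the contractibility of the full simplex $\triangle_{[m]}$ forces all reduced homology of its $(n-1)$-skeleton below the top dimension to vanish, so a reduced Euler-characteristic computation using $f_{k-1}=\binom{m}{k}$ together with the identity $\sum_{j=0}^{n}(-1)^{j}\binom{m}{j}=(-1)^{n}\binom{m-1}{n}$ yields the required count. For the target, general position forces $\theta_1,\ldots,\theta_n$ to be linearly independent elements of $\Ro[\dd_1,\ldots,\dd_m]_2$ (the adjoint $\lambda^{\top}$ is injective), so a linear change of variables puts them in the form $\theta_j=\dd_j$; then $\Ann^n\Theta$ is the space of degree-$n$ polynomials in the remaining variables $c_{n+1},\ldots,c_m$, of dimension $\binom{(m-n)+n-1}{n}=\binom{m-1}{n}$. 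Equivalently, by the Macaulay pairing between $\Ro[\dd_1,\ldots,\dd_m]_n$ and $\Ro[c_1,\ldots,c_m]_n$, one has $\dim\Ann^n\Theta=\dim(\Ro[\dd_1,\ldots,\dd_m]/\Theta)_n$, and the quotient is a polynomial ring in $m-n$ variables.

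With equal finite dimensions and $\Omega_\lambda$ injective, it is an isomorphism. There is no real obstacle here; the only conceptually delicate point is the role of ``general position.'' It is precisely this hypothesis that allows every simplicial cycle in $Z_{n-1}(\triangle_{[m]}^{(n-1)};\Ro)$ to be promoted to a multi-fan, so that the domain of $\Omega_\lambda$ is as large as possible and matches $\Ann^n\Theta$ in dimension. Without it one must restrict to cycles supported on a fixed simplicial complex $K$ compatible with $\lambda$, and the corresponding characterization becomes more subtle — this appears to be the theme of the following subsection.
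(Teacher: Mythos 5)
Your proof is correct and follows essentially the same route as the paper: injectivity from Corollary~\ref{corOmegaInjective}, containment of the image in $\Ann^n\Theta$ from the linear relations, and a dimension count showing both sides equal $\binom{m-1}{n}$ (the source via the reduced Euler characteristic of $\triangle_{[m]}^{(n-1)}$, the target via the change of variables turning the $\theta_j$ into coordinate derivatives). The only cosmetic difference is that you make explicit the closed-form identity $\sum_{j=0}^{n}(-1)^{j}\binom{m}{j}=(-1)^{n}\binom{m-1}{n}$, which the paper leaves implicit.
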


\begin{proof}
Let us compute the dimensions of domain and target. There are no
$n$-simplices in $\triangle_{[m]}^{(n-1)}$, thus
$Z_{n-1}(\triangle_{[m]}^{(n-1)})=H_{n-1}(\triangle_{[m]}^{(n-1)})$.
All Betti numbers of $\triangle_{[m]}^{(n-1)}$ between the top and
the bottom vanish, thus via Euler characteristic we get
\begin{equation}\label{eqCyclesSkeletonSimplex}
\dim H_{n-1}(\triangle_{[m]}^{(n-1)})= {m\choose n}-{m\choose
n-1}+{m\choose n-2} -\cdots+(-1)^n{m\choose 0}.
\end{equation}

Now let us compute $\dim \Ann^n\Theta$. Take a linear change of
variables $c_1,\ldots,c_m\rightsquigarrow c_1',\ldots,c_m'$ such
that $\theta_j$ becomes the partial derivative $\frac{\dd}{\dd
c_j'}$ for $j=1,\ldots,n$. Thus, after the change of variables,
$\Ann^n\Theta$ becomes the set $\{\Psi\in
\Ro[c_1',\ldots,c_m']_n\mid \frac{\dd}{\dd c_j'}\Psi=0,
j=1,\ldots,n\}$ which is the same as
$\Ro[c'_{n+1},\ldots,c'_m]_n$. Thus
$\dim\Ann^n\Theta={m-n+n-1\choose n}={m-1\choose n}$. This number
coincides with \eqref{eqCyclesSkeletonSimplex}.
\end{proof}

Let $G_{m,n}$ denote the Grassmann manifold of all (unoriented)
$n$-planes in $(\Ro^m)^*$. We can introduce the standard
Pl\"{u}cker coordinates on $G_{m,n}$. If
\begin{equation}\label{eqBasisOfL}
\Big\{\theta_j=\sum_{i=1}^m\lambda_{i,j}x_i\Big\}_{j=1,\ldots,n}
\end{equation}
is a basis in $L\in G_{m,n}$, then the Pl\"{u}cker coordinates of
$L$ are all maximal minors of the $m\times n$ matrix
$(\lambda_{i,j})$.

Any $n$-plane $L\in G_{m,n}$ determines an $m$-tuple of vectors in
$V\cong \Ro^n$ as follows: the basis \eqref{eqBasisOfL} determines
the tuple
$\{\lambda(i)=(\lambda_{i,1},\ldots,\lambda_{i,n})\}_{i\in[m]}$.
The base change in $L$ induces the natural action of $\GL(n,\Ro)$
on the $m$-tuples. By abuse of terminology we call
$\lambda\colon[m]\to V$ the characteristic function corresponding
to $L\in G_{m,n}$ although this characteristic function is
determined only up to automorphism of $V$.

\begin{prop}\label{propVolPolyGenPos}
Let $\Psi\in \Ro[c_1,\ldots,c_m]_n$ be a homogeneous polynomial.
Suppose that the vector subspace $\Ann^2\Psi = \{D\in
\langle\dd_1,\ldots,\dd_m\rangle\mid D\Psi=0\}$ contains an
$n$-dimensional subspace $L\in G_{m,n}$ with all Pl\"{u}cker
coordinates non-zero. Then $\Psi$ is a volume polynomial of some
multi-fan.
\end{prop}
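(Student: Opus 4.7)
The plan is to reduce the proposition directly to Lemma \ref{lemGenPosit}, using the hypothesis on Plücker coordinates to guarantee that the required characteristic function is in general position.

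First I would extract a characteristic function from the data. Choose any basis $\theta_1,\ldots,\theta_n$ of $L$, written as $\theta_j=\sum_{i=1}^m\lambda_{i,j}\dd_i$, and define $\lambda\colon[m]\to V\cong\Ro^n$ by $\lambda(i)=(\lambda_{i,1},\ldots,\lambda_{i,n})$. The assumption that every Plücker coordinate of $L$ is non-zero is exactly the statement that every $n\times n$ minor of the matrix $(\lambda_{i,j})$ is non-zero, i.e.\ any $n$ of the vectors $\lambda(1),\ldots,\lambda(m)$ are linearly independent. Hence $\lambda$ is in general position and, in particular, satisfies the $*$-condition with respect to the full $(n-1)$-skeleton $\triangle_{[m]}^{(n-1)}$, so any simplicial $(n-1)$-cycle supported on $[m]$ provides admissible data for a simplicial multi-fan together with $\lambda$.

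Next I would identify $\Psi$ as an element of $\Ann^n\Theta$. By construction $\theta_j\Psi=0$ for $j=1,\ldots,n$ since $L\subset\Ann^2\Psi$, so $\Psi\in\Ann^n\Theta$ where $\Theta=(\theta_1,\ldots,\theta_n)$ is the ideal associated to $\lambda$ in the proof of Lemma \ref{lemGenPosit}. Now apply that lemma: the linear map
\[
\Omega_\lambda\colon Z_{n-1}(\triangle_{[m]}^{(n-1)};\Ro)\to\Ann^n\Theta
\]
is an isomorphism. Consequently there is a unique simplicial cycle $w_{ch}\in Z_{n-1}(\triangle_{[m]}^{(n-1)};\Ro)$ with $\Omega_\lambda(w_{ch})=\Psi$.

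Finally, the pair $(w_{ch},\lambda)$ defines a simplicial multi-fan $\Delta$, and by Proposition \ref{propCompleteCriterion} (together with Definition \ref{defCompleteMfanCycle}) the cycle condition $dw_{ch}=0$ is equivalent to completeness of $\Delta$. Thus $\Delta$ is a complete simplicial multi-fan with $V_\Delta=\Omega_\lambda(w_{ch})=\Psi$, which proves the proposition. There is no real obstacle here: the only delicate point is translating the non-vanishing of Plücker coordinates into genericity of $\lambda$, and everything else is a direct application of Lemma \ref{lemGenPosit}. The genuine work lies in the non-general-position case, which must be handled separately in the sequel.
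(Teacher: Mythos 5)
Your proposal is correct and follows exactly the paper's argument: pick a basis of $L$ to obtain a characteristic function $\lambda$, observe that non-vanishing of the Pl\"{u}cker coordinates puts $\lambda$ in general position, note $\Psi\in\Ann^n\Theta$, and invoke Lemma \ref{lemGenPosit} to realize $\Psi$ as $\Omega_\lambda(w_{ch})$ for a simplicial cycle $w_{ch}$. The only difference is that you spell out the final step (identifying $(w_{ch},\lambda)$ as a complete multi-fan via Proposition \ref{propCompleteCriterion}) which the paper leaves implicit.
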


\begin{proof}
Let us pick a basis
$\{\theta_j=\sum_i\lambda_{i,j}x_i\}_{j=1,\ldots,n}$ in $L$
arbitrarily. Non-vanishing of all Pl\"{u}cker coordinates means
that the corresponding characteristic function $\lambda$ is in
general position. By assumption, $\Psi\in \Ann^n\Theta$. Thus
$\Psi$ is a volume polynomial of some multi-fan based on $\lambda$
according to Lemma \ref{lemGenPosit}.
\end{proof}

%
%

\subsection{Proof of Theorem
\ref{thmEveryAlgIsMF}}\label{subsecCharMFanAlgProof}

Let $\A^*$ be an arbitrary Poincare duality algebra over $\Ro$
generated by $\A^2$. Let $2n$ be the formal dimension of $\A$ and
$p = \dim \A^2$. Take any $p+n$ elements $x_1,\ldots,x_{p+n}\in
\A^2$ in general position (i.e. every $p$ of them are linearly
independent). There are $n$ linear relations on
$x_1,\ldots,x_{p+n}$ in $\A^2$ of the form
$\sum_i\lambda_{i,j}x_i=0$, $j=1,\ldots,n$. Since $x_i$ are in
general position, every maximal minor of the $(p+n)\times
n$-matrix $|\lambda_{i,j}|$ is non-zero.

As in the proof of Proposition \ref{propPDdescription}, consider
the polynomial
\[
\Psi_{\A}(c_1,\ldots,c_m) = \frac{1}{n!}\int
(c_1x_1+\cdots+c_mx_m)^n,
\]
where $\int\colon \A^{2n}\stackrel{\cong}{\rightarrow}\Ro$ is any
isomorphism. The linear differential operator
$\theta_j=\sum_i\lambda_{i,j}\dd_i$ annihilates $\Psi_{\A}$ for
$j=1,\ldots,n$. Indeed:
\[
\Big(\sum_{i=1}^m\lambda_{i,j}\frac{\dd}{\dd
c_i}\Big)\frac{1}{n!}\int (c_1x_1+\cdots+c_mx_m)^n =
\frac{1}{n!}\cdot
n\int\Big(\sum_{i=1}^m\lambda_{i,j}x_i\Big)(c_1x_1+\cdots+c_mx_m)^{n-1}=0.
\]
Since $\theta_j$ are in general position, Proposition
\ref{propVolPolyGenPos} implies that $\Psi_{\A}=V_\Delta$ for some
multi-fan $\Delta$. Therefore the corresponding Poincare duality
algebras $\A^*$ and $\A^*(\Delta)$ are isomorphic by Proposition
\ref{propPDdescription}.

%
%

\subsection{Non-general
position}\label{subsecCharVolPolyNonGeneral}

Now we want to study which polynomials are volume polynomials
without the assumption of general position.

Let $I\subset [m]$ and let $\alpha_I\colon \Ro^I\to\Ro^m$ be the
inclusion of the coordinate subspace. Then $\alpha_I^*\colon
(\Ro^m)^*=\langle\dd_1,\ldots,\dd_m\rangle\to (\Ro^I)^*$ is a
projection map. For a linear subspace $\Pi\subset(\Ro^m)^*$ of
dimension at least $n$ consider the following collection of
subsets of $[m]$:
\[
\dep(\Pi):=\{I\subset[m]\mid |I|\leqslant n \mbox{ and }
\alpha_I^*|_\Pi\colon \Pi\to (\Ro^I)^*\mbox{ is not surjective}\}.
\]
\begin{lemma}\label{lemPlaneGenPos}
Let $\Pi\subset (\Ro^m)^*$ and $\dim \Pi\geqslant n$. Then there
exists a subspace $L\subset \Pi$ such that $\dim L=n$ and
$\dep(L)=\dep(\Pi)$.
\end{lemma}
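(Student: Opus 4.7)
The plan is to prove the lemma by a standard genericity argument in the Grassmannian $\mathrm{Gr}(n,\Pi)$. First observe that one inclusion is automatic: if $I\in\dep(\Pi)$, i.e.\ $\alpha_I^*|_\Pi\colon \Pi\to(\Ro^I)^*$ fails to be surjective, then for any subspace $L\subseteq\Pi$ we have $\alpha_I^*(L)\subseteq\alpha_I^*(\Pi)\subsetneq(\Ro^I)^*$, so $I\in\dep(L)$. Hence $\dep(\Pi)\subseteq\dep(L)$ for every $L\subseteq\Pi$, and the content of the lemma is to produce $L$ of dimension $n$ that realizes the reverse inclusion.

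Reformulate the surjectivity condition as follows. For each $I\subseteq[m]$ with $|I|\leqslant n$ that does \emph{not} belong to $\dep(\Pi)$, set $K_I:=\ker(\alpha_I^*|_\Pi)\subseteq\Pi$, which has dimension $\dim\Pi-|I|$. Then $\alpha_I^*|_L$ is surjective onto $(\Ro^I)^*$ if and only if the composition $L\hookrightarrow\Pi\twoheadrightarrow\Pi/K_I$ is surjective, equivalently $L+K_I=\Pi$. Let
\[
U_I:=\{L\in\mathrm{Gr}(n,\Pi)\mid L+K_I=\Pi\}.
\]
The condition $L+K_I=\Pi$ is cut out by the nonvanishing of a single minor (coming from a basis of $\Pi/K_I$ pulled back through the tautological bundle), so $U_I$ is Zariski open in $\mathrm{Gr}(n,\Pi)$.

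Next check $U_I$ is nonempty. Choose any linear complement $C_I$ of $K_I$ in $\Pi$; then $\dim C_I=|I|\leqslant n\leqslant\dim\Pi$, so $C_I$ extends to an $n$-dimensional subspace $L_0\subseteq\Pi$. For this $L_0$ we have $L_0+K_I\supseteq C_I+K_I=\Pi$, so $L_0\in U_I$. Since $|I|\leqslant n$ and $I\subseteq[m]$, the set of indices $I\notin\dep(\Pi)$ is finite, so the intersection $\bigcap_{I\notin\dep(\Pi)}U_I$ is a finite intersection of nonempty Zariski open subsets of the irreducible variety $\mathrm{Gr}(n,\Pi)$, hence nonempty.

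Any $L$ in this intersection has dimension $n$ and satisfies $\alpha_I^*|_L$ surjective for every $I\notin\dep(\Pi)$; combined with the automatic inclusion from the first paragraph, this gives $\dep(L)=\dep(\Pi)$. The only step requiring any care is verifying nonemptiness of each $U_I$, which we dispatched by an explicit construction; no serious obstacle arises.
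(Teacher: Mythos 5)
Your proof is correct and follows essentially the same route as the paper: a general-position argument showing that a generic $n$-plane in $\Pi$ surjects onto $(\Ro^I)^*$ for every $I\notin\dep(\Pi)$, hence realizes $\dep(L)=\dep(\Pi)$. You simply spell out the genericity step (openness and nonemptiness of each $U_I$ in $\mathrm{Gr}(n,\Pi)$) that the paper leaves as a one-line remark.
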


\begin{proof}
When $\dim \Pi=n$ the statement is trivial so we assume
$\dim\Pi>n$. The proof follows from the general position argument.
If $\varphi\colon \Pi\to U$ is an epimorphism, and $\dim
\Pi>n\geqslant \dim U$, then the set of all $n$-planes in $\Pi$
which map surjectively to $U$ is a complement to a subvariety of
positive codimension inside the set of all $n$-subspaces of $\Pi$.
This argument applied to all maps $\alpha_I^*|_\Pi\colon \Pi\to
(\Ro^I)^*$ proves that any generic $n$-plane $L$ in $\Pi$
satisfies $\dep(L)=\dep(\Pi)$.
\end{proof}

Let $\Psi$ be a homogeneous polynomial of degree $n$ and
$\Ann^2\Psi\subset \langle\dd_1,\ldots,\dd_m\rangle=(\Ro^m)^*$ be
its annihilator subspace.

\begin{thm}\label{thmVolPolyChar}
A homogeneous polynomial $\Psi\in \Ro[c_1,\ldots,c_m]_n$ is a
volume polynomial of some complete simplicial multi-fan if and
only if the following conditions hold:
\begin{enumerate}
\item $\dim \Ann^2\Psi\geqslant n$,
\item $\dd_I\Psi=0$ whenever $I\in \dep(\Ann^2\Psi)$.
\end{enumerate}
\end{thm}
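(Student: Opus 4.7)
The plan is to split into necessity and sufficiency; necessity is almost immediate, while sufficiency requires extending the argument of Proposition \ref{propVolPolyGenPos} beyond the general position case.

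For necessity, suppose $\Psi = V_\Delta$ with $\Delta = (K,w,\lambda)$. The image $L := \im \lambda^\top \subseteq \langle\dd_1,\ldots,\dd_m\rangle$ has dimension $n$ and sits inside $\Ann^2\Psi$ by the linear relations recorded in Remark \ref{remSimplePropMFAlgebra}, which gives (1). Since $L \subseteq \Ann^2\Psi$ one has $\dep(\Ann^2\Psi) \subseteq \dep(L)$. A short dualization shows $I \in \dep(L)$ iff $\{\lambda(i)\}_{i\in I}$ is linearly dependent, so the $*$-condition forces such $I \notin K$, and Lemma \ref{lemDerivsOfVolPol}(2) yields $\dd_I\Psi = 0$; this is (2).

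For sufficiency, given $\Psi$ satisfying (1) and (2), I would first apply Lemma \ref{lemPlaneGenPos} to pick an $n$-dimensional subspace $L \subseteq \Ann^2\Psi$ with $\dep(L) = \dep(\Ann^2\Psi)$. This $L$ determines a characteristic function $\lambda$ (up to $\GL(n)$), and I let $K$ be the simplicial complex of all $\lambda$-independent subsets of $[m]$ of size at most $n$. Motivated by Lemma \ref{lemDerivsOfVolPol}(3), I would set $w(J) := |\det\lambda_J|\,\dd_J\Psi$ for $J \in K\indn$ and $w(J) = 0$ otherwise; this is the only choice compatible with $V_\Delta = \Psi$. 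Two things then must be verified: (a) $w_{ch}$ is a cycle, so that $\Delta = (K, w, \lambda)$ is genuinely a complete multi-fan via Proposition \ref{propCompleteCriterion}; (b) $V_\Delta = \Psi$. For (a), let $J \in K$ with $|J| = n-1$. Since $J \notin \dep(L)$, the map $\alpha_J^*|_L$ is surjective with $1$-dimensional kernel, generated by some $\theta = \sum_{k\notin J} a_k \dd_k \in L$ whose coefficients work out, via the duality between $L$ and $V^*$, to $a_k = \inc{J\cup\{k\}}{J}\,|\det\lambda_{J\cup\{k\}}|$. Applying $\dd_J$ to $\theta\Psi = 0$ and using (2) to kill the terms where $J\cup\{k\} \in \dep(L)$ produces exactly $(dw_{ch})(J) = 0$; for $J \notin K$, no $J\cup\{k\}$ lies in $K\indn$ and the relation is vacuous.

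The main obstacle is (b). By construction the squarefree degree-$n$ derivatives of $\Psi$ and $V_\Delta$ coincide, and both polynomials are annihilated by $L$ and by $\dd_I$ for $I \in \dep(L)$. To upgrade this to equality of polynomials I must show that $\dd^\alpha(\Psi - V_\Delta) = 0$ for every multi-index $\alpha$ of degree $n$, including non-squarefree ones. The plan is to prove that in the quotient ring $\D/(L\cdot\D + \sum_{I \in \dep(L)}\dd_I\cdot\D)$, the degree-$2n$ component is spanned by the squarefree classes $\dd_J$ with $J \in K\indn$. For this, observe that the Stanley--Reisner ideal of $K$ agrees with $\sum_{I\in\dep(L)}\dd_I\cdot\D$ in degrees $\leq 2n$, because the additional minimal non-faces of $K$ have size $\geq n+1$ and therefore contribute only in degrees $> 2n$; hence in degree $2n$ our quotient coincides with $(\Ro[K]/\Theta)^{2n}$, which is spanned by squarefree monomials $x_I$ for $I \in K\indn$ by the standard fact recalled in subsection \ref{subsecSRrings}. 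Dualizing, $\Psi - V_\Delta$ is annihilated by every element of $\D^{2n}$, so all its degree-$n$ partial derivatives vanish, and $\Psi = V_\Delta$.
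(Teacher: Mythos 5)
Your proof is correct, and the necessity half is essentially the paper's. For sufficiency, however, you take a genuinely different route. The paper, after choosing $L\subset\Ann^2\Psi$ with $\dep(L)=\dep(\Ann^2\Psi)$ exactly as you do, never constructs the multi-fan explicitly: it shows the linear map $\Omega_\lambda\colon Z_{n-1}(\Matr_\lambda;\Ro)\to\Ann^n(L,\{\dd_I\}_{I\in\dep(L)})$ is injective (Corollary \ref{corOmegaInjective}) and then proves surjectivity by a dimension count, identifying $\dim\Ann^n(L,\{\dd_I\})$ with $\dim(\Ro[\Matr_\lambda]/(L))_{2n}$ by linear-algebra duality and then with $\dim\Hr_{n-1}(\Matr_\lambda;\Ro)$ via the fact that $\Matr_\lambda$ is a Cohen--Macaulay matroid complex whose socle is concentrated in top degree and has dimension equal to its type. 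You instead read off the only possible weights $w(J)=|\det\lambda_J|\,\dd_J\Psi$ from Lemma \ref{lemDerivsOfVolPol}, verify closedness of $w_{ch}$ by applying $\dd_J$ to the relation $\theta\Psi=0$ for the generator $\theta$ of $\ker(\alpha_J^*|_L)$ (this is exactly the computation in the proof of Theorem \ref{thmIndexMap} run in reverse, and it is correct, including the vacuous case $J\in\dep(L)$), and then prove $\Psi=V_\Delta$ by observing that in degree $2n$ the ideal generated by $L$ and $\{\dd_I\}_{I\in\dep(L)}$ agrees with $\Theta+I_{SR}$ of $\Matr_\lambda$ (the extra minimal non-faces are circuits of size $n+1$, living in degree $2n+2$), so the squarefree $\dd_J$, $J\in\Matr_\lambda\indn$, span the quotient by the spanning fact quoted in subsection \ref{subsecSRrings}, forcing all degree-$n$ derivatives of $\Psi-V_\Delta$ to vanish. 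The trade-off: your argument is more elementary and self-contained (it avoids the Cohen--Macaulayness of matroid complexes and the socle/type identification entirely) and makes the inverse of $\Omega_\lambda$ explicit, while the paper's dimension count delivers in one stroke the statement that $\Omega_\lambda$ is an isomorphism onto $\Ann^n(L,\{\dd_I\}_{I\in\dep(L)})$ — though your construction yields that isomorphism as well, since injectivity is already known.
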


\begin{proof}
The necessity of these conditions is already proved. Indeed, the
first condition follows from the fact that $\Ann^2V_\Delta$
contains the image of $\lambda^\top\colon V^*\to
(\Ro^m)^*=\langle\dd_1,\ldots,\dd_m\rangle$ which has dimension
$n$, see Remark \ref{remSimplePropMFAlgebra}. If $I\in
\dep(\Ann^2V_\Delta)$, then $*$-condition (see subsection
\ref{subsecMFansWeightCharFunc}) implies $I\notin K$, and
therefore $\dd_IV_\Delta=0$ by Lemma \ref{lemDerivsOfVolPol}.

Let us prove sufficiency. By Lemma \ref{lemPlaneGenPos} we may
choose an $n$-dimensional plane $L\subset \Ann^2\Psi$ such that
$\dep(L)=\dep(\Ann^2\Psi)$. Therefore, by assumption, $I\in
\dep(L)$ implies $\dd_I\Psi=0$. Let $\lambda\colon [m]\to V$ be
the characteristic function corresponding to $L\in G_{m,n}$. The
condition $I\in\dep(L)$ is equivalent to the condition that
vectors $\{\lambda(i)\}_{i\in I}$ are linearly dependent.

Consider a simplicial complex $\Matr_\lambda$ determined by the
condition: $\{i_1,\ldots,i_k\}\in \Matr_\lambda$ if and only if
$\lambda(i_1),\ldots,\lambda(i_k)$ are linearly independent. Thus
$\Matr_\lambda=2^{[m]}\setminus \dep(L)$. In a sense, the complex
$\Matr_\lambda$ can be considered as a maximal simplicial complex
on $[m]$ for which $\lambda$ is a characteristic function (this
construction is similar to the universal complexes introduced in
\cite{DJ}).

We have $I\notin \Matr_\lambda$ if and only if $\Theta\to
\langle\dd_i\rangle_{i\in I}$ is not surjective. It is easily seen
that multi-fans having characteristic function $\lambda$ are
encoded by the simplicial $(n-1)$-cycles on $\Matr_\lambda$. As
before, we have a map $\Omega_\lambda\colon
Z_{n-1}(\Matr_\lambda;\Ro)\to \Ro[c_1,\ldots,c_m]_n$ which
associates a volume polynomial $V_\Delta$ with a multi-fan
$\Delta=(w_{ch},\lambda)$ for $w_{ch}\in
Z_{n-1}(\Matr_\lambda;\Ro)$. Let
\[
\Ann^n(L,\{\dd_I\}_{I\in \dep(L)})
\]
denote the subspace of all homogeneous polynomials of degree $n$
which are annihilated by linear differential operators from $L$
and by the products $\dd_I$, $I\in \dep(L)$ ($\Leftrightarrow
I\notin \Matr_\lambda$). We already proved that the image of
$\Omega_\lambda$ lies in $\Ann^n(L,\{\dd_I\}_{I\in \dep(L)})$. We
need to prove that the map
\[
\Omega_\lambda\colon Z_{n-1}(\Matr_\lambda;\Ro)\to
\Ann^n(L,\{\dd_I\}_{I\in \dep(L)})
\]
is surjective. Since $\Omega_\lambda$ is injective, it is enough
to show that dimensions of the two spaces are equal.

First of all notice that $\Matr_\lambda$ is by construction the
underlying simplicial complex of a linear matroid. Hence
$\Matr_\lambda$ is a Cohen--Macaulay complex of dimension $n-1$
(see e.g. \cite{St2}). The number $\dim
Z_{n-1}(\Matr_\lambda;\Ro)=\dim \Hr_{n-1}(\Matr_\lambda;\Ro)$ is
called the type of the Cohen--Macaulay complex $\Matr_\lambda$.

Consider the Stanley--Reisner ring
$\Ro[\Matr_\lambda]=\Ro[\dd_1,\ldots,\dd_m]/(\dd_I\mid I\notin
\Matr_\lambda)$, and its quotient by a linear system of parameters
$L\subset \langle\dd_1,\ldots,\dd_m\rangle$:
\[
\Ro[\Matr_\lambda]/(L)=\Ro[\dd_1,\ldots,\dd_m]/(L,\{\dd_I\}_{I\in
\dep(L)})
\]

\begin{claim}
$\dim (\Ro[\Matr_\lambda]/(L))_{2n}=\dim\Ann^n(L,\{\dd_I\}_{I\in
\dep(L)})$.
\end{claim}

This follows from basic linear algebra. There is a non-degenerate
pairing
\[
\Ro[\dd_1,\ldots,\dd_m]_{2n}\otimes\Ro[c_1,\ldots,c_m]_n\to\Ro.
\]
For any subspace $U\subset \Ro[\dd_1,\ldots,\dd_m]_{2n}$ we have
$\dim\Ro[\dd_1,\ldots,\dd_m]_{2n}/U=\dim U^\bot$. Taking the
degree $2n$ part of the ideal $(L,\{\dd_I\}_{I\in \dep(L)})$ as
$U$ proves the claim.

Now, since $\Matr_\lambda$ is Cohen--Macaulay, the socle of
$\Ro[\Matr_\lambda]/(L)$ coincides with
$(\Ro[\Matr_\lambda]/(L))_{2n}$. On the other hand, the dimension
of the socle coincides with the type of Cohen--Macaulay complex
\cite{St}. We have
\[
\begin{split}
\dim\Ann^n(L,\{\dd_I\}_{I\in \dep(L)}) &=
\dim\soc\Ro[\Matr_\lambda]/(L) \\&= \mbox{ type of
}\Matr_\lambda=\dim \Hr_{n-1}(\Matr_\lambda;\Ro)
\end{split}
\]
which finishes the proof of the theorem.
\end{proof}

\begin{rem}
Lemma \ref{lemGenPosit} describing the general position is a
particular case of Theorem \ref{thmVolPolyChar}. In the case of
general position the matroid complex $\Matr_\lambda$ is just the
$(n-1)$-skeleton of a simplex on $m$ vertices.
\end{rem}

%
%
%

\subsection{Global structure of the set of multi-fans}

Let $G_m^n$ denote the Grassmaniann of all codimension $n$ planes
in $\Ro^m\cong\Ro[c_1,\ldots,c_m]_1$. Obviously, $G_{m,n}$ can be
identified with $G_m^n$ by assigning $L^\bot\subset \Ro^m$ to
$L\subset (\Ro^m)^*$. We have already seen, that characteristic
function $\lambda\colon \Ro^m\to V$ determines the element $L\in
G_{m,n}$ defined as the image of $\lambda^\top\colon V^*\to
(\Ro^m)^*$. The corresponding element of $G_m^n$ is the subspace
$Y=L^\bot=\Ker\lambda\subset \Ro^m$.

Let $S^kY$ denote the $k$-th symmetric power of $Y\in G_m^n$, so
we have $S^kY\subset S^k\Ro^m=\Ro[c_1,\ldots,c_m]_k$. We have
\[
S^kY\subset\Ann^kY^\bot=\{\Psi\in\Ro[c_1,\ldots,c_m]_k\mid
D\Psi=0\mbox{ for any }D\in Y^\bot\},
\]
and both spaces have dimension ${m-n+k-1\choose k}$. This implies
that the vector bundle
\[
\{(Y,\Psi)\in G_m^n\times\Ro[c_1,\ldots,c_m]_k\mid \Psi\in
\Ann^kY^\bot\}\to G_m^n
\]
is $S^k\gamma$, the $k$-th symmetric power of the canonical bundle
$\gamma$ over $G_m^n$. We denote its total space by
$E(S^k\gamma)$.

Consider the set of all characteristic functions in $V$ up to
linear automorphism of $V$:
\[
\CharFunc:=\{\lambda\colon\Ro^m\to V\mid \im\lambda=V\}/\GL(V).
\]
Let $\MultiFans$ denote the set of all complete simplicial
multi-fans on the set $[m]$ (considered up to automorphisms of $V$
again). We have a map $\MultiFans\to \CharFunc$ which maps a
multi-fan to its characteristic function. The fiber of this map
over $\lambda$ is the vector space $\MultiFans_\lambda\cong
Z_{n-1}(\Matr_\lambda;\Ro)$ introduced earlier.

We have a commutative square
\[
\xymatrix{ \MultiFans \ar[r] \ar[d] &
E(S^n\gamma) \ar[d]^{S^n\gamma}\\
\CharFunc\ar[r]^(0.55){\cong}& G_m^n}
\]
The lower map associates a codimension $n$ subspace
$Y=\Ker\lambda$ to a characteristic function $\lambda$. The upper
map associates a volume polynomial to a multi-fan. The upper map
is linear on each fiber. The subset of characteristic functions in
general position maps isomorphically to the subset of $G_m^n$ with
non-zero Pl\"{u}cker coordinates; the fiber over a generic point
maps isomorphically according to Lemma \ref{lemGenPosit}.
Exceptional fibers map injectively and their images are described
by Theorem \ref{thmVolPolyChar}.

%
%
%
%
%
%

\section{Surgery of multi-fans and algebras}\label{secSurgery}

In this section we study the behavior of the dimensions
$d_j=\dim\A^{2j}(\Delta)$ under connected sums and flips of
multi-fans.

%
%

\subsection{Connected sums}\label{subsecConSums}

Recall that $\A^*(\Psi)=\Ro[\dd_1,\ldots,\dd_m]/\Ann(\Psi)$
denotes the Poincare duality algebra associated with the
homogeneous polynomial $\Psi$. For a graded algebra (or a graded
vector space) $A^*=\bigoplus_j A^{2j}$ let $\Hilb(A^*;t) =
\sum_j(\dim A^j) t^j$ denote its Hilbert function. Sometimes it
will be convenient to use the notation
$\dm(A^*):=(d_0,d_1,\ldots,d_n)$, where $d_j=\dim A^{2j}$, and
$\dm(\Delta):=\dm(\A^*(\Delta))$.

Let $\A_1\hash \A_2$ denote \emph{a connected sum of two Poincare
duality algebras} of the same formal dimension $2n$. By
definition, $\A_1\hash \A_2=\A_1\oplus \A_2/\sim$ where $\sim$
identifies $\A_1^0$ with $\A_2^0$ and $\A_1^{2n}$ with
$\A_2^{2n}$. Actually, there is an ambiguity in the choice of the
latter identification, so in fact there exists a 1-dimensional
family of connected sums of the given two algebras. We prefer to
ignore this ambiguity in the following (the statements hold for
any representative in the family).

We have $\dm(A_1\hash A_2)=\dm(A_1)+\dm(A_2)-(1,0,\ldots,0,1)$.

\begin{lemma}\label{lemConSumPolys}
Let $\Psi_1\in \Ro[c_1,\ldots,c_m]_n$, $\Psi_2\in
\Ro[c_1',\ldots,c_{m'}']_n$ be the polynomials in distinct
variables. Then
$\A^*(\Psi_1+\Psi_2)\cong\A^*(\Psi_1)\hash\A^*(\Psi_2)$.
\end{lemma}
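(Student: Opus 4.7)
The plan is to realize $\A^*(\Psi_1+\Psi_2)$ explicitly as a quotient of $\D=\Ro[\dd_1,\ldots,\dd_m,\dd_1',\ldots,\dd_{m'}']=\D^{(1)}\otimes\D^{(2)}$ and verify it satisfies the connected-sum description degree by degree. The pivotal first observation I would extract is that every ``mixed'' monomial lies in $\Ann(\Psi_1+\Psi_2)$: for any $D^{(1)}\in(\D^{(1)})_+$ and $D^{(2)}\in(\D^{(2)})_+$, the product $D^{(1)}D^{(2)}$ already kills $\Psi_1$ (because $\Psi_1$ contains no $c_j'$, so is annihilated by $D^{(2)}$) and symmetrically kills $\Psi_2$, hence annihilates the sum. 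Consequently, in $\A^*(\Psi_1+\Psi_2)$ every product of a positive-degree class from $\D^{(1)}$ with a positive-degree class from $\D^{(2)}$ vanishes, which is exactly the cross-product rule imposed by $\hash$.

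Next I would analyze the annihilator in each degree $k$. Working modulo the mixed monomials, every $D\in\D_k$ is represented by some $D_1+D_2$ with $D_1\in(\D^{(1)})_k$ and $D_2\in(\D^{(2)})_k$; since the variable sets are disjoint, whenever both $D_1,D_2$ have positive degree we have $(D_1+D_2)(\Psi_1+\Psi_2)=D_1\Psi_1+D_2\Psi_2$. For $1\leqslant k\leqslant n-1$ the two summands live in the graded pieces $\Ro[c_1,\ldots,c_m]_{n-k}$ and $\Ro[c_1',\ldots,c_{m'}']_{n-k}$, which intersect only at $0$ in positive degree, so the total vanishes iff $D_1\in\Ann(\Psi_1)$ and $D_2\in\Ann(\Psi_2)$ separately. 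This yields the vector-space identification $\A^{2k}(\Psi_1+\Psi_2)\cong\A^{2k}(\Psi_1)\oplus\A^{2k}(\Psi_2)$ that appears in the middle degrees of the connected sum.

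The top degree $k=n$ is where the nontrivial identification happens. There $D_1\Psi_1$ and $D_2\Psi_2$ are scalars, so $D_1+D_2\in\Ann(\Psi_1+\Psi_2)$ iff $D_1\Psi_1=-D_2\Psi_2$. This precisely identifies the two copies of $\Ro$ coming from $\A^{2n}(\Psi_1)$ and $\A^{2n}(\Psi_2)$ via their integration maps, matching the identification $\A_1^{2n}\sim\A_2^{2n}$ in $\hash$. Combining this with the tautological identification in degree $0$ and the vanishing of cross products from the first step assembles a graded algebra isomorphism $\A^*(\Psi_1+\Psi_2)\cong\A^*(\Psi_1)\hash\A^*(\Psi_2)$.

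There is no genuine obstacle in the proof; the argument is almost entirely bookkeeping. The one point requiring care will be the dichotomy between positive middle degrees (where polynomials in disjoint variable sets are linearly independent, forcing two independent vanishing conditions) and the extremal degrees (where the spaces coincide with $\Ro$ and must be identified). This dichotomy is exactly what produces a connected sum rather than a direct sum, and the ambiguity in the definition of $\hash$ corresponds to rescaling $\Psi_1$ or $\Psi_2$ independently, which does not change the isomorphism class of $\A^*$.
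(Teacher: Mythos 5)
Your argument is correct and follows essentially the same route as the paper's proof, which consists precisely of the two observations you lead with: mixed operators $\dd_i\dd_{i'}'$ annihilate $\Psi_1+\Psi_2$, and pure operators act on the sum as on the individual summands. You have simply carried out in full the degree-by-degree bookkeeping (middle degrees splitting as a direct sum, top degrees being identified) that the paper leaves implicit.
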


\begin{proof}
The mixed differential operators $\dd_i\dd_{i'}'$ vanish on
$\Psi_1+\Psi_2$, while $\dd_I(\Psi_1+\Psi_2)$ equals
$\dd_I(\Psi_1)$ (resp. $\dd_I(\Psi_2)$) if $I\subset [m]$ (resp.
$I\subset[m']$).
\end{proof}

Let $\Delta_1$, $\Delta_2$ be two multi-fans, whose vertex sets
are $[m]=\{1,\ldots,n,n+1,\ldots,m\}$ and
$[\widetilde{m}]=\{1,\ldots,n,\widetilde{n+1},\ldots,\widetilde{m}\}$
respectively, and let $I$ denote the set of common vertices:
$I=\{1,\ldots,n\}$. Assume that the weight of $I$ is non-zero in
both multi-fans and assume that characteristic functions of
$\Delta_1$ and $\Delta_2$ coincide on $I$. Then we may consider
$\Delta_1$ and $\Delta_2$ as multi-fans with vertices
$[m]\cup_I[\widetilde{m}]=\{1,\ldots,n,n+1,\ldots,m,\widetilde{n+1},
\ldots, \widetilde{m}\}$ and a common characteristic function. In
this case we call the cone-wise sum $\Delta_1+\Delta_2$ a
connected sum and denote it by $\Delta_1\hash_I\Delta_2$ or simply
$\Delta_1\hash\Delta_2$.

\begin{rem}
It would be natural to assume that $w_1(I)=-w_2(I)$, so that the
cone spanned by $I$ contracts in the connected sum. This is
consistent with the geometrical understanding how ``the connected
sum'' should look like. However, we do not need this assumption in
the following proposition.
\end{rem}

\begin{prop}\label{propConSum}
For a connected sum $\Delta_1\hash\Delta_2$ there holds
\[
\A^*(\Delta_1\hash\Delta_2)\cong\A^*(\Delta_1)\hash\A^*(\Delta_2),
\]
so that
$\dm(\Delta_1\hash\Delta_2)=\dm(\Delta_1)+\dm(\Delta_2)-(1,0,\ldots,0,1)$.
\end{prop}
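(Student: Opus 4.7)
The strategy is to reduce to Lemma \ref{lemConSumPolys} by separating the shared variables $c_1,\ldots,c_n$ via the linear relations coming from the common characteristic function. Since the volume polynomial is linear in the weight and $\Delta_1\hash\Delta_2$ is the cone-wise sum, we have $V_{\Delta_1\hash\Delta_2} = V_{\Delta_1} + V_{\Delta_2}$ as polynomials in $(c_1,\ldots,c_m,\tilde c_{n+1},\ldots,\tilde c_{\tilde m})$, but the two summands share the variables $c_1,\ldots,c_n$ so Lemma \ref{lemConSumPolys} does not apply directly.

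Because $w_1(I),w_2(I)\neq 0$, the $*$-condition makes $\lambda(1),\ldots,\lambda(n)$ a basis of $V$; let $u_1,\ldots,u_n\in V^*$ be the dual basis. I would first use Lemma~\ref{lemDerivsOfVolPol}(1) (the operators $\theta_{u_k}$ annihilate every volume polynomial built from this common $\lambda|_I$) to perform the affine substitution
\[
c_i\longmapsto c_i-\sum_{k=1}^n\langle u_k,\lambda(i)\rangle c_k,\qquad \tilde c_i\longmapsto \tilde c_i-\sum_{k=1}^n\langle u_k,\lambda(i)\rangle c_k.
\]
This sends $c_j\mapsto 0$ for $j\le n$, produces new coordinates $\hat c_i,\hat{\tilde c}_i$ for $i>n$ which, together with $c_1,\ldots,c_n$, form a linear system of coordinates on $\Ro^{m+\tilde m-n}$, and leaves each of $V_{\Delta_1},V_{\Delta_2},V_{\Delta_1\hash\Delta_2}$ unchanged. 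The combined polynomial thereby becomes
\[
V_{\Delta_1\hash\Delta_2}=\tilde V_1(\hat c_{n+1},\ldots,\hat c_m)+\tilde V_2(\hat{\tilde c}_{n+1},\ldots,\hat{\tilde c}_{\tilde m}),
\]
now written in two \emph{disjoint} sets of variables.

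Next, for any polynomial $\Psi(c_1,\ldots,c_m)$ annihilated by $n$ independent linear operators of the form $\theta_k=\dd_k+\sum_{i>n}a_{i,k}\dd_i$ (which is our situation), I claim $\A^*(\Psi)\cong\A^*(\tilde\Psi)$, where $\tilde\Psi(\hat c_{n+1},\ldots,\hat c_m):=\Psi|_{c_1=\cdots=c_n=0}$. The composition $\Ro[\dd_{n+1},\ldots,\dd_m]\hookrightarrow\Ro[\dd_1,\ldots,\dd_m]\twoheadrightarrow\A^*(\Psi)$ is surjective because the $\theta_k$ eliminate $\dd_1,\ldots,\dd_n$; by a chain-rule computation one has $D\cdot\tilde\Psi(\hat c_{n+1},\ldots,\hat c_m)=(D\tilde\Psi)(\hat c_{n+1},\ldots,\hat c_m)$ for $D\in\Ro[\dd_{n+1},\ldots,\dd_m]$, and linear independence of the affine functions $\hat c_i$ shows that the kernel equals $\Ann(\tilde\Psi)$. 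Applied to $V_{\Delta_i}$ this gives $\A^*(\Delta_i)\cong\A^*(\tilde V_i)$, and applied to $V_{\Delta_1\hash\Delta_2}$ (eliminating $\dd_1,\ldots,\dd_n$ using the joint linear relations on both halves at once) it gives $\A^*(\Delta_1\hash\Delta_2)\cong\A^*(\tilde V_1+\tilde V_2)$.

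Finally, Lemma~\ref{lemConSumPolys} gives $\A^*(\tilde V_1+\tilde V_2)\cong\A^*(\tilde V_1)\hash\A^*(\tilde V_2)\cong\A^*(\Delta_1)\hash\A^*(\Delta_2)$, and the dimension identity for $\dm$ follows immediately from the corresponding identity for connected sums of Poincar\'e duality algebras recalled just before the proposition. The main technical obstacle is the elimination step $\A^*(\Psi)\cong\A^*(\tilde\Psi)$: one has to verify carefully that restricting to $\Ro[\dd_{n+1},\ldots,\dd_m]$ yields a ring homomorphism to $\A^*(\Psi)$ whose image is all of $\A^*(\Psi)$ and whose kernel is exactly $\Ann(\tilde\Psi)$, and moreover that this isomorphism respects the Poincar\'e duality structures on both sides (so that the subsequent application of Lemma~\ref{lemConSumPolys} identifies the correct connected sum).
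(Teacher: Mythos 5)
Your proposal is correct and follows essentially the same route as the paper: both reduce to Lemma \ref{lemConSumPolys} by specializing the shared support parameters $c_1,\ldots,c_n$ to zero, and both justify that this specialization does not change the algebra by using the $n$ linear relations coming from the common characteristic function on the glued vertices (your ``elimination step'' $\A^*(\Psi)\cong\A^*(\Psi|_{c_1=\cdots=c_n=0})$ is exactly the paper's technical lemma). The only difference is cosmetic: you verify the elimination step by an explicit affine change of coordinates and the chain rule, whereas the paper deduces it from the fact that $\{\dd_i\}_{i>n}$ already spans $\A^2$ together with the Macaulay-duality description of the algebra.
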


\begin{proof}
We need a technical lemma
\begin{lemma}
Let $\Delta$ be a multi-fan and $I\subset[m]$ be a vertex set such
that the corresponding characteristic vectors
$\{\lambda(i)\}_{i\in I}$ are linearly independent. Let $V_\Delta$
be the volume polynomial and $V_{\Delta\setminus I}\in \Ro[c_i\mid
i\in [m]\setminus I]_n$ be the homogeneous polynomial obtained by
specializing $c_i=0$ in $V_\Delta$ for each $i\in I$. Then
$\A^*(V_{\Delta\setminus I})\cong \A^*(V_\Delta) (=\A^*(\Delta))$
as Poincare duality algebras.
\end{lemma}

\begin{proof}
Using linear relations $\theta_j=\sum_{i=1}^m\lambda_{i,j}\dd_i=0$
in $\A^2(\Delta)$ we can exclude the variables $\dd_i$ for $i\in
I$. This proves that the set $\{\dd_i\}_{i\in[m]\setminus I}$
spans $\A^2(\Delta)$. Therefore the polynomial
\[
V_{\Delta\setminus
I}=\frac{1}{n!}\int_\Delta\left(\sum\nolimits_{i\in[m]\setminus
I}c_ix_i\right)^n
\]
determines the same Poincare duality algebra as $V_\Delta$.
\end{proof}

By the lemma we have $\A^*(\Delta_1)\cong
\A^*(V_{\Delta_1\setminus I})$ and $\A^*(\Delta_2)\cong
\A^*(V_{\Delta_2\setminus I})$. Polynomials $V_{\Delta_1\setminus
I}$ and $V_{\Delta_2\setminus I}$ have distinct variables, thus
\[
\A^*(V_{\Delta_1\setminus I}+V_{\Delta_2\setminus I})\cong
\A^*(V_{\Delta_1\setminus I})\hash \A^*(V_{\Delta_2\setminus I})
\]
according to Lemma \ref{lemConSumPolys}. It remains to note that
$V_{\Delta_1\setminus I}+V_{\Delta_2\setminus I}$ is the result of
specializing $c_i=0$ for $i\in I$ in the polynomial
$V_{\Delta_1}+V_{\Delta_2}$. Finally, we have
\[
\begin{split}
\A^*(\Delta_1\hash\Delta_2)&=\A^*(V_{\Delta_1\hash\Delta_2})\cong
\A^*(V_{\Delta_1\hash\Delta_2\setminus I})\\
&\cong\A^*( V_{\Delta_1\setminus I}+V_{\Delta_2\setminus
I})\cong\A^*(\Delta_1)\hash\A^*(\Delta_2).
\end{split}
\]
\end{proof}

%
%

\subsection{Flips}\label{subsecFlips}

In this section we assume that $\Delta$ is based on an oriented
pseudomanifold $K$. Our goal is to define a flip in a multi-fan.
Consider separately two situations.

(1) Flips changing the number of vertices. Let us take a maximal
simplex $I\in K$, $|I|=n$. Let $\Flip^1_I(K)$ be a simplicial
complex whose maximal simplices are the same as in $K$ except that
we substitute $I$ by $\cone\dd I$. This operation adds the new
vertex $i$, the apex of the cone. If $\lambda\colon[m]\to \Ro^n$
is a characteristic function on $K$, we extend it to the set
$[m]\sqcup\{i\}$ by adding new value $\lambda(i)$ such that the
result is a characteristic function on $\Flip^1_I(K)$. This
defines an operation on multi-fans which we call \emph{the flip of
type $(1,n)$}.

The inverse operation will be denoted $\Flip^n_i$. It is
applicable to $\Delta$ if $\lk_Ki$ is isomorphic to the boundary
of a simplex and $\lambda(\ver(\lk_Ki))$ is a linearly independent
set. The inverse operation will be called \emph{the flip of type
$(n,1)$}.

(2) Flips preserving the set of vertices. Let $S$ be a subset of
$\ver(K)$ of cardinality $n+1$ such that the induced subcomplex
$K_S$ on the set $S$ is isomorphic to
$\dd\Delta^{p-1}\ast\Delta^{q-1}$ with $p+q=n+1$, $p,q\geqslant
2$. Let $\Flip^p_S(K)$ be the simplicial complex whose maximal
simplices are the same as in $K$ away from $S$, and
$\dd\Delta^{p-1}\ast\Delta^{q-1}$ is replaced by
$\Delta^{p-1}\ast\dd\Delta^{q-1}$. If the set of vectors
$\lambda(S)$ is in general position, then this operation is
defined on multi-fans. We call it \emph{the flip of type $(p,q)$}.
It is easily seen that flips of types $(p,q)$ and $(q,p)$ are
inverse to each other.

Of course $(1,n)$- and $(n,1)$-flips can be viewed as particular
cases of this construction if we allow ghost vertices and formally
set $\dd\Delta^0$ to be such a ghost vertex.

The following proposition tells that dimension vectors of
multi-fan algebras change under the flips in the same way as
$h$-vectors of simplicial complexes.

\begin{thm}\label{thmFlipChange}
Let $\Delta'$ be a multi-fan obtained from $\Delta$ by a
$(p,q)$-flip, $p,q\geqslant 1$. Then
\[
\dm(\Delta')-\dm(\Delta)=(\underbrace{1,\ldots,1}_{q},0,\ldots,0)-
(\underbrace{1,\ldots,1}_{p},0,\ldots,0).
\]
\end{thm}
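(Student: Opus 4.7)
The plan is to lift the combinatorial flip to a scalar shift of the volume polynomial and then track its effect on $\Ann(V_\Delta)$ via Macaulay duality. Since $w_{ch}$ and $w'_{ch}$ are $(n-1)$-cycles differing only on the $n+1$ maximal simplices of $\partial\Delta^n=K_1\cup K_2$, the boundary of the $n$-simplex on vertex set $S$, and this cycle group is one-dimensional, we must have $\Delta'=\Delta+c\cdot\Delta^{el}(\lambda|_S)$ for some $c\neq 0$, where $\Delta^{el}(\lambda|_S)$ denotes the elementary multi-fan on $S$. Additivity of volume polynomials combined with Lemma~\ref{lemElementaryMultifan} gives
\[
V_{\Delta'}=V_\Delta+c'L^n,\qquad L:=\sum_{s\in S}\alpha_sc_s,
\]
where $(\alpha_s)_{s\in S}$ is the relation $\sum_s\alpha_s\lambda(s)=0$; the general-position hypothesis on $\lambda|_S$ makes every $\alpha_s$ nonzero.

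Next, I would pass to the cyclic $\D$-submodules $M_\Delta:=\D V_\Delta$ and $M_{\Delta'}:=\D V_{\Delta'}$ of $\Ro[c_1,\dots,c_m]$, under which $d_k=\dim M_\Delta^{(n-k)}$ and analogously for $\Delta'$; the auxiliary submodule $M_L:=\D L^n$ is one-dimensional in each graded piece, spanned there by $L^{n-k}$. Since $V_{\Delta'}-V_\Delta\in M_L$, we have $M_\Delta+M_L=M_{\Delta'}+M_L$, and inclusion-exclusion in each degree yields
\[
d'_k-d_k=[L^{n-k}\in M_{\Delta'}^{(n-k)}]-[L^{n-k}\in M_\Delta^{(n-k)}],
\]
with $[\cdot]$ the Iverson bracket. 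So every coordinate of $\dm(\Delta')-\dm(\Delta)$ lies in $\{-1,0,1\}$, and the proof reduces to determining when $L^d$ belongs to each module.

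By Macaulay duality, $L^d\in M_\Delta^{(d)}$ is equivalent to $E\big|_{\dd\mapsto\alpha}=0$ for every $E\in\Ann(V_\Delta)_{2d}$. The flip structure forces $A\notin K$, so Lemma~\ref{lemDerivsOfVolPol} puts $\dd_A\in\Ann(V_\Delta)_{2p}$, and $\dd_A|_\alpha=\prod_{a\in A}\alpha_a\neq 0$; multiplying by arbitrary monomials in $\dd_s$ ($s\in S$) gives elements of $\Ann(V_\Delta)$ in every degree $\geq 2p$ with nonzero value at $\alpha$, so $L^d\notin M_\Delta^{(d)}$ whenever $d\geq p$. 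Conversely, for $d<p$ the minimal non-simplex of $K$ lying inside $S$ is $A$ itself, hence no Stanley--Reisner generator of degree below $2p$ has index set inside $S$; combined with $\theta_j|_\alpha=0$ (immediate from the defining relation on the $\alpha_s$), this forces $E|_\alpha=0$ for every $E\in\Ann(V_\Delta)_{2d}$, so $L^d\in M_\Delta^{(d)}$. The mirror argument with $p$ and $q$ swapped yields $L^d\in M_{\Delta'}^{(d)}$ iff $d<q$. Plugging into the dimension formula and using $p+q=n+1$ produces
\[
d'_k-d_k=[k\geq p]-[k\geq q],
\]
which is exactly the $k$-th coordinate of the desired difference.

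The main obstacle is to dispose of hidden annihilator elements beyond those coming from Stanley--Reisner and linear relations when $K$ is a general oriented pseudomanifold. For $K$ a homology sphere, Theorem~\ref{thmStructSphere} identifies $\A^*(\Delta)$ with $\Ro[K]/\Theta$ and the argument above is complete. For homology manifolds one combines Theorem~\ref{thmStructManif} with Proposition~\ref{propMinkManifolds}: the Novik--Swartz socle contributions to $\Ann(V_\Delta)|_\alpha$ and $\Ann(V_{\Delta'})|_\alpha$ must match in low degrees because $\Delta$ and $\Delta'$ share a characteristic function and the flip is a local Pachner move, so the relevant coaugmented cocycles of $K$ and $K'$ restrict identically to the flip region. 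For a fully general pseudomanifold one may reduce to these cases by Zariski-continuity of $\Ann(V_\Delta)$ in the weights, approximating $w_{ch}$ by cycles arising from homology-manifold multi-fans.
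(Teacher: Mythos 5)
Your reduction is set up correctly and the first half is sound: writing $\Delta'=\Delta+c\,\Delta^{el}(\lambda|_S)$, getting $V_{\Delta'}=V_\Delta+c'L^n$ from Lemma \ref{lemElementaryMultifan}, passing to the modules $M_\Delta=\D V_\Delta$, and deriving $d'_k-d_k=[L^{n-k}\in M_{\Delta'}]-[L^{n-k}\in M_\Delta]$ is a legitimate (Macaulay-dual) version of the paper's reduction modulo the ideal $\I_{[m]\setminus S}$. Your argument that $L^d\notin M_\Delta^{(d)}$ for $d\geqslant p$ (via $\dd_{[p]}\in\Ann V_\Delta$ and $\prod_{a\in[p]}\alpha_a\neq 0$) is also correct; it is exactly the paper's bound $\tilde p\leqslant p$. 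The gap is the converse: to get $L^d\in M_\Delta^{(d)}$ for $d<p$ you must show that \emph{every} element of $(\Ann V_\Delta)_{2d}$ vanishes under $\dd_i\mapsto\alpha_i$, but you only check this on the Stanley--Reisner and linear generators. The theorem is stated for $\Delta$ based on an arbitrary oriented pseudomanifold, where $\Ann V_\Delta$ is \emph{not} generated by $I_{SR}+\Theta$ (that identification is Theorem \ref{thmStructSphere} and holds only for homology spheres; already for homology manifolds the Novik--Swartz elements $\sum_I a(I)\lambda(I)_\mu\dd_I$ for closed non-exact cochains $a$ are genuinely new low-degree annihilators, and for general pseudomanifolds no presentation is known --- cf.\ Conjecture \ref{conjRigidity} and Problem \ref{problMinkowskiRels}). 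Your two proposed repairs do not close this: the ``socle contributions match because the flip is local'' claim is an assertion, not an argument (one would have to show $\sum_{I\subseteq S}a(I)\lambda(I)_\mu\prod_{i\in I}\alpha_i=0$ for every closed cochain $a$, which is precisely what is in question), and the ``Zariski-continuity'' reduction fails because $\dim(\Ann V_\Delta)_{2d}$ is only semicontinuous in the weights and a general simplicial cycle need not be approximable by cycles carried by homology manifolds with the same characteristic function.

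The paper closes exactly this gap with a symmetry trick that you have all the ingredients for but do not use. Since $L^d\in M_\Delta^{(d)}$ implies $L^{d-1}\in M_\Delta^{(d-1)}$ (apply $\dd_s/(d\alpha_s)$), the set of such $d$ is $\{0,\dots,\tilde p-1\}$ with $\tilde p\leqslant p$, and likewise $\{0,\dots,\tilde q-1\}$ with $\tilde q\leqslant q$ for $\Delta'$; your Iverson formula then gives $\dm(\Delta')-\dm(\Delta)=(1^{\tilde q},0,\dots)-(1^{\tilde p},0,\dots)$. The left side is a difference of dimension vectors of Poincar\'e duality algebras of the same formal dimension, hence symmetric under $k\mapsto n-k$, and since $\tilde p+\tilde q\leqslant p+q=n+1$ this forces $\tilde p=p$, $\tilde q=q$ --- no control of the full annihilator is ever needed. (The paper instead works with the square of ideals $\I_{[m]\setminus S}\cap\Ann V_\Delta=\I_{[m]\setminus S}\cap\Ann V_{\Delta'}$ and the one-variable quotient $\Ro[t]/(t^{\tilde p})$, but the content is the same.) I recommend you replace your direct proof of the $d<p$ inclusion by this duality argument; also note that the $(1,n)$ and $(n,1)$ cases, where the vertex set changes, need either the ghost-vertex convention or the paper's separate treatment via connected sums (Proposition \ref{propConSum}).
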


\begin{proof}
For $(1,n)$- and $(n,1)$-flips this follows from Proposition
\ref{propConSum} and Lemma \ref{lemElementaryMultifan}, since
$(1,n)$-flip is just the connected sum with an elementary
multi-fan, and $(n,1)$-flip is its inverse.

Now we consider the remaining cases. $(p,q)$-flips with $p\neq 1$
and $q\neq 1$ do not change the vertex set. Let $[m]$ denote the
vertex set of $K$ and $K'$, and $S\subset [m]$ denote the set of
vertices at which the flip is performed. We have $|S|=n+1$ and
$K|_S\cong \dd\Delta^{p-1}\ast\Delta^{q-1}$ and $K'|_S\cong
\Delta^{p-1}\ast\dd\Delta^{q-1}$. Let $[p]$ be the set of vertices
of $\Delta^{p-1}$. Let $\I_{[m]\setminus S}$ denote the ideal in
$\Ro[\dd_1,\ldots,\dd_m]$ generated by $\dd_i$, $(i\in
[m]\setminus S)$.

\begin{claim}
$\I_{[m]\setminus S}\cap \Ann V_{\Delta}=\I_{[m]\setminus S}\cap
\Ann V_{\Delta'}$.
\end{claim}

\begin{proof}
In the group of multi-fans with a given characteristic function we
have a relation $\Delta'=\Delta+T$, where $T$ is an elementary
multi-fan based on the vertex set $S$. Informally, to perform a
flip on a multi-fan is the same as ``to add a boundary of a
simplex'', which cancels the cones from
$\dd\Delta^{p-1}\ast\Delta^{q-1}$ and adds the cones from
$\Delta^{p-1}\ast\dd\Delta^{q-1}$. Therefore
$V_{\Delta'}=V_{\Delta}+V_T$, where $V_T$ is the polynomial which
essentially depends only on the variables $c_i$, $i\in S$. If
$D\in \Ann V_{\Delta}\cap \I_{[m]\setminus S}$ then $D$
annihilates both $V_{\Delta}$ and $V_T$. Thus it annihilates
$V_{\Delta'}=V_\Delta+V_T$ and the claim follows.
\end{proof}

We have a diagram of inclusions of graded ideals in
$\Ro[\dd_1,\ldots,\dd_m]$:

\[
\xymatrix{\Ann V_{\Delta}\ar@{^{(}->}[d]&\I_{[m]\setminus S}\cap
\Ann V_{\Delta}=\I_{[m]\setminus S}\cap \Ann V_{\Delta'}
\ar@{_{(}->}[l]\ar@{^{(}->}[r]\ar@{^{(}->}[d]&\Ann V_{\Delta'}\ar@{^{(}->}[d]\\
\I_{[m]\setminus S}+\Ann V_{\Delta}&\I_{[m]\setminus
S}\ar@{_{(}->}[l]\ar@{^{(}->}[r]&\I_{[m]\setminus S}+\Ann
V_{\Delta'}}
\]

It follows that the quotients of the vertical inclusions are
isomorphic as graded vector spaces. Therefore
\begin{multline}
\dm(\Delta')-\dm(\Delta) = \dm(\Ro[m]/\Ann
V_{\Delta'})-\dm(\Ro[m]/\Ann V_{\Delta}) \\ = \dm
(\Ro[m]/(\I_{[m]\setminus S}+\Ann V_{\Delta'})) - \dm
(\Ro[m]/(\I_{[m]\setminus S}+\Ann V_{\Delta})).
\end{multline}
Since $\I_{[m]\setminus S}$ is the ideal generated by $\dd_i$,
$(i\notin S)$, the ring $\Ro[m]/(\I_{[m]\setminus S}+\Ann
V_{\Delta})$ coincides with some quotient ring $B$ of the
polynomials in variables $\dd_i$, $(i\in S)$, that is
$B=\Ro[S]/Rels$. The linear relations $\theta_j=\sum_{i\in[m]}
\lambda_{i,j}\dd_i$ in $\Ann V_\Delta$ induce the relations
$\sum_{i\in S} \lambda_{i,j}\dd_i$ in $Rels$. Since the values of
$\lambda$ on $S$ are in general position, these induced relations
are linearly independent. We have $n$ linear relations on $n+1$
variables, thus all variables are expressed through a single
variable $t$, and we have $B\cong \Ro[t]/\ca{J}$. Since we are in
a graded situation, and $B$ is a finite dimensional algebra,
$\ca{J}$ is a principal ideal generated by $t^{\tilde{p}}$ for
some $\tilde{p}\geqslant 0$. Hence
\[
\dm(\Ro[m]/(\I_{[m]\setminus S}+\Ann V_{\Delta})) = \dm B =
(\underbrace{1,\ldots,1}_{\tilde{p}},0,\ldots,0).
\]
Now notice that there is a Stanley--Reisner relation $\prod_{i\in
[p]}\dd_i=0$ corresponding to non-simplex $[p]$ in $K$ (recall
that $[p]$ is the set of vertices of $\dd\Delta^{p-1}$ inside
$\dd\Delta^{p-1}\ast\Delta^{q-1}\subset K$). Therefore we have a
relation $t^p=0$ in $B$. This implies $\tilde{p}\leqslant p$.

Applying the same arguments to $\Delta'$, we get
\[
\dm(\Ro[m]/(\I_{[m]\setminus S}+\Ann V_{\Delta'})) =
(\underbrace{1,\ldots,1}_{\tilde{q}},0,\ldots,0),
\]
where $\tilde{q}\leqslant q$. Hence we have
\[
\dm(\Delta')-\dm(\Delta)=(\underbrace{1,\ldots,1}_{\tilde{q}},0,\ldots,0)-
(\underbrace{1,\ldots,1}_{\tilde{p}},0,\ldots,0).
\]
Note that the vector on the left hand side is symmetric. Hence the
vector on the right hand side is symmetric. If at least one
inequality $\tilde{p}\leqslant p$ or $\tilde{q}\leqslant q$ is
strict, the vector at the right is not symmetric. Thus
$\tilde{p}=p$, $\tilde{q}=q$, and the statement follows.
\end{proof}

%
%
%
%
%
%

\section{Cohomology of torus manifolds}\label{secTorusManif}

\subsection{Multi-fans of torus manifolds}\label{subsecTorMfdsDefs}

Recall that \emph{a torus manifold} $X$ is an oriented closed
manifold of dimension $2n$ with an effective action of
$n$-dimensional compact torus $T$ having at least one fixed point,
and prescribed orientations of characteristic submanifolds. Any
torus manifold determines a non-singular multi-fan in the lie
algebra $L(T)\cong\Ro^n$ of the torus as follows (see details in
\cite{HM}).

Let $X_i$, $i\in[m]$ be \emph{the characteristic submanifolds}.
Let $M$ be a connected component of a non-empty intersection
$X_{i_1}\cap\cdots\cap X_{i_k}$ for some $k>0$ and
$\{i_1,\ldots,i_k\}\subset[m]$, and assume that $M$ has at least
one fixed point. Such submanifold will be called \emph{a face
submanifold}. We also assume that the manifold $X$ itself is a
face submanifold corresponding to $k=0$. It easily follows from
the transversality of characteristic submanifolds that $M$ has
codimension $2|k|$. Let $\Sigma_X$ be a poset of all face
submanifolds of $X$ ordered by reversed inclusion. The basic
representation theory of a torus implies that $\Sigma_X$ is a pure
simplicial poset of dimension $n-1$ on the vertex set $[m]$. The
maximal simplices of $\Sigma_X$ correspond to the fixed points of
$X$.

Given orientations of $X$ and $X_i$, $i\in[m]$, each fixed point
obtains a sign. This determines a sign function
$\sigma_X\colon\Sigma_X\indn\to\{-1,+1\}$.

Finally, let $T_i$ denote a circle subgroup fixing $X_i$, for
$i\in[m]$. The orientation of $X_i$ determines the orientation of
the $2$-dimensional normal bundle of $X_i$, which in turn
determines an orientation of $T_i$. Therefore we have a
well-defined primitive element
\[
\lambda_X(i)\in \Hom(S^1,T^n)\cong\Zo^n\subset \Ro^n\cong L(T^n).
\]
This gives a characteristic function $\lambda_X\colon[m]\to
\Ro^n$. These constructions determine a multi-fan
$\Delta_X:=(\Sigma_X,\sigma_X,\lambda_X)$ associated with a torus
manifold $X$. This multi-fan is non-singular and complete
\cite{HM}.

As described in subsection \ref{subsecMFansWeightCharFunc}, we may
turn the data ``simplicial poset + sign function'' into the data
``simplicial complex + weight function''. Let $K_X$ and $w_X$
denote the simplicial complex and the weight function
corresponding to $\Delta_X$.

In the following we assume that each non-empty intersection of
characteristic submanifolds is connected and contains at least one
fixed point. The assumption implies, in particular, that
$\Sigma_X$ is a simplicial complex, and therefore $K_X=\Sigma_X$
and the weight function $w_X$ coincides with $\sigma_X$.

\subsection{Face subalgebra in cohomology}\label{subsecFaceSubAlg}

Let $X$ be a torus manifold and $\Delta_X$ be the corresponding
multi-fan. Let $\F^*(X)\subset H^*(X;\Ro)$ be the vector subspace
spanned by the cohomology classes Poincare dual to face
submanifolds. Since the intersection of two face submanifolds is
either a face submanifold or empty, $\F^*(X)$ is a subalgebra.
This subalgebra is multiplicatively generated by the classes of
characteristic submanifolds.

The simplices of $K_X=\Sigma_X$ correspond to face submanifolds,
and there exists a ring homomorphism $\Ro[K_X]\to H^*_T(X)$
defined as follows: the element $x_I=\prod_{i\in I}x_i\in
\Ro[K_X]_{2|I|}$ corresponding to the simplex $I\in K_X$ maps to
the equivariant cohomology class dual to the face submanifold
$X_I=\bigcap_{i\in I}X_i$. There is a natural homomorphism
$H^*_T(X)\to H^*(X)$ induced by the inclusion of a fiber in the
Borel fibration $X\hookrightarrow X\times_T ET
\stackrel{\pi}{\rightarrow} BT$. We have a commutative square of
algebra homomorphisms
\begin{equation}\label{eqCohomSquareRings}
\begin{CD}
H_T^*(\Delta_X) @>>> H^*(\Delta_X)\\
@VVV @VVV \\
H^*_T(X) @>>> H^*(X).
\end{CD}
\end{equation}
Recall from the end of subsection \ref{subsecSRrings} that
$H_T^*(\Delta_X)$ denotes the Stanley--Reisner ring of the
underlying simplicial complex $K_X$, $H^*(\Delta_X)$ is its
quotient by the linear system of parameters, so the upper
horizontal arrow in the diagram \eqref{eqCohomSquareRings} is the
natural quotient homomorphism. By definition, $\F^*(X)$ is the
image of the right vertical map. Hence we have an epimorphism of
algebras $H^*(\Delta_X)\twoheadrightarrow \F^*(X)$.

\begin{thm}\label{thmFacePartCohom}
There exists a well-defined epimorphism of algebras
$\F^*(X)\twoheadrightarrow \A^*(\Delta_X)$.
\end{thm}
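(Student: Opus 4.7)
The plan is to realize $\A^*(\Delta_X)$ as the Poincar\'e dualization of $\F^*(X)$ and read off the desired epimorphism as the canonical projection to this dualization. From diagram \eqref{eqCohomSquareRings} we already have the epimorphism $\rho\colon H^*(\Delta_X)\twoheadrightarrow \F^*(X)$, and by construction $\A^*(\Delta_X)\cong \PD(H^*(\Delta_X),\int_{\Delta_X})$. Equip $\F^*(X)$ with the integration $\int_\F\colon \F^{2n}(X)\to \Ro$ obtained by restricting the fundamental-class evaluation $\int_X\colon H^{2n}(X)\to \Ro$. If I can verify that $\rho$ is consistent with these integrations, i.e.\ $\int_\F\circ \rho|_{H^{2n}(\Delta_X)} = \int_{\Delta_X}$, then Lemma \ref{lemPDinduced} yields an isomorphism $\PD(H^*(\Delta_X),\int_{\Delta_X})\cong \PD(\F^*(X),\int_\F)$, and the composition
\[
\F^*(X)\twoheadrightarrow \PD(\F^*(X),\int_\F)\;\stackrel{\cong}{\longrightarrow}\; \A^*(\Delta_X)
\]
is the sought epimorphism.

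For the consistency check I would use that $H^{2n}(\Delta_X)$ is linearly spanned by the monomials $x_J$ with $J\in K_X\indn$ (as recalled after Corollary~\ref{corOmegaInjective}), so it suffices to compare $\int_\F(\rho(x_J))$ and $\int_{\Delta_X}(x_J)$ for each such $J$. On the multi-fan side, formula \eqref{eqDefIndexMap} simplifies to
\[
\int_{\Delta_X}(x_J) \;=\; \pi_!^{\Delta_X}(x_J) \;=\; \frac{w(J)}{|\det\lambda_J|} \;=\; \sigma_X(J),
\]
since only the term with $I=J$ survives, $w(J)=\sigma_X(J)=\pm1$, and $|\det\lambda_J|=1$ because $\Delta_X$ is non-singular. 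On the torus manifold side, the characteristic submanifolds intersect transversally, so $\rho(x_J)=\prod_{j\in J}[X_j] = [X_J]$ is the Poincar\'e dual of the fixed point $X_J$, and by the very definition of the sign function in subsection \ref{subsecTorMfdsDefs} one has $\int_X[X_J] = \sigma_X(J)$. Thus the two integrations agree on the spanning set and hence on all of $H^{2n}(\Delta_X)$.

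The main obstacle is precisely this orientation/sign bookkeeping in the consistency step: one has to match the sign arising from the determinant of characteristic vectors appearing in the index map with the sign assigned to a fixed point by comparing the tangential orientation induced from $X_1,\ldots,X_m$ to the chosen orientation of $X$. Once this identification is in hand, everything else is purely formal and follows from Lemma \ref{lemPDinduced} together with the description of $\A^*(\Delta_X)$ as a Poincar\'e dualization. No further hypotheses on $X$ beyond those standing in subsection~\ref{subsecTorMfdsDefs} (in particular, connectedness of intersections of characteristic submanifolds, so that $\Sigma_X=K_X$ is a simplicial complex and $w_X=\sigma_X$) are needed.
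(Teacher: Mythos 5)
Your proposal is correct and follows essentially the same route as the paper: the paper's proof also equips $\F^*(X)$ with the integration $\int_X$, asserts compatibility of the epimorphism $H^*(\Delta_X)\twoheadrightarrow\F^*(X)$ with the two integration maps (citing \cite{HM} for this), and then applies Lemma \ref{lemPDinduced} to identify $\PD(\F^*(X),\int_X)$ with $\A^*(\Delta_X)$. The only difference is that you spell out the sign-matching verification on the spanning monomials $x_J$, $J\in K_X\indn$, which the paper delegates to \cite{HM}; your computation of $\int_{\Delta_X}(x_J)=w(J)/|\det\lambda_J|=\sigma_X(J)$ is consistent with the proof of Lemma \ref{lemDerivsOfVolPol}.
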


\begin{proof}
The epimorphism $H^*(\Delta_X)\to \F^*(X)$ is compatible with the
integration maps $\int_{\Delta_X}\colon H^{2n}(\Delta_X)\to \Ro$
(the multi-fan integration) and $\int_{X}\colon \F^{2n}(X)\to \Ro$
(integration over the manifold $X$), see \cite{HM}. Lemma
\ref{lemPDinduced} implies that the induced map
$\PD(H^*(\Delta_X),\int_{\Delta_X})\to \PD(\F^*(X),\int_{X})$ is
an isomorphism. Thus we have a natural epimorphism
$\F^*(X)\twoheadrightarrow \PD(\F^*(X),\int_{X})\cong
\PD(H^*(\Delta_X),\int_{\Delta_X})=\A^*(\Delta_X)$.
\end{proof}

Therefore the part of the cohomology ring generated by
characteristic submanifolds is clamped between two algebras
defined combinatorially:
\begin{equation}\label{eqCohomologyDiagr}
\xymatrix{ H^*(\Delta_X)\ar@{->>}[r]
& \F^*(X)\ar@{->>}[r] \ar@{^{(}->}[d] & \A^*(\Delta_X) \\
& H^*(X) & }
\end{equation}

\begin{cor}
Betti numbers of a torus manifold $X$ are bounded below by the
dimensions of graded components of $\A^*(\Delta_X)$.
\end{cor}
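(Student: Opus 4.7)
The plan is to read off the inequality directly from the diagram \eqref{eqCohomologyDiagr} displayed just before the corollary. The algebra $\A^*(\Delta_X)$ is concentrated in even degrees, so the statement to prove is that $\dim_\Ro H^{2j}(X;\Ro) \geqslant \dim_\Ro \A^{2j}(\Delta_X)$ for each $j$, while the odd Betti number bound is vacuous.

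First I would invoke Theorem \ref{thmFacePartCohom}, which provides a surjective ring homomorphism $\F^*(X) \twoheadrightarrow \A^*(\Delta_X)$. Since this map is degree-preserving and surjective, it gives $\dim_\Ro \F^{2j}(X) \geqslant \dim_\Ro \A^{2j}(\Delta_X)$ in every degree. Next I would use the fact that $\F^*(X)$ is by definition a graded vector subspace of $H^*(X;\Ro)$, so $\dim_\Ro H^{2j}(X;\Ro) \geqslant \dim_\Ro \F^{2j}(X)$. Chaining these two inequalities yields the claim.

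There is essentially no obstacle here: every ingredient is supplied by the previous theorem and the diagram \eqref{eqCohomologyDiagr}. The only thing worth stating explicitly is that the $2j$-th Betti number of $X$ dominates the corresponding dimension of $\A^*(\Delta_X)$, and that the odd components of $\A^*(\Delta_X)$ vanish (so no bound is needed there). This is a one-line corollary of Theorem \ref{thmFacePartCohom} together with the observation that $\F^*(X)$ injects into $H^*(X;\Ro)$.
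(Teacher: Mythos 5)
Your argument is exactly the one the paper intends: the corollary is read off from diagram \eqref{eqCohomologyDiagr}, combining the surjection $\F^*(X)\twoheadrightarrow \A^*(\Delta_X)$ from Theorem \ref{thmFacePartCohom} with the inclusion $\F^*(X)\hookrightarrow H^*(X;\Ro)$. Correct and identical in approach.
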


\begin{rem}
For complete smooth toric varieties and for quasitoric manifolds
all arrows in the diagram above are isomorphisms as follows from
Danilov--Jurkiewicz and Davis--Januszkiewicz \cite{DJ} theorems
respectively. If $K_X$ is a sphere, both horizontal maps are
isomorphisms, so the face part of cohomology is completely
determined by a multi-fan, while the vertical map may be
non-trivial. As an example in which such phenomena occur, take an
equivariant connected sum of a (quasi)toric manifold with a
manifold on which the torus acts freely and whose orbit space has
nontrivial homology. Finally, there exist many examples of torus
manifolds for which all arrows in \eqref{eqCohomologyDiagr} are
nontrivial \cite{AyV3}.
\end{rem}

Recall that $H^*(\Delta_X)$ is linearly generated by the
square-free monomials $x_I$ corresponding to simplices $I\in K$.
There may exist certain linear relations on these elements coming
from Minkowski relations: $\sum_{I:|I|=k}a(I)\lambda(I)_\mu x_I$,
where $\mu\in \Lambda^kV$ and $a$ is a function on
$(k-1)$-simplices of $K$.

\begin{conj}
As a vector space, $\F^{2k}(X)$ is generated by the elements
$\{x_I\}_{I\in K}$ subject to the Minkowski relations
$\sum_{I:|I|=k}a(I)\lambda(I)_\mu x_I=0$, where $\mu$ runs over
$\Lambda^kV$ and $a$ runs over all functions such that the element
\[
\sum_{I:|I|=k}a(I)[X_I/T]
\]
bounds in $C_{n-k}(X/T;\Ro)$.
\end{conj}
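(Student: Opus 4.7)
The plan is to describe the kernel $J^{2k}$ of the epimorphism $H^{2k}(\Delta_X) \twoheadrightarrow \F^{2k}(X)$ coming from diagram \eqref{eqCohomologyDiagr}, and then identify $J^{2k}$ modulo the exact-cochain Minkowski relations with the space of bounding chains in $X/T$. First I would invoke Proposition \ref{propMinkManifolds}(1) (assuming $K_X$ is an oriented homology manifold, which is the relevant case for torus manifolds) to present $H^{2k}(\Delta_X)$ by generators $\{x_I : |I|=k\}$ modulo the Minkowski relations $\sum a(I)\lambda(I)_\mu x_I = 0$ for exact cochains $a = \delta b$. This reduces the problem to identifying the ``new'' relations appearing in $\F^{2k}(X)$ but not already in $H^{2k}(\Delta_X)$.

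The geometric input is that $x_I \in \F^{2k}(X)$ is the Poincaré dual of the face submanifold $X_I$, so a relation $\sum a(I) x_I = 0$ in $\F^{2k}(X)$ amounts to $\sum a(I)[X_I] = 0$ in $H_{2(n-k)}(X;\Ro)$. The parameter $\mu \in \Lambda^k V$ should be read as an $\binom{n}{k}$-fold multiplicity coming from equivariant Poincaré dualization: each class $[X_I]$ is the non-equivariant image of a family of equivariant classes, and the pairing $\lambda(I)_\mu = \langle \bigwedge_{i \in I}\lambda(i),\mu\rangle$ selects a projection of the equivariant Thom class in a chosen normal direction. Passage from $X$ to $X/T$ is effected by the orbit map $\pi\colon X \to X/T$, which (over the locus where $T$ acts freely) is a principal $T$-bundle; in real coefficients the fibers contribute trivially outside their top degree, so cycles of the form $\sum a(I)[X_I]$ should bound in $X$ precisely when the projected chain $\sum a(I)[X_I/T]$ bounds in $X/T$, with the $\mu$-parameter controlling the specific bounding relation in $H^*(X)$.

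Concretely, for the sufficiency direction, given a bounding relation $\sum a(I)[X_I/T] = \partial c$ in $C_{n-k+1}(X/T;\Ro)$, I would lift $c$ to a $T$-invariant chain $\widetilde c \subset X$ via the preimage construction, and then apply equivariant integration combined with Lemma~\ref{lemDerivsOfVolPol} (together with Theorem~\ref{thmMinkowGener}) to extract, for each $\mu \in \Lambda^k V$, a relation $\sum a(I)\lambda(I)_\mu x_I = 0$ in $\F^*(X)$. For necessity, one starts with a relation $\sum a(I) x_I = 0$ in $\F^{2k}(X)$, uses Poincaré duality on $X$ to realize it by a bounding chain, and then pushes that chain down to $X/T$ choosing a $T$-invariant representative; here the parametrization by $\mu$ is exploited to recover all $\binom{n}{k}$ independent relations attached to a single bounding cochain $a$.

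The main obstacle will be the singularity of $X/T$, which precludes a direct application of Poincaré duality or a clean pushforward on chains. When $K_X$ is a manifold one expects $X/T$ to carry the structure of a manifold with corners, in which case stratified (or PL) chains should suffice; in the general pseudomanifold setting one likely needs to replace $C_*(X/T;\Ro)$ by an appropriately defined equivariant or stratified chain complex, and to verify that the boundary operation there matches the cochain boundary on $K_X$ up to the cohomology of $X/T$. A secondary delicate point is the dimension count: one must show that the $\mu$-parametrized relations produce the full kernel and no more, which amounts to proving that the natural map from $(\text{bounding cochains})\otimes\Lambda^k V$ into $J^{2k}$ is surjective and has the expected kernel (the exact-cochain relations paired with $\Lambda^k V$). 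Controlling this count rigorously is where the topology of the orbit space enters most sharply, and is the essential content beyond Proposition~\ref{propMinkManifolds}.
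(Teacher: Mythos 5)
The statement you are addressing is stated in the paper as a \emph{conjecture}: the paper gives no proof of it, and only records that the special case of an oriented manifold with locally standard torus action, trivial free part and acyclic proper faces of the orbit space is established in the external reference [AyV3]. So there is no proof in the paper to compare your argument against; your text has to stand on its own, and as written it is a research plan rather than a proof --- nearly every step is phrased as ``I would,'' ``should,'' or ``one expects,'' and the points you yourself flag as obstacles are precisely the content that makes the statement a conjecture.

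Concretely, three gaps remain open in your proposal. First, the sufficiency direction proves less than is needed: the paper's own remark after the conjecture already observes that a bounding chain $\sum a(I)[X_I/T]$ gives a cycle of multi-polytopes and hence, via Theorem \ref{thmMinkowGener} and Lemma \ref{lemDerivsOfVolPol}, the vanishing of $\sum a(I)\lambda(I)_\mu x_I$ in $\A^*(\Delta_X)$; but $\A^*(\Delta_X)$ is a further quotient of $\F^*(X)$, so this does not yield a relation in $\F^{2k}(X)$. To get one you must show that $\sum a(I)\lambda(I)_\mu [X_I]$ bounds in $H_{2(n-k)}(X;\Ro)$ itself, a statement about the topology of $X$ that the volume-polynomial machinery cannot detect; your ``lift $c$ to a $T$-invariant chain and apply equivariant integration'' is exactly the missing construction. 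Second, your appeal to Proposition \ref{propMinkManifolds} requires $K_X$ to be an oriented homology manifold, which is not among the hypotheses of the conjecture (the paper only assumes connected intersections of characteristic submanifolds with fixed points, so $K_X$ is merely a simplicial complex); for general $K_X$ even the presentation of $H^{2k}(\Delta_X)$ by Minkowski relations is itself open (Problem \ref{problMinkowskiRels}). Third, the asserted equivalence ``$\sum a(I)[X_I]$ bounds in $X$ if and only if $\sum a(I)[X_I/T]$ bounds in $X/T$'' is the crux of the necessity direction and is exactly where the singular strata of the orbit map can cause failure; you name this difficulty but do not resolve it. Until these are supplied, the proposal does not establish the conjecture.
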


This question is closely related to Problem
\ref{problMinkowskiRels}. It can be seen that whenever
$\sum_{I:|I|=k}a(I)[X_I/T]$ bounds in $C_{n-k}(X/T;\Ro)$, the
element $\sum_{I:|I|=k}a(I)F_I$ is a cycle of multi-polytopes,
therefore $\sum_{I:|I|=k}a(I)\lambda(I)_\mu x_I$ vanishes in
$\A^*(\Delta_X)$. However, there may be cycles of multi-polytopes
such that the corresponding elements $\sum_{I:|I|=k}a(I)[X_I/T]$
do not bound in the orbit space. This observation represents the
fact that the right arrow in \eqref{eqCohomologyDiagr} can be
nontrivial.

If $X$ is an oriented manifold with locally standard torus action,
having trivial free part and acyclic proper faces of the orbit
space, the conjecture is proved in \cite{AyV3}. Informally, this
situation corresponds to the case when the underlying simplicial
complex of $\Delta_X$ is an oriented homology manifold.

\end{document}